\documentclass[12pt,reqno,a4paper]{amsart}
\usepackage{graphicx}
\usepackage{amssymb,amscd,textcomp,mathrsfs}
\usepackage{hyperref}

\DeclareSymbolFont{yhlargesymbols}{OMX}{yhex}{m}{n} 
\DeclareMathAccent{\yhwidehat}{\mathord}{yhlargesymbols}{"62}

\begin{document}

\let\kappa=\varkappa
\let\eps=\varepsilon
\let\phi=\varphi

\def\Z{\mathbb Z}
\def\R{\mathbb R}
\def\C{\mathbb C}
\def\Q{\mathbb Q}

\def\OO{\mathcal O}
\def\AA{\mathcal A}

\def\RR{\mathscr R}
\def\PP{\mathscr P}

\def\CP{\C{\mathrm P}}
\def\RP{\R{\mathrm P}}
\def\conj{\overline}
\def\Beta{\mathrm{B}}
\def\p{\partial}
\def\wh{\widehat}
\def\wt{\widetilde}

\renewcommand{\Im}{\mathop{\mathrm{Im}}\nolimits}
\renewcommand{\Re}{\mathop{\mathrm{Re}}\nolimits}

\newcommand{\codim}{\mathop{\mathrm{codim}}\nolimits}
\newcommand{\id}{\mathop{\mathrm{id}}\nolimits}
\newcommand{\Aut}{\mathop{\mathrm{Aut}}\nolimits}
\newcommand{\supp}{\mathop{\mathrm{supp}}\nolimits}
\newcommand{\dist}{\mathop{\mathrm{dist}}\nolimits}
\newcommand{\rh}[1]{\mathop{\RR\text{-}\mathrm{hull}}\nolimits(#1)}
\newcommand{\ph}[1]{\mathop{P(\conj{\Omega})\text{-}\mathrm{hull}}\nolimits(#1)}
\newcommand{\phbig}[1]{\mathop{P(\conj{\Omega})\text{-}\mathrm{hull}}\nolimits\bigl( #1 \bigr)}

\newtheorem{mainthm}{Theorem}
\renewcommand{\themainthm}{{\Alph{mainthm}}}

\newtheorem{thm}{Theorem}[section]
\newtheorem{lem}[thm]{Lemma}
\newtheorem{prop}[thm]{Proposition}
\newtheorem{cor}[thm]{Corollary}

\theoremstyle{definition}
\newtheorem{df}[thm]{Definition}
\newtheorem{rem}[thm]{Remark}
\newtheorem{exm}[thm]{Example}

\numberwithin{equation}{section}

\title{Hulls and boundaries in $\mathbb{C}^n$}

\author[Nemirovski]{Stefan Nemirovski}
\address{%
Bergische Universit\"at Wuppertal, Germany;\newline 
\strut\hspace{9 true pt} Steklov Mathematical Institute, Moscow, Russia}
\email{stefan.nemirovski@uni-wuppertal.de}

\author[Reppekus]{Josias Reppekus}
\address{Universiteit van Amsterdam, the Netherlands}
\email{j.j.reppekus@uva.nl}

\author[Shcherbina]{Nikolay Shcherbina$^\dagger$}
\thanks{$^\dagger$ Nikolay Shcherbina passed away on February 7, 2026.}
\address{Bergische Universit\"at Wuppertal, Germany}

\begin{abstract}
The paper is concerned with the boundary behaviour of polynomially
and rationally convex hulls in pseudoconvex domains in $\mathbb{C}^n$. 
As an application, it is shown that every connected polynomially 
or rationally convex compact set with $C^1$ boundary is isotopic
to the closure of a smoothly bounded strictly pseudoconvex
domain that is also polynomially or rationally convex.
\end{abstract}


\maketitle


\section*{Introduction}

Let $K\Subset\C^n$ be a compact subset. Its {\it polynomially convex hull\/}
is the compact subset
$$
\wh{K} = \bigl\{ z\in \C^n \; \bigl| \bigr.\; |P(z)|\le \max\limits_K |P| 
\text{ for all }P \in\C [z_1,\ldots,z_n] \bigr\}
$$
and its {\it rationally convex hull\/} is the compact subset
$$
\rh{K} = 
\left\{ 
z\in \C^n 
\left|\, 
|R(z)|\le \max\limits_K |R| \,
\begin{array}{c}
\text{for all } R\in\C (z_1,\ldots,z_n)\\ 
\text{that are regular on } K
\end{array}  
\right. \!\!
\right\}.
$$
The subset $K$ is called {\it polynomially convex\/} if $K=\wh{K}$ and {\it rationally convex\/} if $K=\rh{K}$.
These notions are crucial for polynomial and rational approximation because of the Oka--Weil theorem~\cite[\S 1.5]{Sto}
according to which a function holomorphic on a neighbourhood of a polynomially or rationally convex subset~$K$
can be approximated by polynomials or, respectively, rational functions uniformly on~$K$.

In recent decades, powerful methods originating in symplectic and contact geometry 
have been applied to the study of polynomially and rationally
convex sets, see~\cite{CE1, CE3, For}. 
They are particularly effective in complex dimension~$2$, 
producing results already at the level 
of the underlying differential topology.
However, these techniques require the compact set $K$ 
to be the closure of a strictly pseudoconvex domain, 
which entails, in particular, 
that its boundary is at least $C^2$-smooth. 
On the other hand, the questions about
the differential topology of $K$ make perfect sense if 
its boundary is only $C^1$ and, perhaps more importantly, 
if it is not assumed strictly pseudoconvex. 
Part of the motivation for the present paper 
has been to bridge this gap.

Our main result in this direction, Theorem~\ref{spsc-approx}, 
shows that every polynomially or, respectively, rationally 
convex connected compact set $K\Subset\C^n$ with $C^1$~boundary 
is ambiently isotopic to the closure 
of a smoothly bounded strictly pseudoconvex domain that
is polynomially or, respectively, rationally convex. In fact,
there is a deformation class of such domains canonically
associated to $K$ by Remark~\ref{spsc-connected}, which 
should in principle allow one to define various symplectic and
contact invariants for~$K$ itself. As a straightforward 
application of this theorem, we can strengthen the results
from~\cite{NS} and~\cite{MT}. 

\begin{mainthm}
There exist bounded strictly pseudoconvex domains in $\C^2$ 
such that their closures are not $C^1$ diffeomorphic 
to rationally convex compact subsets in $\C^2$. 

There exist contractible bounded strictly pseudoconvex 
domains in~$\C^2$ 
such that their closures are not $C^1$ diffeomorphic
to polynomially convex compact subsets in~$\C^2$.
\end{mainthm}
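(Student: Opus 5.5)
We derive Theorem~A from the main approximation result, Theorem~\ref{spsc-approx}, combined with the obstructions established in~\cite{NS} and~\cite{MT}. Those papers construct bounded strictly pseudoconvex domains $D\subset\C^2$ such that no strictly pseudoconvex domain diffeomorphic to $\conj{D}$ has rationally convex closure; in the contractible case, no such domain has polynomially convex closure. In~\cite{NS} the obstruction for rational convexity comes from the fact that a rationally convex strictly pseudoconvex domain in $\C^2$ carries a Stein (Weinstein) structure whose contact boundary admits an exact filling with a specific embedding into $\C^2$, so that symplectic and topological restrictions apply (for instance, constraints on embedded Lagrangian or symplectic surfaces, or on the intersection form). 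The polynomially convex contractible examples of~\cite{MT} rely instead on Mazur-type or cork-type manifolds whose Stein structures cannot be realized as polynomially convex domains. The plan is to quote these results in the form of statements about smoothly bounded strictly pseudoconvex rationally or polynomially convex domains, together with their smooth diffeomorphism types.

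The argument proceeds by contradiction. Take such a domain $D$ from~\cite{NS} (respectively a contractible one from~\cite{MT}), and suppose $f\colon \conj{D}\to K\subset\C^2$ is a $C^1$ diffeomorphism onto a rationally (respectively polynomially) convex compact set $K$. Then $K$ is a connected compact set with $C^1$ boundary, since $f$ is a $C^1$ diffeomorphism of manifolds with boundary and $K$ is therefore the closure of a domain with $C^1$ boundary. By Theorem~\ref{spsc-approx}, $K$ is ambiently isotopic to $\conj{\Omega}$, where $\Omega$ is a smoothly bounded strictly pseudoconvex domain that is rationally (respectively polynomially) convex.

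It remains to see that $\conj{\Omega}$ is smoothly diffeomorphic to $\conj{D}$. The ambient isotopy supplies a homeomorphism $K\to\conj{\Omega}$; if it is $C^1$ we get a $C^1$ diffeomorphism $\conj{D}\to\conj{\Omega}$. The results of~\cite{NS,MT} require smooth diffeomorphism, so we upgrade $C^1$ to $C^\infty$: $C^1$-diffeomorphic smooth compact manifolds with boundary are $C^\infty$-diffeomorphic, because $C^\infty$ maps are dense in $C^1$ maps and diffeomorphisms form an open set in the $C^1$ topology, which also respects the boundary. Applying the obstruction to $\Omega$ then gives the contradiction. In the contractible case, contractibility is a topological invariant and transfers to $\Omega$ automatically.

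The main obstacle is ensuring that the isotopy furnished by Theorem~\ref{spsc-approx} has enough regularity, namely that it is an ambient isotopy by $C^1$ diffeomorphisms, so that $\conj{\Omega}$ is $C^1$-diffeomorphic to $K$. I would first check the statement of Theorem~\ref{spsc-approx}. If it only provides a topological isotopy, the fallback is the construction itself: $\Omega$ is obtained by approximating $K$ from inside or outside along a transverse vector field to the $C^1$ boundary, so $\partial\Omega$ is a $C^1$-small graph over $\partial K$. This yields a $C^1$ diffeomorphism $K\cong\conj{\Omega}$ via the collar. A second concern is whether~\cite{NS,MT} genuinely state their obstructions up to diffeomorphism rather than only for Stein structures in a given homotopy class. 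If they only give the latter, we additionally need the canonical deformation class of Remark~\ref{spsc-connected} to control the induced Stein structure on $\Omega$.
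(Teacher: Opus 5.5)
Your skeleton matches the paper's: argue by contradiction, observe that the image $K$ of a $C^1$ diffeomorphism is the closure of a domain with $C^1$ boundary, apply Theorem~\ref{spsc-approx}, and feed the resulting strictly pseudoconvex domain into a known smooth obstruction. Theorem~\ref{spsc-approx} already gives an ambient $C^1$ isotopy, so your regularity worry and the collar fallback are unnecessary, and your $C^1$-to-$C^\infty$ upgrade is fine. The second assertion goes through this way. The obstruction \cite[Theorem~1.7]{MT} says a contractible strictly pseudoconvex domain with polynomially convex closure is diffeomorphic to the standard $4$-ball, and this is insensitive to orientation. You do, however, need the examples themselves from elsewhere, namely the contractible strictly pseudoconvex domains of \cite[Example~3.2]{G}; \cite{MT} supplies only the obstruction. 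Likewise, the strictly pseudoconvex realisations needed in the first part come from \cite[\S 10.5]{For}, not from \cite{NS}.

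The genuine gap is in the black box you quote for the first assertion. You assume \cite{NS} provides $D$ such that no strictly pseudoconvex domain diffeomorphic to $\conj{D}$ has rationally convex closure. In fact \cite[Theorem~1.6]{NS} only excludes \emph{orientation preserving} diffeomorphisms, see \cite[Remark~1.5]{NS}. The statement, by contrast, forbids every $C^1$ diffeomorphism, and your composite $\conj{D}\to K\to\conj{\Omega}$ may reverse the complex orientations; in that case \cite{NS} gives no contradiction. Your proposed fallback of controlling the Stein deformation class via Remark~\ref{spsc-connected} does not touch this issue.

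The paper handles orientation through its choice of examples:
\begin{itemize}
\item For the unit cotangent disc bundle of the Klein bottle, there is an orientation reversing self-diffeomorphism $r$. If $f$ reverses orientation, then $f\circ r$ preserves it, and \cite{NS} applies.
\item For the disc bundle over $\RP^2$ with orientable total space and Euler number $-2$, an orientation reversing diffeomorphism onto $\conj{\Omega}$ would make the Euler number $+2$ bundle orientably diffeomorphic to the Stein domain $\Omega$. This is impossible by \cite[\S 5]{N}.
\end{itemize}
Without an argument of this kind, the first assertion is not established.
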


Note that examples of this type are impossible in higher
dimensions by the work of Cieliebak and Eliashberg~\cite{CE2}.
 
\begin{proof}
The cotangent unit disc bundle of the Klein bottle 
and the disc bundle over $\R\mathrm{P}^2$
with orientable total space and Euler number~$-2$ 
are diffeomorphic to closures 
of strictly pseudoconvex domains in~$\C^2$, see~\cite[\S 10.5]{For},
that cannot be rationally convex by~\cite[Theorem 1.6]{NS}. 
(The results in~\cite{NS} concern 
orientation preserving diffeomorphisms, see~\cite[Remark 1.5]{NS}. 
However, the cotangent disc bundle of the Klein
bottle has an orientation reversing diffeomorphism and 
the open disc bundle over $\R\mathrm{P}^2$ with orientable total space
and Euler number $+2$ is not orientably diffeomorphic 
to a Stein domain at all~\cite[\S 5]{N}.) 
Hence, Theorem~\ref{spsc-approx} implies that both manifolds
are not $C^1$ diffeomorphic to rationally convex subsets.

For the second part, consider the contractible $4$-manifolds 
admitting strictly pseudoconvex embeddings in $\C^2$
from~\cite[Example~3.2]{G}. These manifolds are not
diffeomorphic to polynomially convex closures of 
strictly pseudoconvex domains because then they
would have to be diffeomorphic to the standard $4$-ball 
by~\cite[Theorem~1.7]{MT}. 
Applying Theorem~\ref{spsc-approx} completes the proof.
\end{proof}

The first step in the proof of Theorem~\ref{spsc-approx}
is a construction of strictly pseudoconvex isotopic
deformations for any pseudoconvex domain with $C^1$ boundary. 
The classical approach based on the plurisubharmonicity
of the $-\log$ of the boundary distance function 
is not directly applicable because this function
may be non-differentiable 
on a dense subset of the domain~\cite{Sa}.
Nevertheless, it retains enough
regularity at the boundary for a Richberg-type 
regularisation scheme to produce a $C^1$ defining function
with strictly pseudoconvex level sets, 
see Proposition~\ref{def-func} 
and the discussion preceding it in~\S\ref{subsec-spsc}.

The second step is to show that if one starts
from a domain with polynomially or, respectively, 
rationally convex closure, then also the closures of 
the strictly pseudoconvex subdomains produced
in the first step will be polynomially 
or, respectively, rationally convex. 
The key tool is the local maximum principle
for plurisubharmonic functions on the `added' hulls. 
The well-known case of polynomially convex hulls
is recalled in~\S\ref{subsec-lmp-psh-huls}.
The exact analogue for rationally convex hulls
has not explicitly appeared in the literature before,
so two different proofs are provided in~\S\ref{subsec-lmp-rat-hulls}.

If the boundary of the original domain is $C^k$ with $k\ge 2$,
then the entire proof can be streamlined by using the defining functions
of Diederich and Forn\ae{}ss~\cite{DF}, see Remark~\ref{c-infty}.

The argument in the convexity part 
of the proof of Theorem~\ref{spsc-approx}
suggests the following general problem:

\smallskip
\noindent
{\it Suppose that the polynomially or rationally convex hull
of a compact set in $\C^n$ is contained in the closure of
a pseudoconvex domain. What can be said about its intersection
with the boundary of the domain\/}?

\smallskip
It seems likely that this question may admit a reasonable answer 
for domains that are locally subgraphs of continuous real functions
in some holomorphic coordinates.
(They are called `domains with simple boundary' 
in our Definition~\ref{simple-boundary}.)
One approach to it is to 
combine the local maximum principle
with a Mergelyan-type approximation theorem
for plurisubharmonic functions and the existence 
of bounded plurisubharmonic exhaustion functions 
(also known as {\it hyperconvexity\/}).
The Mergelyan-type theorem holds
for arbitrary domains with simple boundary, 
see \cite{AHP} and Proposition~\ref{psh-Mergelyan}.
However, hyperconvexity has only been
established for bounded pseudoconvex domains 
with H\"older regular simple boundary~\cite{C}.

In dimension~$2$, the reliance on hyperconvexity  
can be avoided thanks to the results of the third author~\cite{S1},
which are recalled in Section~\ref{sec-p-hulls-graphs-c2}.
This technique allows us to prove Theorem~\ref{foliation} 
asserting that for hulls contained in closures of
pseudoconvex domains with simple boundary in~$\C^2$
the intersection of the added hull with the boundary 
of the domain is a union of analytic discs.
It was shown by Catlin~\cite{Ca} that such a statement 
cannot hold in higher dimensions, see Remark~\ref{Catlin}.

Altogether, we have the following result combining
Theorem~\ref{mainhull-n} and Theorem~\ref{mainhull}.
(The case $n=1$ is elementary.)

\begin{mainthm}
\label{mainthmB}
Let $\Omega\subset\C^n$ be a pseudoconvex domain with simple boundary
and $K\Subset\Omega$ a compact subset. If $n\ge 3$, then assume in addition
that $\Omega$ is hyperconvex {\rm (}e.g., bounded with H\"older boundary\/{\rm )}.
If the polynomially or rationally convex hull of $K$ is contained in
the closure of $\Omega$, then it is contained in $\Omega$,
i.e., does not intersect $\p\Omega$.
\end{mainthm}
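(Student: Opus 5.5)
We argue by contradiction, using the local maximum principle on the added hull. Let $\wh{K}$ denote either the polynomially or the rationally convex hull of $K$, assume $\wh{K}\subset\conj{\Omega}$, and set $A=\wh{K}\setminus K$. The local maximum principle (\S\ref{subsec-lmp-psh-huls} for polynomial hulls, \S\ref{subsec-lmp-rat-hulls} for rational hulls) says the following. If $u$ is plurisubharmonic on a neighbourhood of $\conj{U}\cap\wh{K}$, where $U$ is open and $\conj{U}\cap K=\emptyset$, then
$$\max_{\conj{U}\cap\wh{K}}u=\max_{\p U\cap\wh{K}}u .$$
The goal is to show that $L=\wh{K}\cap\p\Omega$ is empty. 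We suppose $L\neq\emptyset$ and look for a contradiction. Since $K\Subset\Omega$, the set $L$ is compact and disjoint from $K$.

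\textbf{Case $n\ge 3$.} Here we use hyperconvexity together with Mergelyan approximation. Take a continuous plurisubharmonic exhaustion $\rho\colon\Omega\to[-1,0)$. Then $\rho$ extends continuously by $0$ to $\p\Omega$, at least near the compact set $\wh{K}$. By Proposition~\ref{psh-Mergelyan}, which is the Mergelyan-type theorem for domains with simple boundary, $\rho$ can be approximated uniformly on $\conj{\Omega}$ (locally near $\wh{K}$ suffices) by functions $\rho_j$ plurisubharmonic on neighbourhoods of $\conj{\Omega}$. Choose $c<0$ with $\max_K\rho<c$, and set $U=\{\rho>c'\}$ for some $c'\in(\max_K\rho,0)$, intersected with a large ball. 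Then $\conj{U}\cap K=\emptyset$, and $\p U\cap\wh{K}$ lies in $\{\rho=c'\}$, where $\rho\le c'<0$. However, $\max_{\conj U\cap\wh{K}}\rho=0$, attained on $L$. Apply the local maximum principle to $\rho_j$ and let $j\to\infty$: this gives $0\le c'$, a contradiction. Some care is needed because $\p U$ must avoid $\p\Omega$. Near $L$ this is fine, since $\rho$ is continuous up to the boundary, so the level set $\{\rho=c'\}$ stays inside $\Omega$ on a neighbourhood of $\wh{K}$. The step I expect to need the most checking is that the approximation from Proposition~\ref{psh-Mergelyan} is uniform up to $\p\Omega$ and applies to $\rho$ extended by~$0$. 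This is where the simple boundary assumption enters: the local subgraph structure lets us translate $\rho$ slightly in the direction transversal to the boundary and patch the translates.

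\textbf{Case $n=2$.} Here hyperconvexity is not available, so we use Theorem~\ref{foliation} instead: $L=A\cap\p\Omega$ is a union of analytic discs contained in $\p\Omega$, lying in the Levi-flat parts coming from \cite{S1}. Pick a point $p\in L$ at which some locally defined plurisubharmonic function, for instance built from the graph structure, attains a strict maximum over $\wh{K}$. For example, in local holomorphic coordinates where $\Omega=\{v<h(z,u)\}$, one can take a strictly plurisubharmonic perturbation $v-h$ plus $\eps|z-p|^2$. The discs through $p$ together with the local maximum principle then push the hull outward, and we reach a contradiction by comparing with a peak function at an extreme point of $L$. Concretely, let $p\in L$ maximise $|z|^2$ over $L$. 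The analytic disc in $L$ through $p$ then gives a subharmonic function $|z|^2$ on that disc with an interior maximum. This forces the disc to be constant, which contradicts the fact that discs are non-degenerate. If instead the foliation result also allows isolated points, we apply the local maximum principle at such a point $p$ to $|z-p|^2$ restricted to a small ball, which yields the contradiction directly.

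The main obstacle I anticipate is the approximation step for $n\ge3$, namely obtaining plurisubharmonic approximants on neighbourhoods of $\wh{K}\cap\conj{\Omega}$ that are uniformly close to the continuous extension of the exhaustion $\rho$. For $n=2$ the main obstacle is justifying that every point of $L$ lies on a nontrivial analytic disc in $L$, which is exactly the content of Theorem~\ref{foliation}.
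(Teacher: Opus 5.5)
Your argument is correct and is essentially the paper's. For $n\ge 3$ you fuse Proposition~\ref{hulls-in-pOhulls} (Mergelyan approximation plus the local maximum principle) with the hyperconvex-exhaustion step of Proposition~\ref{boundary-hull-n}. For $n=2$ your concrete argument (maximise $|z|^2$ over $L$ and use the disc in $L$ through the maximum point given by Theorem~\ref{foliation}) is the one used in Proposition~\ref{boundary-hull} and in the alternative proof of Theorem~\ref{mainhull}.

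Two tidy-ups. Define $U$ as the open set $V_j-\{z\in\Omega \mid \rho(z)\le c'\}$, where $V_j\supset\conj{\Omega}$ is where $\rho_j$ is psh, so that $\p\Omega\subset U$ and $\p U\cap\wh{K}\subseteq\{\rho=c'\}$; alternatively take $U=V_j-K$ as the paper does. Also drop the peak-function and isolated-point remarks, which are not needed because Theorem~\ref{foliation} already provides a non-degenerate disc in $L$ through every point.
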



A somewhat stronger version of this theorem in terms 
of convex hulls with respect to 
plurisubharmonic functions in $\Omega$
continuous on $\conj{\Omega}$ is given 
by Proposition~\ref{boundary-hull-n} for $n\ge 3$
and by Proposition~\ref{boundary-hull} for $n=2$.
Hulls of this type are recalled and discussed 
in~\S\ref{subsec-lmp-psh-huls}.
For domains with simple boundary, they are
related to rationally and polynomially convex hulls 
by Proposition~\ref{hulls-in-pOhulls}
and Corollary~\ref{phull=pOhull}.

\subsection*{Conventions}
`Smooth' without qualifiers means~$C^\infty$.
A `domain' is a connected open subset. The terms
`Stein', `pseudoconvex', and `domain
of holomorphy' for domains in $\C^n$ are used as synonyms  
by the solution of the Levi problem~\cite{GR,H}.
`Plurisubharmonic' is abbreviated to psh and `subharmonic' to sh. 
By a psh function on a non-open subset we understand a psh function
on a (non-specified) open neighbourhood of that subset. 

\subsection*{Acknowledgements}
The authors thank Alberto Abbondandolo and Tobias Harz
for helpful discussions and Alexander Izzo for 
insightful comments and the short proof of Lemma~\ref{izzo-boundary}.

\section{Local maximum principles}

\subsection{Local maximum principle on psh hulls}
\label{subsec-lmp-psh-huls}
It is well-known that psh functions satisfy the following
local maximum principle on polynomially convex hulls.

\begin{thm}
\label{max-phull}
Let $K\Subset\C^n$ be compact and $U\subset\C^n$ be open with $U\cap K=\varnothing$.
For every psh function $\phi$ on $\conj U$ and $p\in U\cap\wh{K}$ we have
\begin{equation}
\label{max-phull-ineq}
\phi(p) \le \max\limits_{\p U\mathop{\cap}\wh{K}} \phi.
\end{equation}
\end{thm}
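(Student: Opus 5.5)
The plan is to argue by contradiction, gluing $\phi$ to a global psh function on $\C^n$. We use the classical fact (Hörmander, Stout) that the polynomially convex hull coincides with the hull with respect to psh functions on $\C^n$:
$$\wh K=\{z\in\C^n : v(z)\le \max_K v \text{ for all } v \text{ psh on } \C^n\}.$$
So suppose $p\in U\cap\wh K$ and $\phi(p)>m:=\max_{\p U\cap\wh K}\phi$. We may assume $U$ is bounded, replacing $U$ by $U\cap B$ for a large ball $B$; this changes neither $U\cap\wh K$ nor $\p U\cap\wh K$ when $\wh K\subset B$. Choose $\delta>0$ with $\phi(p)>m+\delta$.

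\emph{Step 1: the set where $\phi$ is small.} Since $\phi$ is upper semicontinuous on a neighbourhood of $\conj U$, the set $W=\{\phi<m+\delta\}$ is open and contains the compact set $\p U\cap\wh K$. Hence $L=\p U\setminus W$ is compact and disjoint from $\wh K$.

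\emph{Step 2: a separating function.} For each $q\in L$ there is a polynomial $P$ with $|P(q)|>1\ge\max_K|P|$. By compactness, finitely many $P_1,\dots,P_N$ suffice to have $\max_j|P_j|>1$ on $L$. Put $v=\max_j\log|P_j|$ and $u=\max(v,0)$. Then $u$ is continuous and psh on $\C^n$, $u=0$ on $K$, $u\ge0$ everywhere, and $u\ge c>0$ on $L$. Take $C$ so large that $Cu>\phi-m-\delta$ on $L$; this is possible since $\phi$ is bounded above near $\conj U$. By continuity of $u$ and upper semicontinuity of $\phi$, the inequality persists on a neighbourhood of $L$. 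On a neighbourhood of $\p U\cap W$ we have $\phi-m-\delta<0\le Cu$. Therefore $\phi-m-\delta<Cu$ on an open neighbourhood $N$ of $\p U$.

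\emph{Step 3: gluing.} Define $\psi=\max(\phi-m-\delta,\,Cu)$ on $U$ and $\psi=Cu$ on $\C^n\setminus U$. On $N$ both descriptions give $Cu$. So $\psi$ is locally either a maximum of two psh functions or equal to $Cu$, and hence psh on $\C^n$. On $K$, which lies outside $U$, we get $\psi=Cu=0$, so $\max_K\psi\le 0$. On the other hand, $\psi(p)\ge\phi(p)-m-\delta>0$. This contradicts $p\in\wh K$ by the psh characterisation of the hull, which proves~\eqref{max-phull-ineq}.

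The main obstacle is Step 3's boundary bookkeeping: the strict inequality must hold on a full neighbourhood of $\p U$, including points of $K\cap\p U$ and points where $\phi$ is only upper semicontinuous. This is exactly why $u$ is truncated at $0$, so that $u\ge0$ everywhere, and why $L$ is taken compact.

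If one prefers to avoid citing the psh-hull characterisation, note that the only hull property used is at $p$. One can then regularise $\psi$ by convolution and apply the Oka--Weil/Hörmander approximation to get a polynomial contradicting $p\in\wh K$. I would rather cite the standard fact.
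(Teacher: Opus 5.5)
Your proof is correct and is essentially the paper's argument (Rosay's, given in the proof of Theorem~\ref{max-pOhull} with $\Omega=\C^n$). In both, one argues by contradiction and glues, inside $U$, the maximum of a shifted $\phi$ with a global psh function that is nonpositive on $K$ and dominates the shifted $\phi$ near $\p U$, which contradicts the psh characterisation of $\wh{K}$. The only difference is cosmetic: you build the separating function from finitely many polynomials so that it is positive on the compact set $L=\p U\setminus W$, whereas the paper uses Rosay's function vanishing exactly on the hull. One small fix: take $m$ to be any real number in $\bigl(\max_{\p U\cap\wh{K}}\phi,\ \phi(p)\bigr)$ rather than the maximum itself, so that the case where this maximum is $-\infty$ (e.g.\ $\p U\cap\wh{K}=\varnothing$) is also covered.
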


Applying the theorem to moduli of polynomials recovers Rossi's 
local maximum principle~\cite{Ro} for polynomially convex hulls:
\begin{equation}
\label{local-phull-phull}
\conj{U} \cap  \wh{K} \subseteq \wh{\p U\cap \wh{K}}
\end{equation}
for all open $U\subset\C^n$ such that $U\cap K=\varnothing$.

Theorem~\ref{max-phull} was derived by Rosay~\cite{R} 
(see also~\cite[pp.~78--79]{Sto} 
and the proof of Theorem~\ref{max-pOhull} below) 
from the coincidence of the polynomially convex hull 
and the convex hull with respect to (continuous) psh functions in $\C^n$. 
The latter classical fact goes back to Oka (cf.~the italicised statement 
in the middle of p.~143 in~\cite{O}) and may be found,
e.g., in \cite[Theorem 1.3.11]{Sto} or~\cite[Theorem 4.3.4]{H}.

Rosay's argument can be used {\it mutatis mutandis\/} for the following
more general class of psh hulls.

\begin{df}
Let $\Omega$ be a domain in $\C^n$ and $K\Subset \conj{\Omega}$  be 
a compact subset of its closure. The $P(\conj{\Omega})$-hull of $K$
is the compact subset
$$
\ph{K}:=\{ z\in \conj{\Omega} \mid \phi(z)\le \max\limits_K \phi \text{ for all } \phi\in P(\conj{\Omega})\},
$$
where $P(\conj{\Omega})=PSH(\Omega)\cap C^0(\conj{\Omega})$ is the cone of continuous
functions on $\conj{\Omega}$ that are psh in~$\Omega$.
\end{df}

\begin{exm}
If $\Omega=\C^n$, then $\ph{K}=\wh{K}$ as recalled above.
\end{exm}

\begin{thm}
\label{max-pOhull}
Let $K\Subset\conj{\Omega}$ be compact and $U\subset\C^n$ be open with $U\cap K=\varnothing$.
Let $\phi$ be a function continuous on $U'\cap\conj{\Omega}$ and psh on $U'\cap\Omega$
for an open set $U'\supset \conj{U}$. Then
\begin{equation}
\label{max-pOhull-ineq}
\phi(p) \le \max\limits_{\p U\mathop{\cap}\ph{K}} \phi
\end{equation}
for every $p\in U\cap\ph{K}$.
\end{thm}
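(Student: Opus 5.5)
Following Rosay's argument, I will argue by contradiction and glue $\phi$ to a global function in $P(\conj{\Omega})$. Write $\wh{K}_\Omega:=\ph{K}$ for brevity.

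Suppose $p\in U\cap \wh{K}_\Omega$ and $\phi(p) > M:=\max_{\p U\cap \wh{K}_\Omega}\phi$. If $\p U\cap\wh{K}_\Omega=\varnothing$, set $M=-\infty$. Fix $c$ with $M<c<\phi(p)$; when $M=-\infty$ take $c$ below $\min_{\conj U\cap \wh K_\Omega}\phi$ suitably, or shift $\phi$. The compact set $L:=\p U\cap \wh{K}_\Omega$ lies in the open set $\{\phi<c\}$. The first step is to separate $\wh{K}_\Omega$ from the rest of $\p U$ by a global function. For each point $q\in \p U\cap\conj{\Omega}$ with $q\notin \wh{K}_\Omega$ and $\phi(q)\ge c$, the definition of the hull gives $\psi_q\in P(\conj{\Omega})$ with $\psi_q(q)>\max_K\psi_q$. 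After adding a constant and rescaling, we may assume $\max_K\psi_q<0<\psi_q(q)$. The set $\{\phi\ge c\}\cap\p U\cap\conj{\Omega}$ is compact; note $\phi$ is continuous on $U'\cap\conj{\Omega}$. Taking the maximum $\psi$ of finitely many $\psi_q$ then gives $\psi\in P(\conj{\Omega})$ with $\psi<0$ on $K$ and $\psi>0$ on that compact set. Multiplying $\psi$ by a large constant $A$, we may also arrange $A\psi>\phi-c$ there, since $\phi$ is bounded on the compact set $\conj U\cap\conj\Omega$.

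Next I define the glued function
\[
\Phi=\begin{cases}\max(\phi-c,\,A\psi) & \text{on } \conj U\cap\conj{\Omega},\\ A\psi & \text{on } \conj{\Omega}\setminus U.\end{cases}
\]
On $\p U\cap\conj{\Omega}$, either $\phi<c$, so that $\max(\phi-c,A\psi)=A\psi$ nearby because $\psi\ge0$ near such points is not needed: $\phi-c<0$. To make this work I need $A\psi\ge\phi-c$ near all of $\p U\cap\conj\Omega$. At points where $\phi<c$ this holds provided $\psi$ is not too negative there, which can fail. The fix is to replace $\psi$ by $\max(\psi,-\delta)$ and handle the small defect by continuity; alternatively, I can use $\max(\phi-c,\,A\psi)$ with $\phi-c$ replaced by $\phi-c-\eta$. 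Alternatively, I can shrink $U$ slightly to a neighbourhood of $\{\phi\ge c\}$-part, in the standard way of Rosay. In any case the gluing is continuous and psh near $\p U$, because the max coincides with $A\psi$ on a neighbourhood of $\p U$ in $\conj\Omega$ by continuity and compactness. Hence $\Phi\in P(\conj{\Omega})$.

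Finally, $K\cap U=\varnothing$, so $\Phi=A\psi<0$ on $K$. Meanwhile $\Phi(p)\ge\phi(p)-c>0$. Therefore $p\notin\wh{K}_\Omega$, a contradiction.

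I expect the main obstacle to be arranging that the max equals $A\psi$ on a full neighbourhood of $\p U$, including at points of $\p U\cap\wh K_\Omega$ where $\psi$ may be negative. I would handle this by choosing $c$ strictly between $M$ and $\phi(p)$ and working on the slightly smaller open set $U_\eps=\{z\in U:\dist(z,\p U)>\eps\}$ with compactness. If that fails, I would fall back on the standard Rosay reduction: the hull equals the intersection of sublevel sets $\{\psi\le 0\}$, and I would replace $\wh K_\Omega$ by a small compact neighbourhood $\{\psi\le\eps\}$ on which $\phi<c$ still holds near $\p U$.
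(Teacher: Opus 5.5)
The overall strategy, a Rosay-type gluing of $\phi-c$ with a global function from $P(\conj{\Omega})$, is the right one. However, the proof is incomplete at exactly the step you flag. Since $\psi<0$ on $K$, the definition of the hull gives $\psi\le\max_K\psi<0$ on all of $\ph{K}$, and in particular at every point of $\p U\cap\ph{K}$. At such points nothing prevents $A\psi<\phi-c$; once $A$ is large it certainly happens. Then the two prescriptions for $\Phi$ disagree at points of $\p U$, so $\Phi$ is not even well defined, let alone in $P(\conj{\Omega})$. Two of your remedies do not address this: shrinking $U$ to $U_\eps$, and replacing $\phi-c$ by $\phi-c-\eta$ (a fixed shift cannot compensate for $A\psi$ being arbitrarily negative). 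Your sentence asserting that the maximum coincides with $A\psi$ near $\p U$ ``in any case'' does not follow from anything preceding it. There is also a secondary gap. You use compactness of $Q:=\{\phi\ge c\}\cap\p U\cap\conj{\Omega}$ and of $\conj{U}\cap\conj{\Omega}$, but neither $U$ nor $\Omega$ is assumed bounded (e.g.\ $\Omega=\C^n$).

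The missing observation is that you never need $\Phi<0$ on $K$, only $\Phi(p)>\max_K\Phi$. So use $\psi^+:=\max\{\psi,0\}\in P(\conj{\Omega})$. It vanishes on $K$ and equals $\psi>0$ on $Q$, so $A\psi^+>\phi-c$ on $Q$. On $(\p U\cap\conj{\Omega})\setminus Q$ one has $\phi-c<0\le A\psi^+$. The strict inequality on $\p U\cap\conj{\Omega}$ persists on a neighbourhood by continuity. Hence the glued $\Phi$ lies in $P(\conj{\Omega})$, vanishes on $K$, and satisfies $\Phi(p)\ge\phi(p)-c>0$. Your truncation at $-\delta$ also works, but only with a quantitative choice you did not make: $A\delta<c-\max_Z\phi$, where $Z:=\p U\cap\conj{\Omega}\cap\{\psi\le 0\}$. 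This $Z$ is compact and disjoint from $Q$, so $\max_Z\phi<c$. This is exactly the role of the paper's non-negative $\psi\in P(\conj{\Omega})$ vanishing precisely on $\ph{K}$. There the competitor $-\eps+C\psi\ge-\eps$ beats $\phi$ on the neighbourhood $W$ of $\p U\cap\ph{K}$ where $\phi<-\eps$, and a large $C$ handles the rest of the boundary. For boundedness, first replace $U$ by $U\cap B$, where $B$ is an open ball containing $\ph{K}$, as the paper does with $V$. This keeps $p$ and $U\cap K=\varnothing$. Since $\ph{K}\cap\p B=\varnothing$, one has $\p(U\cap B)\cap\ph{K}\subseteq\p U\cap\ph{K}$, so the inequality for $U\cap B$ implies the one for $U$. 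With these repairs your argument is a correct and slightly more elementary variant of the paper's. A finite maximum of separating functions truncated at $0$ replaces Rosay's function with zero set exactly $\ph{K}$, which needs a countable weighted sum.
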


Applying the theorem to $\phi\in P(\conj{\Omega})$ yields a local characterisation
of $P(\conj{\Omega})$-hulls similar to~\eqref{local-phull-phull}. Namely,
\begin{equation}
\label{local-pOhull-pOhull}
\conj{U} \cap  \ph{K} \subseteq \phbig{\p U\cap \ph{K}}
\end{equation}
for all open $U\subset\C^n$ such that $U\cap K=\varnothing$.
Note that the right hand side is contained in the polynomially
convex hull of $\p U\cap \ph{K}$ because moduli of polynomials
are in $P(\conj{\Omega})$ for every $\Omega$.

\begin{proof}[Proof of Theorem\/~{\rm \ref{max-pOhull}}] 
The argument is by contradiction. Suppose that \eqref{max-pOhull-ineq} is false.
Adding a constant, we may assume that $\phi(p)>0$ whereas $\phi<-\eps$ 
for some $\eps>0$ on $W\cap\conj{\Omega}$ for a neighbourhood $W\supset \p U\cap \ph{K}$.

There exists a non-negative function $\psi\in P(\conj{\Omega})$ vanishing precisely on $\ph{K}$.
This is proven exactly as in~\cite[\S 2]{R} by taking a weighted sum of functions in $P(\conj{\Omega})$
vanishing on $\ph{K}$ and positive at a given point in its complement.

Let $V$ be the intersection of $U$ with a relatively compact neighbourhood of $\ph{K}$.
There exists $C>0$ such that $\phi < -\eps + C\psi$ on the compact set 
$(\p V - W)\cap\conj{\Omega}\Subset \conj{\Omega}-\ph{K}$.
Then 
$$
\wt{\phi}:=
\left\{ 
\begin{array}{ll}
\max\{\phi, -\eps + C\psi\} & \text{ on } V\cap\conj{\Omega},\\
- \eps + C\psi & \text{ on } \conj{\Omega}-V
\end{array}
\right.
$$
is a well-defined function in $P(\conj{\Omega})$ 
that is equal to $-\eps<0$ on $K$ and to $\phi(p)>0$ at $p\in\ph{K}$,
which contradicts the definition of the $P(\conj{\Omega})$-hull.
\end{proof} 

\begin{rem}
If the function $\phi$ in Theorem~\ref{max-pOhull} is psh on $U'\cap\conj{\Omega}$,
then it need not be assumed continuous because it can be regularised by convolution.
This is the situation in Theorem~\ref{max-phull} where $\p\Omega=\varnothing$.
\end{rem}

\subsection{Local maximum principle on rationally convex hulls}
\label{subsec-lmp-rat-hulls}
There is a version of the local maximum principle for psh functions 
on rationally convex hulls, which has apparently escaped 
earlier notice. 

\begin{thm}
\label{max-rhull}
Let $K\Subset\C^n$ be compact and $U\subset\C^n$ be open with $U\cap K=\varnothing$.
For every psh function $\phi$ on $\conj U$ and $p\in U\cap \rh{K}$ we have
\begin{equation}
\label{max-rhull-ineq}
\phi(p) \le \max\limits_{\p U\mathop{\cap}\rh{K}\strut} \phi.
\end{equation}
\end{thm}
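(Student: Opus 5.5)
Set $L:=\rh{K}$. The plan is to reduce Theorem~\ref{max-rhull} to the already established polynomial case, Theorem~\ref{max-pOhull}, by localising away from $K$. Two facts about rational convexity drive the reduction. First, $L$ is rationally convex. Second, a compact set is rationally convex exactly when each point outside it lies on an algebraic hypersurface $\{Q=0\}$, with $Q$ a polynomial, that misses the set. Equivalently, $\C^n\setminus L$ is a union of such hypersurfaces, and $L$ is polynomially convex in each Stein domain $\C^n\setminus\{Q=0\}$ containing it.

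The first step is the following characterisation, which I take as the standard one: $L$ is the $PSH$-hull of $K$ in every domain of the form $\Omega_Q:=\C^n\setminus\{Q=0\}$ with $Q$ nonvanishing on $L$, and more precisely
$$
L=\bigcap_{Q}\ \wh{K}_{\Omega_Q}.
$$
Here $\wh{K}_{\Omega_Q}$ is the holomorphically convex hull of $K$ in the Stein manifold $\Omega_Q$, and it coincides with the psh-convex hull there by Hörmander's theorem~\cite[Theorem 4.3.4]{H}.

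The argument then runs by contradiction. Suppose $\phi(p)>\max_{\p U\cap L}\phi$. Choose a neighbourhood $W$ of $\p U\cap L$ on which $\phi<\phi(p)-\eps$. The compact set $\p U\setminus W$ is disjoint from $L$, so every point of it lies on some hypersurface $\{Q=0\}$ avoiding $L$, and hence has a neighbourhood disjoint from $\wh{K}_{\Omega_Q}$. By compactness, finitely many polynomials $Q_1,\dots,Q_m$ suffice, and their product $Q:=Q_1\cdots Q_m$ is nonvanishing on $L$. Taking a further finite intersection if needed, and using that the hulls $\wh{K}_{\Omega_Q}$ decrease as $Q$ gains factors, I arrange that the single hull $\hat L_Q:=\wh{K}_{\Omega_Q}$ satisfies $\hat L_Q\cap\p U\subset W$. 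This holds because $\hat L_{Q_1\cdots Q_m}\subset\bigcap_j\hat L_{Q_j}$.

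Now apply the local maximum principle in the Stein manifold $\Omega_Q$. Embed $\Omega_Q$ as the closed submanifold $\{(z,w): wQ(z)=1\}\subset\C^{n+1}$. Under this embedding $\hat L_Q$ becomes the polynomial hull of the image of $K$, and $\phi$ pulls back to a psh function on a neighbourhood of the relevant closed set. Theorem~\ref{max-phull} applies in $\C^{n+1}$ after extending $\phi$ psh to a neighbourhood of $\conj U\times\C$ by composing with the projection; the projection of a psh function is psh. Since $U\times\C$ misses the image of $K$, this gives
$$
\phi(p)\le\max_{\p U\cap\hat L_Q}\phi\le\phi(p)-\eps,
$$
a contradiction. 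The main obstacle is the compactness step. It requires knowing that $\hat L_Q$ stays away from any given point of $\C^n\setminus L$ once $Q$ contains a suitable factor, together with the identification of rational hulls as intersections of the hulls $\wh{K}_{\Omega_Q}$. This is where the rational convexity theory~\cite[\S1.5]{Sto} must be invoked carefully. The boundedness of $\hat L_Q$ near $\{Q=0\}$ is automatic, since $|1/Q|$ is bounded on $K$.
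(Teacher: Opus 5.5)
Your argument is correct, and it follows a different route from both of the paper's proofs.

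\textbf{The paper's proofs.} The first obtains \eqref{local-rhull-phull} from Rossi's local maximum modulus principle for the uniform algebra $\RR(K)$, whose spectrum is $\rh{K}$, and then applies Theorem~\ref{max-phull} to $\p U\cap\rh{K}$. The second is a Rosay-type argument built on the Duval--Sibony characterisation (Theorem~\ref{rhull-psh}). There $\phi$ is glued into $\rho$ to produce a rationally convex sublevel set that contains $K$ but not $p$.

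\textbf{Your proof.} You write $\rh{K}=\bigcap_Q \hat{L}_Q$, where $\hat{L}_Q$ is the hull of $K$ with respect to $\{P/Q^k\}$. Equivalently, $\hat{L}_Q$ is the projection of the polynomial hull of the lift $\{(z,1/Q(z)) : z\in K\}\subset\C^{n+1}$. Compactness together with $\hat{L}_{Q_1Q_2}\subseteq \hat{L}_{Q_1}\cap\hat{L}_{Q_2}$ gives a single $Q$ with $\hat{L}_Q\cap\p U\subset W$. Theorem~\ref{max-phull}, applied to $U\times\C$ and $\phi\circ\mathrm{pr}$, then finishes the proof. Note that $(p,1/Q(p))$ lies in the lifted hull because each $P(z,1/Q(z))$ is regular on $K$ and at $p$.

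\textbf{Comparison.} Your route is more elementary: it uses only Theorem~\ref{max-phull} and the definitions, with neither Rossi's theorem for general uniform algebras nor $L^2$ theory. It also yields \eqref{local-rhull-phull} as a by-product. The cost is that it depends on the affine lifting trick in $\C^n$. The Duval--Sibony proof is the one set up to carry over to the Stein and projective settings of \cite{N1,BGS}.

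\textbf{Points to tidy up.}
\begin{enumerate}
\item Two of your side claims are false: that $L$ equals the hull of $K$ in each single $\Omega_Q$, and that $L$ is $\mathcal{O}(\Omega_Q)$-convex for every $Q$ avoiding it. For the unit circle in $\C$ and $Q=z-2$, the hull in $\C\setminus\{2\}$ is the closed disc. You only use the intersection formula, which is true and follows from the definition. Every rational function regular on $K$ has the form $P/Q$ with $Q\ne 0$ on $K$, and such a $Q$ has no zeros on $\rh{K}$ (test with $1/(Q-c)$ and let $c\to 0$).
\item $\p U\setminus W$ need not be compact. Either intersect $U$ with a large ball, as the paper does, or cover only $(\p U\setminus W)\cap\wh{K}$, using $\hat{L}_Q\subseteq \hat{L}_1=\wh{K}$.
\item The result you actually apply is Theorem~\ref{max-phull}, not Theorem~\ref{max-pOhull}. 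H\"ormander's identification of the holomorphic hull with the psh hull is not needed.
\end{enumerate}
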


Applying this theorem to moduli of polynomials yields
\begin{equation}
\label{local-rhull-phull}
 \conj{U} \cap \rh{K} \subseteq \yhwidehat{\p U\cap \rh{K}}
\end{equation}
for all open $U\subset\C^n$ such that $U\cap K=\varnothing$.
Conversely, Theorem~\ref{max-rhull} follows from 
inclusion~\eqref{local-rhull-phull} by applying Theorem~\ref{max-phull} 
to the compact set $\p U\cap \rh{K}$ and the open set~$U$.

Inclusion~\eqref{local-rhull-phull} can be derived from Rossi's 
general local maximum modulus principle~\cite{Ro}.
For a compact set $K\Subset\C^n$, let $\RR(K)\subseteq C(K)$ 
denote the uniform closure of the algebra of rational functions 
that are regular on~$K$. 
The spectrum $S_{\RR(K)}$ of this uniform algebra  
coincides with the rationally convex hull $\rh{K}$ 
and the Gelfand embedding of algebras $\RR(K)\hookrightarrow C(\rh{K})$
is given by (the limits of) point evaluations, 
see~\cite[\S III.2]{Ga} or~\cite[\S 1.2]{Sto}.
Hence, Rossi's local maximum principle~\cite[p.~1]{Ro} 
and ~\cite[III.8.2]{Ga} implies
\begin{equation}
\label{RossiMax}
\max\limits_{\conj{V}} |f| \; \le \; \max\limits_{\conj{V}-V} |f|
\end{equation}
for all $f\in\RR(K)\subseteq C(\rh{K})$ 
and every $V$ open in $\rh{K}-K$. 
(Note that all proofs of this result use methods
from several complex variables~--- 
solutions to Cousin problems and the Oka--Weil theorem.)
Since polynomials are clearly contained in~$\RR(K)$,
inclusion~\eqref{local-rhull-phull} follows from~\eqref{RossiMax}.

\begin{rem}
A rational function regular on $\p U\cap \rh{K}$ does not
manifestly belong to $\RR(K)$ and hence 
we may not take the rationally convex hull of $\p U\cap \rh{K}$
on the right-hand side of~\eqref{local-rhull-phull}.
In fact, it can indeed happen that
\begin{equation}
\label{local-rhull-rhull}
\conj{U} \cap \rh{K} \nsubseteq \rh{\p U\cap \rh{K}}
\end{equation}
as shown by Stolzenberg~\cite[Example 1.11]{St}.
\end{rem}

It is perhaps worthwhile to give a direct proof of Theorem~\ref{max-rhull}
(and thence of inclusion~\eqref{local-rhull-phull})
in the spirit of Rosay's proof of Theorem~\ref{max-phull}. 
This proof relies on the description of rationally convex hulls
originally obtained by Duval and Sibony~\cite{DS} using H\"ormander's 
$L^2$-methods~\cite{H}.
We state their result in the form admitting immediate
generalisations to Stein and projective manifolds, 
see~\cite{N1} and~\cite[Theorem 1.3]{BGS}.

\begin{thm}
\label{rhull-psh}
A compact subset $K\Subset\C^n$ is rationally convex
if and only if it has a neighbourhood basis 
such that every element of the basis is of the form $U_\rho=\{\rho < 0\}$, 
where $\rho$ is spsh in a neighbourhood of~$\conj{U_\rho}$
and $dd^c\rho$ extends to a K\"ahler form on~$\C^n$.
\end{thm}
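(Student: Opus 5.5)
The plan is to prove the two implications separately. The direction ``rationally convex $\Rightarrow$ neighbourhood basis'' should be elementary. The converse is where the $L^2$-methods of Duval--Sibony enter.

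\emph{Necessity.} Let $K=\rh{K}$ and let $W\supset K$ be open. For every $q\notin W$ there is a polynomial $Q$ with $Q(q)=0$ and $Q\ne 0$ on $K$. By compactness of a large sphere minus $W$ and of $\conj{B}-W$, finitely many such polynomials $Q_1,\dots,Q_m$ suffice: after normalising $\min_K|Q_j|>1$, the compact set $\{|Q_j|\ge 1,\ j=1,\dots,m\}\cap\conj{B}$ lies in $W$. Here $B$ is a large ball containing $K$.

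I would set
\[
\rho=\eps|z|^2 - R^2\eps + \widetilde{\max}\{-\log|Q_1|,\dots,-\log|Q_m|,\ \eps|z|^2-R^2\eps\},
\]
where $\widetilde{\max}$ is a regularised maximum and the constants are chosen so that $\{\rho<0\}\Subset W$. Near $\conj{U_\rho}$ the functions $\log|Q_j|$ are pluriharmonic, so $\rho$ is spsh there.

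The point needing care is that $dd^c\rho$ must extend to a \emph{global} Kähler form. My first attempt would exploit that the functions $-\log|Q_j|$ are pluriharmonic on $\conj{U_\rho}$, so that there $dd^c\rho$ differs from $\eps\, dd^c|z|^2$ only through the regularised maximum. I would choose the regularisation so that $dd^c\rho=\eps\,dd^c|z|^2+dd^c h$, where $h$ is a function defined near $\conj{U_\rho}$. Then I would extend $h$ by a cut-off and add a large multiple of $|z|^2$ outside a neighbourhood of $\conj{U_\rho}$. Since $H^2(\C^n)=0$, every closed real $(1,1)$-form on $\C^n$ is $dd^c$-exact, so the extension problem reduces to extending a function while keeping positivity.

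If the regularised maximum makes this awkward, I would fall back on a single polynomial. Replace the family $Q_j$ by products and powers, as in the classical description of rational hulls via one hypersurface per point, so that $\rho=\eps|z|^2-\log|Q|+c$. In that case $dd^c\rho=\eps\,dd^c|z|^2$ on $\conj{U_\rho}$, which visibly extends.

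\emph{Sufficiency.} Given such a basis, fix $p\notin K$ and choose $U_\rho\ni K$ with $p\notin\conj{U_\rho}$. Let $\omega$ be the Kähler extension of $dd^c\rho$ and write $\omega=dd^c u$ with $u$ spsh on $\C^n$. Then $\rho-u$ is pluriharmonic near $\conj{U_\rho}$.

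The goal is an algebraic hypersurface through $p$ disjoint from $K$. Following Duval--Sibony, I would first modify $\omega$ to a closed positive $(1,1)$-current $T$ on $\C^n$ with $T=0$ near $K$ and $T$ having positive Lelong number at $p$. One way is to glue $\rho$ with a function of the form $\log|z-p|$ via a maximum construction; this mirrors Rosay's gluing in the proof of Theorem~\ref{max-pOhull}. The next step is to approximate $kT$, after rescaling so that its class is integral in the appropriate sense (at infinity, against $\log(1+|z|^2)$ growth), by currents of integration $\frac1k[Q_k=0]$. Concretely, one solves $\bar\p$ with Hörmander's $L^2$ estimates for weights $e^{-2k\phi}$ with $dd^c\phi=T$, obtaining polynomials $Q_k$ with $Q_k(p)=0$ and $|Q_k|$ bounded below on $K$. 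The rational function $1/Q_k$ then shows $p\notin\rh{K}$.

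I expect this last step to be the main obstacle. It requires producing polynomials, rather than merely entire functions, with controlled growth: the weight must be comparable to $\log(1+|z|^2)$ at infinity. It also requires keeping $Q_k$ nonvanishing on $K$, which means checking that the $L^2$ bound converts to a pointwise lower bound on $K$, where the weight is pluriharmonic. I would handle the growth condition by working on $\CP^n$ with $\OO(k)$ and Hörmander's theorem for positive line bundles.
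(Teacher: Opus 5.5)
The paper does not prove this statement. It quotes it from Duval--Sibony and uses it as a black box, so your argument has to stand on its own. As written it has a genuine gap in each direction.

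\textbf{Necessity.} Here the whole difficulty is the global K\"ahler extension, and neither route you offer supplies it.

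``Adding a large multiple of $|z|^2$ outside a neighbourhood of $\conj{U_\rho}$'' is not an admissible operation. To absorb the negative part of $dd^c(\chi h)$ you need a psh function on $\C^n$ whose Levi form vanishes near $\conj{U_\rho}$ but is positive on the transition region of $\chi$. You give no such function, and its existence is of the same nature as the statement itself. Nor is there an obvious global candidate: $dd^c\rho$ on $\C^n-\bigcup_j\{Q_j=0\}$ typically blows up along $\{Q_i=Q_j=0\}$, where two entries of the regularised maximum tend to $+\infty$ together.

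The single-polynomial fallback fails too. For $Q=\prod_j Q_j^{k_j}$, at a point outside $W$ only one of the numbers $-\log|Q_j|$ is known to be positive, while the others can be very negative. So $\{\eps|z|^2-\log|Q|+c<0\}$ need not lie in $W$: one separating hypersurface per point does not give one for all points.

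The repair is to replace the maximum over $j$ by a \emph{sum} of one-hypersurface terms,
$$
\rho=\eps|z|^2-c+\sum_j\widetilde{\max}\{0,\,c_j-\log|Q_j|\}.
$$
Choose the constants as follows: $\log|Q_j|>c_j+2\eta$ on $K$; the sets $\{\log|Q_j|<c_j-1\}$ cover $\conj{B}-W$ for a ball $B=B(0,R_1)\supset K$; and $\eps\max_K|z|^2<c<\min\{1,\eps R_1^2\}$. Each summand is psh and vanishes near $K$. Near $\{Q_j=0\}$ it coincides with the pluriharmonic function $c_j-\log|Q_j|$, so its Levi form extends by zero to a smooth closed positive form on $\C^n$. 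Since $\rho\to+\infty$ along $\bigcup_j\{Q_j=0\}$, near $\conj{U_\rho}$ the form $dd^c\rho$ agrees with the K\"ahler form $\eps\,dd^c|z|^2+\sum_j dd^c\widetilde{\max}\{0,c_j-\log|Q_j|\}$. Finally $K\subset U_\rho\subset\conj{U_\rho}\subset W$.

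\textbf{Sufficiency.} The H\"ormander strategy, with weight $e^{-2ku}$ and a logarithmic pole at $p$, is the right one, but you have misidentified the obstacle.

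Growth at infinity is irrelevant. An \emph{entire} $f$ with $f(p)=0$ and $f\ne0$ on $K$ already shows $p\notin\rh{K}$: take a Taylor polynomial $P$ of $f$ on a ball containing $K\cup\{p\}$ and use $P-P(p)$. Passing to $\CP^n$ and $\OO(k)$ is not available anyway. The given K\"ahler form lives on $\C^n$, usually has infinite mass, and carries no class to make integral since $H^2(\C^n)=0$.

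What is missing is the source of the lower bound on $K$. An $L^2$ solution of $\bar\partial v=g$ only gives an upper bound, so you need a main term. Concretely, with $\rho<-\delta$ on $K$ and $\chi$ a cut-off equal to $1$ on $\{\rho\le-3\delta/4\}$ and supported in $\{\rho<-\delta/2\}$:
\begin{itemize}
\item take $F_k$ holomorphic near $\{\rho\le-\delta/2\}$ with $\log|F_k|=k(u-\rho)+O(1)$;
\item set $f_k=\chi F_k-v_k$, where $\bar\partial v_k=F_k\,\bar\partial\chi$.
\end{itemize}
Because $u-\rho$ is merely pluriharmonic, $F_k$ exists only if the periods of $d^c(u-\rho)$ over $H_1$ of that neighbourhood lie in $\frac1k\Z$ after suitable normalisation. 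This is the integrality that actually matters. You arrange it by adding to $k(u-\rho)$ a bounded pluriharmonic function with prescribed periods. Such functions exist because, for a generic level, $\{\rho<-\delta/4\}$ is Stein with finitely generated $H_1$. This is the Duval--Sibony device of making the class of $d^c\phi$ rational.

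With this in place, $|F_k|^2e^{-2ku}=e^{-2k\rho+O(1)}$. This is at least $e^{2k\delta+O(1)}$ on $K$ and at most $e^{3k\delta/2+O(1)}$ on $\supp\bar\partial\chi$. The mean-value inequality for $v_k$ on small balls, using continuity of $u$, then gives $|v_k|<|F_k|$ on $K$ for large $k$. The pole of the weight forces $v_k(p)=0$, hence $f_k(p)=0$.
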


\begin{proof}[Alternative proof of Theorem\/~{\rm \ref{max-rhull}}]
Let us suppose that \eqref{max-rhull-ineq} is false and argue by contradiction. 
$U$ may be assumed relatively compact by intersecting it with 
a relatively compact neighbourhood of $\rh{K}$.
Adding a constant to $\phi$, we may assume that $\phi(p)>0$ and $\phi<0$
on a neighbourhood $W\supset \p U\cap \rh{K}$. 
Regularising $\phi$ by convolution as in~\cite[\S 2.6]{H} 
or, more generally, applying Richberg's approximation method~\cite[\S I.5]{D2},
we may further assume that $\phi$ is smooth.

Let $\tau:\R\to \R$ be a smooth convex increasing function 
such that $\tau(x)= 0$ for $x\le 0$ and $\tau(x)>0$ for $x>0$. 
Set $\phi_1 = \tau\circ\phi$. Then
$\phi_1$ is a smooth non-negative psh function on $\conj{U}$ 
that is identically zero on~$W$ and positive at~$p$.

Next choose a compactly supported smooth function $\lambda\ge 0$ on $\C^n$
such that $\lambda\equiv 1$ on a neighbourhood $V\supset\rh{K}$ 
and $\lambda\equiv 0$ on a neighbourhood of $\p U - W$.
(The latter set is closed and disjoint from $\rh{K}$.)
Define $\phi_2 = \lambda\cdot\phi_1$. 
Then $\phi_2$ is identically zero on a neighbourhood of $\p U$
and can be smoothly extended to $\C^n$ by~zero.
Note also that $\phi_2$ is psh on the complement 
of the {\it compact\/} subset $N:=\supp\lambda - V\Subset\C^n$. 

By the `only if' part of Theorem~\ref{rhull-psh}, there exists a neighbourhood 
$$
\rh{K} \Subset U_\rho \subset \conj{U_\rho} \Subset V
$$
such that $dd^c\rho$ 
extends to a K\"ahler form $\omega_\rho$ on $\C^n$.
In particular, $\omega_\rho$ is positive on the compact 
set $N$ defined above. Hence, there exists a positive constant $C>0$ 
such that $dd^c\phi_2 + C\omega_\rho$ is a K\"ahler form on $\C^n$.

Let $\sigma:\R\to \R$ be a smooth convex and strictly increasing 
function such that $\sigma(x)>-\phi_2(p)/C$ for $x<0$
and $\sigma(x)= x$ for $x\ge 0$.
Then $\rho_1:=\sigma\circ\rho$ is a spsh function near $\conj{U_\rho}$.
It equals $\rho$ outside of $U_\rho$
and satisfies $\phi_2(p)+C\rho_1(p)>0$.

Now $\rho_2:=\phi_2+C\rho_1$ is a spsh function
on a neighbourhood of $\conj{U_\rho}$ 
such that $U_{\rho_2}\subset U_\rho$ and 
$dd^c\rho_2$ extends to a K\"ahler form on $\C^n$
by setting it equal to $dd^c\phi_2 + C\omega_\rho$
on $\C^n-U_\rho$.
Hence, $\conj{U_{\rho_2}}$ 
is rationally convex by the `if' part of Theorem~\ref{rhull-psh}.
On the other hand, $K\subset U_{\rho_2}$ and $p\notin\conj{U_{\rho_2}}$,
which contradicts $p\in\rh{K}$. 
\end{proof}

\begin{rem}
\label{strict-max}
A useful observation is that for a {\it strictly\/} psh function~$\phi$
the inequalities in Theorems~\ref{max-phull} and~\ref{max-rhull} must be strict.
Indeed, one can find a psh function $\wt{\phi}$ such that $\wt{\phi}(p)=\phi(p)$
and $\wt{\phi}<\phi$ on $\conj{U}-\{p\}$.
\end{rem}

\section{Strictly pseudoconvex approximations}

\subsection{Strictly pseudoconvex approximations to $C^1$ boundaries}
\label{subsec-spsc}
We begin with several standard facts from analysis.
The first is a version of the tubular neighbourhood theorem
without loss of regularity.

\begin{lem}
\label{normal_nbh}
Let $H$ be a co-oriented $C^k$ hypersurface, $1\le k\le\infty$,
in an open set $U\subset\R^m$. 
Then a neighbourhood of $H$ is $C^k$ diffeomorphic 
to a neighbourhood of $H\times \{0\}\subset H\times\R_t$ 
co-oriented by~$dt>0$.
\end{lem}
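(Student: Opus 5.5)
The plan is to build the diffeomorphism from a $C^k$ transversal vector field, not from the normal field of $H$. The normal field of a $C^k$ hypersurface is only $C^{k-1}$, so the classical tubular neighbourhood map $(x,t)\mapsto x+t\nu(x)$ loses one derivative. Instead, I will use a smooth vector field $X$ on $U$ that is transverse to $H$ and points to the positive side of the co-orientation, and then use its flow.

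First I construct $X$. The co-orientation gives a continuous unit normal $\nu$ along $H$. Locally $H$ is the graph of a $C^k$ function, so near each point of $H$ a constant vector field is transverse to $H$ and positively co-oriented. I glue these constant fields with a smooth partition of unity subordinate to a locally finite cover of a neighbourhood of $H$. Positive transversality is a convex condition, since it amounts to $\langle X,\nu\rangle>0$ on $H$. Hence the glued field $X$ is smooth on an open set $U_0\supset H$ and satisfies $\langle X,\nu\rangle>0$ along $H$.

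Next I define $\Phi(x,t)=\phi^X_t(x)$ for $x\in H$ and $|t|<\eps(x)$, where $\phi^X$ is the flow of $X$. Here $\eps>0$ is a continuous function chosen so that the flow is defined. Since the flow is smooth and $H$ is a $C^k$ submanifold, $\Phi$ is $C^k$ on an open neighbourhood of $H\times\{0\}$ in $H\times\R$. Its differential at $(x,0)$ is $(v,s)\mapsto v+sX(x)$, which is an isomorphism because $X$ is transverse. By the inverse function theorem in the $C^k$ category, $\Phi$ is a local $C^k$ diffeomorphism near $H\times\{0\}$. Moreover $d\Phi(\p_t)=X$ points to the positive side, so the co-orientation $dt>0$ corresponds to the given one.

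The main obstacle is global injectivity, the usual point in tubular neighbourhood theorems. The map $\Phi$ restricted to $H\times\{0\}$ is the inclusion, which is an embedding (hypersurfaces are taken to be embedded and relatively closed in $U$). I will use the standard lemma that a local homeomorphism which is injective on a closed subset extends injectively to some neighbourhood of it. I will apply it in its locally compact, $\sigma$-compact form, exhausting $H$ by compacts $H_j$ and shrinking $\eps$ on each $H_j-H_{j-1}$, by contradiction with sequences. Concretely, suppose $\Phi(x_i,t_i)=\Phi(y_i,s_i)$ with $t_i,s_i\to0$ and $(x_i,t_i)\ne(y_i,s_i)$. Passing to limits in a compact piece gives $x_i,y_i\to x_\infty$, because $H$ is embedded, and this contradicts local injectivity near $(x_\infty,0)$. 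Restricting $\Phi$ to $\{|t|<\eps(x)\}$ then gives the required $C^k$ diffeomorphism onto an open neighbourhood of $H$.
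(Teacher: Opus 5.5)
Your proposal is correct and is essentially the paper's argument. Both proofs replace the merely $C^{k-1}$ normal by a $C^k$ transverse field built with a partition of unity, then apply the inverse/implicit function theorem to a map $H\times\R\to\R^m$; the paper uses the straight-line map $(p,t)\mapsto p+tX(p)$, whereas you use the flow of a smooth ambient field. Your explicit global-injectivity argument fills in a step the paper's sketch leaves implicit, though the ``standard lemma'' as you phrase it needs $\Phi$ restricted to $H\times\{0\}$ to be an embedding, not merely injective on a closed set; your sequence argument does in fact use this hypothesis.
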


\begin{proof}[Sketch of proof]
Using a partition of unity, 
we can construct a $C^k$ vector field $X:H\to\R^m$
that is positive with respect to the co-orientation.
By the implicit function theorem, there exists a 
neighbourhood $V$ of $H\times \{0\}$
such that the $C^k$ map $V\ni (p,t)\mapsto p+t X(p)\in\R^m$
is the required  diffeomorphism onto a neighbourhood of $H$ 
in $U$.
\end{proof}

Next we recall a key fact about smoothing Lipschitz functions
by convolution. Let $f:U\to\R$ be a locally Lipschitz function on 
an open subset of $\R^m$. By Rademacher's theorem, 
the gradient $\nabla f$ is defined a.\,e.\ in $U$
and locally bounded. Let $\chi:\R^m\to\R_{\ge 0}$
be a `mollifier', i.e.\ a smooth non-negative 
compactly supported function
such that $\int_{\R^m} \chi\, d\lambda_m =1$,
where $d\lambda_m$ is the Lebesgue measure on $\R^m$.
Then the convolution
$$
f\star \chi (x) = \int\limits_{\R^m} f(x-y)\chi(y)\, d\lambda_m(y)
$$
is a smooth function on $\{x\in U\mid x-y\in U \text{ for all } y\in\supp\chi\}$
and 
\begin{equation}
\label{grad_lip_conv}
\nabla\bigl(f\star \chi\bigr) (x) = \int\limits_{\R^m} \nabla f(x-y)\chi(y)\, d\lambda_m(y).
\end{equation}
For a proof, see~\cite[Proof of Theorem 9.67]{RW}.
 
\begin{exm}[Regularised maximum function]
\label{reg_max}
Let $\theta:\R\to\R_{\ge 0}$ be an even mollifier 
supported in $[-1,1]$. For any $\eta=(\eta_1,\ldots,\eta_m)\in\R^m_{>0}$,
the regularised maximum function on $\R^m$ is defined by
$$
M_\eta (x) = \int\limits_{\R^m} \max\{x_1+s_1,\ldots,x_m+s_m\} \prod\limits_{j=1}^m \eta_j^{-1}\theta(s_j/\eta_j)\, d\lambda_m(s),
$$
see \cite[Lemma I.5.18]{D2}.
The function $\max\{x_1,\ldots,x_m\}$ is Lipschitz on $\R^m$ 
and its gradient is equal to $e_j=\nabla x_j$
on the set where $x_j>x_\ell$ for all $\ell\ne j$. 
The union of these sets is of full Lebesgue measure in $\R^m$. 
Applying \eqref{grad_lip_conv}, we conclude that the gradient of $M_\eta$
lies in the convex hull of $e_j=\nabla x_j$. 
Hence, if $\phi_1,...,\phi_m$ are smooth functions on an open subset $V\subset\C^n$, 
then 
\begin{equation}
\label{reg_max_grad}
\nabla \bigl(M_\eta(\phi_1,\ldots,\phi_m)\bigr)(z) 
\in \mathop{\mathrm{conv}} \bigl\{\nabla \phi_1(z),\ldots,\nabla \phi_m(z)\bigr\}
\end{equation} 
for all $z\in V$ by the chain rule.\qed
\end{exm}

Finally, we need several known properties of the (Euclidean) distance functions.
If $E\subset\R^m$ is a closed subset, then the distance function
\begin{equation}
\label{dist-def}
d_E(x):=\inf\{|x-y| \mid y\in E\}
\end{equation}
is obviously Lipschitz (with Lipschitz constant~$1$). If $d_E$ is
differentiable at $x\in\R^m-E$, then 
\begin{equation}
\label{dist-grad}
\nabla d_E(x)=\frac{x-p_E(x)}{|x-p_E(x)|},
\end{equation}
where $p_E(x)\in E$ is a point such that $d_E(x)=|x-p_E(x)|$,
which is therefore necessarily unique. 
(To obtain~\eqref{dist-grad}, observe first that
$d_E\bigl(x+t(p_E(x)-x)\bigr)=(1-t)|x-p_E(x)|$ for all $t\in [0,1]$.
Differentiating at~$t=0$ gives $\nabla d_E(x)\boldsymbol{\cdot} (x-p_E(x))=|x-p_E(x)|$,
where $\boldsymbol{\cdot}$ denotes the Euclidean scalar product.
Since $|\nabla d_E(x)|\le 1$ by the Lipschitz bound, 
formula~\eqref{dist-grad} follows from the equality case of Cauchy--Schwarz.)

Let now $H=\{\rho=0\}$, $d\rho\ne 0$ on $H$, be a closed hypersurface of class $C^k$, $k\ge 1$, in~$\R^m$.
It follows from~\eqref{dist-grad} and Fermat's principle that if $d_H$ is differentiable at $x\notin H$,
then 
\begin{equation}
\label{dist-grad-hyp}
\nabla d_H(x)=\nu_H(p_H(x))
\end{equation}
is the unit normal to $H$ at the (unique) nearest point $p_H(x)\in H$ 
pointing in the direction of~$x$.
The next lemma shows that $d_H$ also has a `one-sided' gradient equal 
to a unit normal to $H$ at each point in~$H$.

\begin{lem}
\label{dist-grad-on-hyp}
Let $x_0\in H$, then
$$
d_H(x)=\nu^+_H(x_0)\boldsymbol{\cdot} (x-x_0) + o(|x-x_0|) \text{ as } x\to x_0 \text{ with } \rho(x)\ge 0,
$$
where $d\rho\bigl(\nu^+_H(x_0)\bigr)>0$ and $\boldsymbol{\cdot}$ is the Euclidean scalar product.
\end{lem}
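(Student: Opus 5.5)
We will prove this with the unit normal $\nu^+_H(x_0)=\nabla\rho(x_0)/|\nabla\rho(x_0)|$, which clearly satisfies $d\rho(\nu^+_H(x_0))>0$. The proof consists of a matching upper bound and lower bound for $d_H(x)$ on the side $\{\rho\ge 0\}$. Both bounds use only the $C^1$ property of $H$ near $x_0$. After a rigid motion we may take $x_0=0$ and $\nu:=\nu^+_H(0)=e_m$. By the implicit function theorem, near $0$ the hypersurface $H$ is a graph
$$
x_m=g(x'), \qquad g\in C^1,\quad g(0)=0,\quad \nabla g(0)=0,
$$
and $\{\rho\ge 0\}$ is locally $\{x_m\ge g(x')\}$. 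Set $\omega(r)=\sup_{|y'|\le r}|\nabla g(y')|$. Then $\omega(r)\to 0$ as $r\to 0$, and $|g(y')|\le \omega(|y'|)\,|y'|$.

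\emph{Upper bound.} Let $x=(x',x_m)$ with $x_m\ge g(x')$. The point $(x',g(x'))$ lies on $H$, so
$$
d_H(x)\le x_m-g(x') = \nu\boldsymbol{\cdot} x - g(x') \le \nu\boldsymbol{\cdot}x + \omega(|x|)\,|x|.
$$
The error term is $o(|x|)$.

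\emph{Lower bound.} We will show $d_H(x)\ge \nu\boldsymbol{\cdot}x - o(|x|)$.
\begin{itemize}
\item Since $H$ is closed, the infimum defining $d_H(x)$ is attained at some $y\in H$ with $|x-y|=d_H(x)\le |x|$, using $0\in H$.
\item Consequently $|y|\le 2|x|$, and for $x$ small $y$ lies in the graph chart, so $y=(y',g(y'))$.
\item We have
$$
\nu\boldsymbol{\cdot}x = (x_m-y_m) + g(y') \le |x-y| + \omega(2|x|)\,2|x|.
$$
\end{itemize}
This gives $d_H(x)\ge \nu\boldsymbol{\cdot}x - 2\omega(2|x|)|x|$. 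This step does not even need $\rho(x)\ge 0$; that hypothesis is only used in the upper bound.

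Combining the two bounds gives $|d_H(x)-\nu\boldsymbol{\cdot}x|\le 2\omega(2|x|)|x|=o(|x|)$. The only delicate point is that the nearest point $y$ must lie in the chart where the graph estimate applies. This is handled by the a priori bound $|y|\le 2|x|$, which uses $0\in H$, together with closedness of $H$ for the existence of a nearest point. If $H$ were not closed, it would suffice to use an almost-minimiser $y$ with $|x-y|\le d_H(x)+|x|^2$.
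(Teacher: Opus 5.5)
Your proof is correct, but it takes a different route from the paper's.

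\textbf{The paper's route.} With $x_0=0$, the paper takes a nearest point $y\in H$ (it calls it $x'$) and uses Fermat's principle to write $x=y+d_H(x)\,\nu^+_H(y)$. It then takes the scalar product with $\nu^+_H(0)$ and concludes from $\nu^+_H(0)\boldsymbol{\cdot} y=o(|y|)$ and $\nu^+_H(0)\boldsymbol{\cdot}\nu^+_H(y)=1+o(1)$. Both bounds thus come from a single exact identity. The cost is three extra ingredients:
\begin{itemize}
\item the first-order optimality condition at the foot point $y$;
\item continuity of the normal along $H$ near $x_0$;
\item the fact, left implicit in the paper, that $\rho(x)\ge 0$ forces the sign $+$ in front of $\nu^+_H(y)$.
\end{itemize}

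\textbf{Your route.} Your squeeze argument splits the two inequalities. The upper bound comes from the explicit competitor point $(x',g(x'))$ directly below $x$, which is exactly where one-sidedness enters, and transparently so. The lower bound comes from the trivial inequality $x_m-y_m\le|x-y|$ together with the flatness estimate $|g(y')|=o(|y'|)$ at the base point. You need no Fermat principle and no information about normals at nearby points. You use only the local graph structure and the tangency of $H$ at $x_0$. You also make visible that the lower bound holds on both sides of $H$.

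\textbf{What each buys.} The paper's version is shorter given the surrounding material. It is also phrased in the same terms, nearest-point normals and their modulus of continuity, as \eqref{dist-grad-hyp} and \eqref{C1-on-hyp}, which are what the proof of Proposition~\ref{def-func} actually uses afterwards.
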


\begin{rem}
\label{signed-dist-grad-on-hyp}
Let $d^{\,\uparrow}_H:=(\mathop{\mathrm{sign}}\rho)\, d_H$ be the {\it signed\/} distance function.
The lemma asserts that $\nabla d^{\,\uparrow}_H(x_0)=\nu^+_H(x_0)$ for all $x_0\in H$.
\end{rem}
 
\begin{proof}[Proof of the lemma]
Without loss of generality, we may assume $x_0=0$. Let $x'\in H$ be any point such
that $|x-x'|=d_H(x)$. Note that 
$$|x'|\le |x|+d_H(x)\le 2|x|
$$ 
by the triangle inequality and the definition of $d_H$.
Furthermore, 
$$
x=x'+d_H(x)\nu^+_H(x')
$$ 
because $x-x'$ is orthogonal to~$T_{x'}H$ by Fermat's principle. Hence,
$$
\nu^+_H(0)\boldsymbol{\cdot} x = \nu^+_H(0)\boldsymbol{\cdot} x' + d_H(x)\, \nu^+_H(0)\boldsymbol{\cdot}\nu^+_H(x').
$$ 
It remains to observe that for a $C^1$-hypersurface,
$\nu^+_H(0)\boldsymbol{\cdot} x'=o(|x'|)$ 
and $\nu^+_H(0)\boldsymbol{\cdot}\nu^+_H(x')=1+o(1)$ as $x'\to 0$
with $x'\in H$.
\end{proof}

If the hypersurface $H$ is of class $C^k$ with $k\ge 2$, there exists
a neighbourhood $V\supset H$ such that $d^{\,\uparrow}_H\in C^k(V)$, see, e.g.,~\cite[p.~155]{Fo}.

If $H$ is only $C^1$, then $d_H$ may fail to be differentiable on
a dense subset of $\R^m-H$, see~\cite{Sa}. Note, however, that if $x_0\in H$ and
$d_H$ is differentiable at $x\in \R^m-H$, then it follows from 
\eqref{dist-grad-hyp} and the triangle inequality that
\begin{equation}
\label{C1-on-hyp}
\bigl|\nabla d^{\,\uparrow}_H(x) - \nu_H^+(x_0)\bigr| 
\le \omega_{\nu_H}(x_0,2|x-x_0|),
\end{equation}
where $\omega_{\nu_H}$ is the modulus of continuity of the unit normals $\nu^\pm_H$.

\begin{prop}
\label{def-func}
Let $\Omega$ be a bounded pseudoconvex domain with $C^1$ boundary
in $\C^n$ or, more generally, in a Stein manifold. 
Then $\Omega$ has a $C^1$ defining function $\phi$
on a neighbourhood of $\conj\Omega$ such that $-\log(-\phi)$ 
is a smooth strictly psh function on~$\Omega$.
\end{prop}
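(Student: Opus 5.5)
We start from the boundary distance function. Write $\delta:=d_{\p\Omega}$ restricted to $\Omega$. Since $\Omega$ is pseudoconvex, $-\log\delta$ is psh on $\Omega$; in a Stein manifold we use the distance with respect to a suitable metric, or an embedding into $\C^N$ with a holomorphic retraction. In general it is only Lipschitz. To gain strict plurisubharmonicity, we fix a smooth strictly psh bounded exhaustion-type function $\psi$ on a neighbourhood of $\conj\Omega$, e.g.\ $\psi=|z|^2$, and set
$$
u:=-\log\delta+\eps\psi .
$$
This is a continuous strictly psh function on $\Omega$ tending to $+\infty$ at $\p\Omega$.

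The plan is to regularise $u$ by a Richberg-type scheme. Cover a collar $\{0<\delta<r_0\}$ by coordinate patches $B_j$. On each patch, convolve with mollifiers of radius $\eta_j(z)\ll\delta(z)$, comparable to a small fraction of $\delta$ on dyadic shells $\{2^{-k-1}<\delta<2^{-k}\}$. This produces smooth strictly psh local approximants $u_{j,k}$ with $|u_{j,k}-u|<c$. We then glue them by the regularised maximum $M_\eta$ of Example~\ref{reg_max}, after adding small cut-off corrections in the usual way (\cite[\S I.5]{D2}), so that on overlaps the maximum is attained away from the patch boundaries. Deep inside $\Omega$ we simply use any smooth strictly psh approximation. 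The result is a smooth strictly psh $v$ on $\Omega$ with $|v-u|$ bounded by a fixed small constant. Finally we put $\phi:=-e^{-v}$ on $\Omega$, $\phi:=0$ on $\p\Omega$, and outside $\Omega$ we take $\phi$ to be the signed distance, glued suitably. Then $-\log(-\phi)=v$ is smooth and strictly psh on $\Omega$.

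The main obstacle is showing that $\phi$ is $C^1$ up to $\p\Omega$ with $d\phi\ne0$ there. Near the boundary, $-\phi=e^{-v}\approx\delta\,e^{-\eps\psi}e^{O(c)}$, which gives only Lipschitz control. To get $C^1$ we need gradient control, and for this we run the argument on the gradient level. By \eqref{grad_lip_conv}, the gradient of a mollified $\delta$ (equivalently of $e^{-u}$) at a point $z$ is an average of $\nabla\delta$ over a ball of radius $\ll\delta(z)$. By \eqref{C1-on-hyp}, all these gradients lie within $\omega_{\nu}(x_0,O(\delta(z)))$ of $-\nu^+(x_0)$, the inward normal at a nearest boundary point. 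By \eqref{reg_max_grad}, the regularised maximum keeps gradients in the convex hull of these. It is therefore better to perform the convolution on $e^{-u}$ directly, or to track $\nabla(e^{-v})=-e^{-v}\nabla v$ through the chain rule, so that $\nabla(-\phi)\to e^{-\eps\psi}\cdot(\text{inward normal})$ as $z\to\p\Omega$, up to $o(1)$ errors. The terms involving $\nabla\psi$ and the constants $c$ add only $O(\delta)$ to the gradient.

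Combined with Lemma~\ref{dist-grad-on-hyp}, which gives the one-sided derivative at boundary points, this yields continuity of $d\phi$ across $\p\Omega$ with $d\phi\neq0$. The delicate bookkeeping is to choose the mollification radii and the additive constants in the gluing so that, on each shell, the constants stay uniformly small while the local approximants remain strictly psh and agree closely enough to keep the gradient estimate.
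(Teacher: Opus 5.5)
Your route is the paper's: Richberg-regularise $u=-\log d_{\p\Omega}+\eps|z|^2$, glue with the regularised maximum, set $\phi=-e^{-v}$, and control $\nabla\phi$ through \eqref{grad_lip_conv}, \eqref{C1-on-hyp} and \eqref{reg_max_grad}. However, the quantitative choice you make does not give a $C^1$ function. You only require $|v-u|\le c$ for a fixed small constant $c$, and you assert that these constants ``add only $O(\delta)$ to the gradient''. That is false. Writing $\delta=d_{\p\Omega}$ as you do,
$$
-\phi=e^{-v}=\delta\,e^{-\eps\psi}\,e^{-(v-u)},
\qquad
\nabla(-\phi)(z)\approx e^{-\eps\psi(z_0)}\,e^{-(v-u)(z)}\,\bigl(-\nu_{\p\Omega}(z_0)\bigr),
$$
so an additive error in $v$ enters multiplicatively, through a factor $e^{-(v-u)}\in[e^{-c},e^{c}]$. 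Nothing in your construction forces $v-u$ to have a limit at $\p\Omega$: the local errors and corrections vary from patch to patch, and the patches shrink towards the boundary. Hence $-\phi/\delta$ need not converge along the normal at $z_0$, and then $\phi$ is not even differentiable at $z_0$ by Lemma~\ref{dist-grad-on-hyp}. The correction bumps cause the same problem. A bump of height $h$ on a patch of radius $r\lesssim\delta$ has gradient of size about $h/r$. After multiplication by $e^{-v}\sim\delta$ it contributes roughly $h\delta/r\gtrsim h$ to $\nabla\phi$, which is $O(c)$, not $o(1)$. Likewise, a mollification radius equal to a fixed fraction $\theta$ of $\delta$ gives only $\delta(z)/\delta(z')=1+O(\theta)$ on the averaging ball, not $1+o(1)$.

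The missing idea is that all these errors must tend to zero at $\p\Omega$, and the paper builds this into the choice of constants. On each ball $B''_\alpha$, with $d_\alpha=\min_{\conj{B''_\alpha}}d_{\p\Omega}$, it takes $\delta_\alpha r_\alpha^2<d_\alpha/3$, hence $\eta_\alpha<d_\alpha/3$. It then chooses the mollification radius $\eps_\alpha$ so small that $u$ oscillates by less than $\eta_\alpha$ on $\eps_\alpha$-balls. This gives $u\le v\le u+d_{\p\Omega}$, hence $-\phi=e^{-\eps\psi}d_{\p\Omega}\bigl(1+O(d_{\p\Omega})\bigr)$ (the paper takes $\eps\psi=|z|^2$), with the same relative control on every averaging ball. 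The bump gradients are $O(\sqrt{d_{\p\Omega}})$ before the further factor $|\phi|$. Only with these choices does the convex-hull argument give $\nabla\phi(z)\to e^{-\eps\psi(z_0)}\nu_{\p\Omega}(z_0)$.

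A smaller point concerns the extension outside $\Omega$. Using the signed distance there does not give a $C^1$ function, because $d_{\p\Omega}$ may fail to be differentiable on a dense set off a $C^1$ hypersurface; its normal derivative $1$ also does not match $e^{-\eps\psi}$. The paper instead reflects $\phi$ across $\p\Omega$ in the $C^1$ collar of Lemma~\ref{normal_nbh}. For the Stein manifold case, the embedding-and-retraction route you mention is the one the paper uses.
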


\begin{proof}
1) Let us consider an {\it arbitrary\/} pseudoconvex domain $\Omega\subsetneq\C^n$.
We will construct a particular regularisation of the function 
$$\psi(z):=-e^{-|z|^2}d_{\p\Omega}(z), \quad z\in \conj{\Omega},
$$ 
by applying Richberg's approximation as in~\cite[Theorem~I.5.21]{D2} to 
the continuous strictly psh function
$$
\wt{\psi}(z):= -\log(-\psi(z)) = -\log d_{\p\Omega}(z) + |z|^2, \quad z\in\Omega.
$$
(Note that $-\log d_{\p\Omega}$ is psh in the pseudoconvex domain $\Omega$ by~\cite[\S 2.6]{H}.)

Choose $z_\alpha\in\Omega$ 
and $0<r'_\alpha<r''_\alpha< d_{\p\Omega}(z_\alpha)$
so that the open balls $B'_\alpha=B(z_\alpha,r'_\alpha)$ cover $\Omega$
and the covering of $\Omega$ by the closed balls
$\conj{B''_\alpha}=\conj{B(z_\alpha,r''_\alpha)}$
is locally finite. 

For every $\alpha$, let
\begin{equation}
\label{d-choice}
d_\alpha: = \min\limits_{z\in \conj{B''_\alpha}} d_{\p\Omega}(z)>0.
\end{equation}
Define the intermediate radius 
$r_\alpha\in (r'_\alpha, r''_\alpha)$ by the condition
\begin{equation}
\label{r-choice}
\frac{{r''_\alpha}^2 - {r_\alpha}^2}{2} =
{r_\alpha}^2 - {r'_\alpha}^2. 
\end{equation}
Choose $\delta_\alpha\in (0,1)$ so that
\begin{equation}
\label{delta-choice}
\delta_\alpha {r_\alpha}^2 < \frac{1}{3}d_\alpha 
\end{equation}
and set
\begin{equation}
\label{eta-choice}
\eta_\alpha:= \delta_\alpha \frac{{r''_\alpha}^2 - {r_\alpha}^2}{2}
= \delta_\alpha ({r_\alpha}^2 - {r'_\alpha}^2)
< \frac{1}{3}d_\alpha .
\end{equation}
Next choose $\eps_\alpha\in (0,d_\alpha)$ so that
\begin{equation}
\label{eps-choice}
\bigl|\wt{\psi}(z')-\wt{\psi}(z)\bigr| < \eta_\alpha
\end{equation}
for all $z\in\conj{B''_\alpha}$ and $|z'-z|<\eps_\alpha$.

Let $\chi_\eps(z)=\eps^{-2n}\chi(\frac{z}{\eps})$ for $\eps>0$ 
and a spherically symmetric mollifier $\chi$ supported 
in the unit ball in $\C^n$. 
By standard properties of psh functions~\cite[\S I.5.A]{D2}
or~\cite[\S 2.6]{H}, the convolution
$\wt{\psi}\star \chi_{\eps_\alpha}$ is $\ge\wt{\psi}$ 
on $\conj{B''_\alpha}$ 
and its Levi form is bounded below by the Levi form of~$|z|^2$. 
Therefore, 
\begin{equation}
\label{phi-alpha}
\wt{\phi}_\alpha(z):= \wt{\psi}\star \chi_{\eps_\alpha}(z) + \delta_\alpha ({r_\alpha}^2 - |z-z_\alpha|^2)
\end{equation} 
is a smooth strictly psh function on $\conj{B''_\alpha}$ such that 
$$
\wt{\phi}_\alpha(z) \ge \wt{\psi}(z) + \delta_\alpha ({r_\alpha}^2 - |z-z_\alpha|^2).
$$
By~\eqref{eps-choice} and elementary properties of the convolution, 
we also get
$$
\wt{\phi}_\alpha(z) \le \wt{\psi}(z) +\eta_\alpha + 
\delta_\alpha ({r_\alpha}^2 - |z-z_\alpha|^2)
\text{ for }z\in \conj{B''_\alpha}.
$$
Hence, using~\eqref{eta-choice} and~\eqref{delta-choice}, we see that
\begin{align}
\wt{\phi}_\alpha > \wt{\psi}+\eta_\alpha     & \;\text{ on }   B'_\alpha, \label{passing-eta1} \\ 
\wt{\phi}_\alpha \le \wt{\psi} - \eta_\alpha & \;\text{ on } \p B''_\alpha, \label{passing-eta2} \\
\wt{\phi}_\alpha +\eta_\alpha \le \wt{\psi} + d_\alpha & \;\text{ on } B''_\alpha. \label{upper-bound-alpha}
\end{align}

For $z\in\Omega$, let 
$\{\alpha_{1\vphantom{)}},\ldots,\alpha_{m(z)}\}=\{\alpha\mid z\in B''_\alpha\,\}
$
and define 
$$
\wt{\phi}(z):=
M_{(\eta_{\alpha_{1\vphantom{)}}},\ldots,\,\eta_{\alpha_{m(z)}})}\bigl(\wt{\phi}_{\alpha_{1\vphantom{)}}}(z),\ldots,\wt{\phi}_{\alpha_{m(z)}}(z)\bigr)
$$
with $M_\eta$ the regularised maximum function on $\R^{m(z)}$ from Example~\ref{reg_max}.
Estimates \eqref{passing-eta1} and~\eqref{upper-bound-alpha}
imply that
\begin{equation}
\label{bound-phi-tilde}
\wt{\psi}(z) \le \wt{\phi}(z) \le \wt{\psi}(z) + d_{\p\Omega}(z).
\end{equation}
From \eqref{passing-eta1} and \eqref{passing-eta2}, we infer that
\begin{equation}
\label{local-phi-tilde}
\wt{\phi}(z')=
M_{(\eta_{\alpha_{1\vphantom{)}}},\ldots,\,\eta_{\alpha_{m(z)}})}\bigl(\wt{\phi}_{\alpha_{1\vphantom{)}}}(z'),\ldots,\wt{\phi}_{\alpha_{m(z)}}(z')\bigr)
\end{equation}
for all $z'$ in a sufficiently small open neighbourhood of $z$. 
Consequently, $\wt{\phi}$ is a smooth strictly psh function on $\Omega$
by~\cite[Lemma~I.5.18(e)]{D2}.

We may now define a function $\phi\in C^\infty(\Omega)\cap C^{0}(\conj{\Omega})$ by setting
$$
\phi:=\left\{
\begin{array}{ll}
-\exp(-\wt{\phi}) & \text{ on } \Omega,\\
0 & \text{ on } \p\Omega.
\end{array}
\right.
$$
Inequalities~\eqref{bound-phi-tilde} show that
\begin{equation}
\label{phi-at-boundary}
\begin{array}{rcl}
\phi(z) & = & \psi(z)\,\bigl(1+O(d_{\p\Omega}(z))\bigr)\\[2pt]
& = & -e^{-|z|^2}d_{\p\Omega}(z) + o(d_{\p\Omega}(z)) \;\text{ as } z\to\p\Omega.
\end{array}
\end{equation}

\smallskip
\noindent
2) Suppose next that $\Omega\subsetneq\C^n$ has $C^1$ boundary at $z_0\in\p\Omega$. 
Then $d_{\p\Omega}$ coincides with the
distance to a $C^1$ hypersurface in a neighbourhood of $z_0$. 
In particular, it follows from Lemma~\ref{dist-grad-on-hyp} and~\eqref{phi-at-boundary}
that $\phi$ has a one-sided gradient 
\begin{equation}
\label{grad-phi-z0}
\nabla_{\conj{\Omega}}\, \phi (z_0) = e^{-|z_0|^2}\nu_{\p\Omega}(z_0),
\end{equation}
where $\nu_{\p\Omega}$ is the outer unit normal to $\p\Omega$.

Let us show that 
$$
\nabla\phi(z) \to e^{-|z_0|^2}\nu_{\p\Omega}(z_0) 
\;\text{ as } \Omega\ni z\to z_0\in\p\Omega.
$$
From formulas \eqref{local-phi-tilde} and~\eqref{reg_max_grad}, we obtain 
\begin{equation}
\label{grad-phi-in-conv}
\nabla\phi(z)=-\phi(z)\nabla \wt{\phi}(z) 
\in \mathop{\mathrm{conv}} \bigl\{-\phi(z)\nabla \wt{\phi}_\alpha(z)\bigr\}_{\{\alpha\,\mid\, z\in B''_\alpha\,\}}.
\end{equation}
Applying~\eqref{grad_lip_conv} to~\eqref{phi-alpha}
shows that
\begin{equation}
\label{grad_phi_alpha}
\nabla \wt{\phi}_\alpha(z) = -\frac{\nabla\psi}{\psi}\star \chi_{\eps_\alpha}(z) - \delta_\alpha \nabla |z-z_\alpha|^2.
\end{equation}
The norm of the last term is bounded by
$$
2\delta_\alpha|z-z_\alpha|
\le 2\delta_\alpha r''_\alpha 
\overset{\eqref{r-choice}}{<} 2\sqrt{3}\delta_\alpha r_\alpha 
\underset{\delta_\alpha<1}{\overset{\eqref{delta-choice}}{<}} 2\sqrt{d_\alpha}
\overset{\eqref{d-choice}}{\le} 2\sqrt{d_{\p\Omega}(z)}.
$$
The integral in the convolution is taken over the ball centred at $z$ 
of radius $\eps_\alpha<d_\alpha\le d_{\p\Omega}(z)$.
For $z'$ in this ball, $|z'-z_0|\le 2|z-z_0|$.
Hence, once $z$ is close enough to $z_0$, it follows from~\eqref{C1-on-hyp} that
$$
\begin{array}{rcl}
\bigl|\nabla\psi(z') - e^{-|z_0|^2}\nu_{\p\Omega}(z_0)\bigr| &
\le &
e^{-|z_0|^2}\bigl|-\nabla d_{\p\Omega}(z')-\nu_{\p\Omega}(z_0)\bigr| \\[2pt]
&& + \bigl|e^{-|z_0|^2}-e^{-|z'|^2}\bigr|\cdot\bigl|\nabla d_{\p\Omega}(z')\bigr| \\[2pt]
&&+ d_{\p\Omega}(z')\bigl|\nabla e^{-|z'|^2}\bigr| \\[2pt]
&{\le} &
\omega_{\nu_{\p\Omega}}(z_0,4|z-z_0|) + {\mathrm{const}}\cdot|z-z_0|
\end{array}
$$
for all $z'\in B(z,\eps_\alpha)$ at which $d_{\p\Omega}$ is differentiable,
which implies a.\,e.\ by Rademacher's theorem. (The constant is absolute
since $|\nabla d_{\p\Omega}(z')|=1$ and 
$\bigl|\nabla e^{-|\zeta|^2}\bigr|\le \sqrt{2/e}$ for all $\zeta\in\C^n$.)
Furthermore, 
$$
\psi(z')= \psi(z)\bigl(1+O(d_{\p\Omega}(z))\bigr)
$$
by~\eqref{eps-choice} and~\eqref{eta-choice}.
Recalling also that $\phi(z)=\psi(z)\bigl(1+O(d_{\p\Omega}(z))\bigr)$ by~\eqref{phi-at-boundary}, 
we conclude from~\eqref{grad_phi_alpha} that 
$$
\left|-\phi(z)\nabla \wt{\phi}_\alpha(z) - e^{-|z_0|^2}\nu_{\p\Omega}(z_0)\right|
$$
is bounded from above by a function of $z$ tending to zero as $z\to z_0$
independent of $\alpha$. Thus, our claim about the continuity of
the gradient of $\phi$ at $z_0$ follows from~\eqref{grad-phi-in-conv}.

\smallskip
\noindent
3) Let $S\subseteq \p\Omega$ be the (relatively open) subset of points at which
$\Omega\subsetneq\C^n$ has $C^1$ boundary. 
By Lemma~\ref{normal_nbh}, a neighbourhood of $S$ 
is $C^1$ diffeomorphic to a neighbourhood of $S\times\{0\}$ in $S\times\R_t$
so that $t<0$ inside $\Omega$.
Define $\phi(z_0,t):=-\phi(z_0,-t)$ for $z_0\in S$
and $t>0$ small enough (depending on $z_0$). 
This extends $\phi$ to a $C^1$ function on a neighbourhood of $\Omega\sqcup S$ 
with the gradient at $z_0\in S$ given by~\eqref{grad-phi-z0}, 
which completes the proof of the proposition for bounded pseudoconvex domains 
with $C^1$ boundary in $\C^n$.

\smallskip
\noindent
4) If $\Omega$ is a domain in a Stein manifold $M$, 
we follow part 1) of the proof of \cite[Lemma 1]{DF}. 
Embed $M$ properly in~$\C^n$ 
and take a 
Stein neighbourhood $W\supset M$ 
with a holomorphic retraction $\pi:W\to M$, see~\cite[\S 3.3]{For}.
The pre-image $\check{\Omega}=\pi^{-1}(\Omega)$ 
is a pseudoconvex domain in $\C^n$. 
Clearly, $\check{\Omega}$ has $C^1$ boundary at every point 
in $\p\check{\Omega}\cap M=\p_M\Omega$
and $\p\check{\Omega}$ is transverse to $M$. 
By part 3), there exists a $C^1$ function $\phi$ on a neighbourhood 
of $\check{\Omega}\sqcup (\p\check{\Omega}\cap M)$ 
vanishing with non-zero gradient on~$\p\check{\Omega}$ near $M$
and such that $-\log(-\phi)$ is smooth and strictly psh in $\check{\Omega}$.
The~restriction of $\phi$ to $M$ is the required 
defining function for~$\Omega$.
\end{proof}

\begin{rem}
\label{def-func-connected}
The set of defining functions satisfying the conclusions of Proposition~\ref{def-func}
for a given $\Omega$ is connected. Indeed, suppose that $\phi$ and $\varpi$ are two such functions.
Then $\varpi=e^f\phi$ on a neighbourhood $V\supset\p\Omega$ for a $C^1$ function~$f$.
For $s\in[0,1]$, define
$$
\phi_s:=\left\{
\begin{array}{ll}
-(-\varpi)^s(-\phi)^{1-s} & \text{ on } \Omega,\\
e^{sf}\phi & \text{ on } V.
\end{array}
\right.
$$
Then $\phi_0=\phi$, $\phi_1=\varpi$, 
and $\phi_s$ is a $C^1$ defining function for $\Omega$ 
such that $-\log(-\phi_s)$ is smooth and strictly psh in~$\Omega$
for every $s\in [0,1]$.
\end{rem}

\begin{rem}
Since $C^1$ boundaries are Lipschitz, \cite[Theorem 1.1]{Ha}
provides a Lipschitz defining function $\phi$ for $\Omega$ such that
$-(-\phi)^\eta$ is strictly psh in $\Omega$ for some $\eta>0$. 
One may ask whether the proof of Proposition~\ref{def-func} could be combined 
with the methods of~\cite{Ha} 
to obtain an exact analogue of~\cite{DF} for pseudoconvex domains with $C^1$ boundary.
\end{rem}

\subsection{Domains with polynomially or rationally convex closure}
The short direct proof of the following result was pointed out to us by Alexander Izzo.

\begin{lem}
\label{izzo-boundary}
Let $\Omega\subset\C^n$ be a domain with $C^1$ boundary
such that $\conj{\Omega}$ is polynomially or rationally convex.
If $K\Subset\Omega$, then its polynomially or, respectively,
rationally convex hull $\wt{K}$ is contained in~$\Omega$.
\end{lem}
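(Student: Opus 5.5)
Since $K\subset\conj{\Omega}$ and $\conj{\Omega}$ is convex in the relevant sense, we have $\wt{K}\subseteq\conj{\Omega}$. So it suffices to show that $\wt{K}\cap\p\Omega=\varnothing$. I would argue by contradiction, using the local maximum principle (Theorem~\ref{max-phull} in the polynomial case, Theorem~\ref{max-rhull} in the rational case) together with a local peak-type function at a boundary point. The $C^1$ hypothesis will be used only to produce such a function locally.

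Suppose $p\in\wt{K}\cap\p\Omega$. Let $\nu$ be the outer unit normal to $\p\Omega$ at $p$. Since the boundary is $C^1$, there is a small $r>0$ such that $\conj{\Omega}\cap\conj{B(p,r)}$ lies in a cone around $-\nu$: for every $\eps>0$ we may shrink $r$ so that
\[
\Re\langle z-p,\nu\rangle\le \eps|z-p| \quad\text{for } z\in\conj{\Omega}\cap\conj{B(p,r)}.
\]
Choose $r$ also so small that $\conj{B(p,r)}\cap K=\varnothing$, which is possible because $K\Subset\Omega$. Next consider the pluriharmonic (hence psh) function
\[
\phi(z)=\Re\langle z-p,\nu\rangle+\tfrac12|z-p|^2 \cdot c
\]
with a suitable sign convention; more simply, take $\phi(z)=\Re\langle z-p,\nu\rangle+\delta|z-p|^2$ with $\delta>0$ small. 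At $p$ we have $\phi(p)=0$. On $\p B(p,r)\cap\conj{\Omega}$ we instead need $\phi<0$. This requires $\Re\langle z-p,\nu\rangle\le-c|z-p|$ there, which the cone estimate does not give: the points of $\p\Omega$ near $p$ have $\Re\langle z-p,\nu\rangle\approx 0$.

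So I would replace the linear function by one whose values strictly decrease into $\conj{\Omega}$ away from $p$. The cleanest choice is the strictly psh function
\[
\phi(z)=\Re\langle z-p,\nu\rangle+|z-p|^2\cdot 0+\ldots
\]
but that approach fails. The better route is the strict local maximum principle of Remark~\ref{strict-max}. Take $\phi(z)=\Re\langle z-p,\nu\rangle$ and $U=B(p,r)$. Theorem~\ref{max-phull} (or~\ref{max-rhull}) together with Remark~\ref{strict-max} applied to $\phi+\delta|z-p|^2$ gives a point $q\in\p U\cap\wt{K}$ with $\Re\langle q-p,\nu\rangle>-\delta r^2$.

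Iterating this, or using it alone, only controls the normal direction, so it does not yet contradict anything. The real contradiction must therefore exploit the \emph{outside} of $\conj{\Omega}$. Here is the approach I would actually carry out. Translate $\Omega$ slightly in the direction $\nu$, i.e. look at the points $p_t=p+t\nu\notin\conj{\Omega}$ for small $t>0$. Because $\conj{\Omega}$ is convex in the relevant sense and $p_t\notin\conj{\Omega}$, there is a polynomial or rational function $f_t$ with $|f_t(p_t)|>\max_{\conj{\Omega}}|f_t|$. This alone does not separate $p$ from $K$. Instead, I would use the $C^1$ structure to construct a holomorphic change of coordinates, namely a small translation $z\mapsto z-t\nu$ near $p$, which maps $K$ into $\Omega$ (this holds because $K\Subset\Omega$) while keeping $\wt{K}-t\nu\ni p-t\nu$.

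The step I expect to be the main obstacle is the following. Hulls are translation equivariant, so $\wt{K}+t\nu=\widetilde{K+t\nu}$. For small $t$ we have $K+t\nu\subset\Omega\subseteq\conj{\Omega}$, and hence $\widetilde{K+t\nu}\subseteq\conj{\Omega}$. Therefore $p+t\nu\in\conj{\Omega}$ for all small $t>0$. This contradicts the fact that $\nu$ is the outer normal at the $C^1$ boundary point $p$, because $p+t\nu\notin\conj{\Omega}$ for small $t$. This translation argument is the proof I would write. The only care needed is that $t$ must be small enough that $K+t\nu\subset\Omega$, which is possible since $K$ is compact in the open set $\Omega$, and that $\nu$ is a genuine outward direction at the $C^1$ point $p$.
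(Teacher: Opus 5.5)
Your final translation argument is correct and is essentially the paper's proof: translation equivariance and monotonicity of the hull give $p+t\nu\in\widetilde{K+t\nu}\subseteq\conj{\Omega}$ for small $t>0$, which contradicts the fact that $\nu$ points outward at the $C^1$ boundary point $p$. The exploratory paragraphs before it (local maximum principle, peak-type functions) play no role and should be dropped from a written proof.
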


\begin{rem}
The domain $\Omega$ in the lemma is pseudoconvex (see the beginning
of the proof of Theorem~\ref{spsc-approx} below) and therefore hyperconvex by~\cite{KR}.
So the statement is a special case of Theorem~\ref{mainthmB}.
\end{rem}

\begin{proof}[Proof of the lemma]
Suppose that $z_0\in \wt{K}\cap\p\Omega\ne\varnothing$.
If $\tau$ is a small translation of $\C^n$ in the direction of the outer normal $\nu_{\p\Omega}(z_0)$,
then $\tau(K)\Subset\Omega$ but 
$\wt{\tau(K)}=\tau(\wt{K})\not\subset\conj{\Omega}=\wt{\conj{\Omega}}$,
which is clearly impossible.
\end{proof} 

\begin{cor}
\label{convex-closures-n}
Let $\Omega\subset\C^n$ be a domain with $C^1$ boundary
such that its closure is polynomially or rationally convex.
If $\psi$ is a psh function on $\Omega$ and $K_\psi:=\{\psi\le 0\}\Subset\Omega$,
then this compact set is polynomially or, respectively, rationally convex.
\end{cor}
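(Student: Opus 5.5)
Let $\wt{K}$ denote the polynomially or, respectively, rationally convex hull of $K_\psi$. The plan is first to place $\wt{K}$ inside $\Omega$, and then to apply the local maximum principle to $\psi$ on the part of the hull lying outside $K_\psi$.

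Since $K_\psi\Subset\Omega$, Lemma~\ref{izzo-boundary} gives $\wt{K}\subset\Omega$. This relies on the hypothesis that $\conj{\Omega}$ is polynomially or rationally convex and has $C^1$ boundary. So $\wt{K}$ is a compact subset of $\Omega$, and $\psi$ is psh near it, which is all we need below.

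Now suppose, for a contradiction, that there is a point $p\in\wt{K}$ with $\psi(p)>0$. Let $c:=\max_{\wt{K}}\psi$. This maximum exists if $\psi$ is continuous; otherwise $\psi$ is upper semicontinuous and still attains its maximum on the compact set $\wt{K}$. We have $c\ge\psi(p)>0$, and we may take $p$ to be a point where the maximum is attained.

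Take an open set $U\Subset\Omega$ that contains $\wt{K}\cap\{\psi\ge c/2\}$, avoids $K_\psi$, and has closure inside $\{\psi>c/4\}$. This is possible because the compact set $\wt{K}\cap\{\psi\ge c/2\}$ is disjoint from $\{\psi\le 0\}$. In the upper semicontinuous case the set $\{\psi>c/4\}$ is still open, so the choice works. Apply Theorem~\ref{max-phull} or Theorem~\ref{max-rhull} to the compact set $K_\psi$, the open set $U$, and the psh function $\psi$ on $\conj U$. This gives
$$
c=\psi(p)\le\max_{\p U\cap\wt{K}}\psi .
$$
Points of $\p U\cap\wt{K}$ lie outside $U$, hence outside $\{\psi\ge c/2\}$, so $\psi<c/2$ there. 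The maximum of an upper semicontinuous function over this compact set is therefore also $<c/2$, which contradicts $c>0$. Hence $\psi\le 0$ on $\wt{K}$, so $\wt{K}\subseteq K_\psi$, and $K_\psi$ is convex in the required sense.

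The point that needs care is that Theorems~\ref{max-phull} and~\ref{max-rhull} require $\phi$ to be psh on $\conj U$, that is, on a neighbourhood of $\conj U$. This is exactly where $\wt{K}\subset\Omega$ is used: because $U\Subset\Omega$, $\psi$ is defined and psh near $\conj U$. Without Lemma~\ref{izzo-boundary}, the hull could meet $\p\Omega$, where $\psi$ is not defined, and the argument would fail.
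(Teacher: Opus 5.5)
Your proof is correct and follows the paper's route. Lemma~\ref{izzo-boundary} places $\wt{K}$ inside $\Omega$, and the local maximum principle (Theorem~\ref{max-phull} or~\ref{max-rhull}) applied to $\psi$ on an open set avoiding $K_\psi$ gives the contradiction. The paper's choice of $U$ is simpler: it takes $U=\Omega'-K_\psi$ with $\wt{K}\subset\Omega'$ and $\conj{\Omega'}\subset\Omega$, so that $\p U\cap\wt{K}\subseteq K_\psi$ and $\psi\le 0$ there directly, without the maximum $c$.

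One small slip: for an upper semicontinuous $\psi$ the set $\{\psi>c/4\}$ need not be open, since only the sublevel sets $\{\psi<a\}$ are. You never use $\conj{U}\subset\{\psi>c/4\}$, because $U\cap K_\psi=\varnothing$ already follows from the disjointness of the compact sets $\wt{K}\cap\{\psi\ge c/2\}$ and $K_\psi$. So simply drop that requirement.
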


\begin{proof}
The polynomially/rationally convex hull $\wt{K}_\psi$ of $K_\psi$ 
is contained in $\Omega$ by Lemma~\ref{izzo-boundary}. 
Take an open set $\Omega'\supset\wt{K}_\psi$ 
so that $\conj{\Omega'}\subset\Omega$ and let $U:=\Omega'-K_\psi$.
If $\wt{K}_\psi\ne K_\psi$, then $\psi$
is positive at some point $p\in U \cap \wt{K}_\psi$.
This contradicts Theorems~\ref{max-phull} and~\ref{max-rhull}
because $\p U\cap \wt{K}_\psi \subseteq K_\psi$.
\end{proof}

Now we can prove the result required for the applications 
to the geometric topology of polynomially and rationally convex sets.

\begin{thm}
\label{spsc-approx}
Let $\Omega\subset\C^n$ be a domain with $C^1$ boundary
such that its closure is polynomially or rationally convex. 
Then $\Omega$ is isotopic 
{\rm (}by an arbitrarily small ambient $C^1$ isotopy\/{\rm )} 
to a smoothly bounded strictly pseudoconvex domain 
$\Omega'\subset\Omega$ with polynomially or, respectively, 
rationally convex closure.
\end{thm}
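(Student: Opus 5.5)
The plan has three steps: show $\Omega$ is pseudoconvex, take a sublevel set of the defining function from Proposition~\ref{def-func}, and deduce convexity of its closure from Corollary~\ref{convex-closures-n}.

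\emph{Pseudoconvexity and boundedness.} Since $\conj{\Omega}$ is compact, $\Omega$ is bounded. For pseudoconvexity I would use Lemma~\ref{izzo-boundary}, which gives $\wt{K}\subset\Omega$ for every $K\Subset\Omega$, where $\wt{K}$ denotes the polynomially or rationally convex hull. Then $\Omega$ is holomorphically convex: $\wt{K}$ is compact in $\Omega$, and it contains the $\OO(\Omega)$-hull of $K$, because polynomials (respectively rational functions regular near $\conj{\Omega}$) are holomorphic on $\Omega$. To make this rigorous I would run a continuity-principle argument. If $\Omega$ were not pseudoconvex, there would be a family of analytic discs with boundaries in a fixed compact $K\Subset\Omega$ escaping to $\p\Omega$. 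By the maximum principle for polynomials, or for rational functions holomorphic near the discs, each disc lies in $\wt{K}$. This contradicts $\wt{K}\Subset\Omega$.

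\emph{Choice of $\Omega'$.} Proposition~\ref{def-func} gives a $C^1$ defining function $\phi$ on a neighbourhood of $\conj{\Omega}$ with $\wt{\phi}=-\log(-\phi)$ smooth and strictly psh in $\Omega$; also $d\phi\ne 0$ on $\p\Omega$. By continuity of $d\phi$, $\phi$ has no critical points in a collar $\{-c_0\le\phi\le 0\}$. For a small regular value $-c$, set
$$
\Omega':=\{\phi<-c\}=\{\wt{\phi}<-\log c\}.
$$
This is smoothly bounded because $\phi$ is smooth in $\Omega$, and strictly pseudoconvex with smooth strictly psh defining function $\wt{\phi}+\log c$. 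By Sard's theorem regular values are generic, and in the collar every value is regular anyway. The ambient isotopy comes from flowing along a $C^1$ vector field $X$ on the collar with $d\phi(X)=1$, for instance $\nabla\phi/|\nabla\phi|^2$ cut off outside the collar. Time $c$ of this flow carries $\{\phi\le -c\}$ onto $\conj{\Omega}$, and it is $C^1$-small as $c\to 0$. I would also check that $\Omega'$ is connected, which holds because the flow identifies it with $\Omega$.

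\emph{Convexity of the closure.} Here I would apply Corollary~\ref{convex-closures-n} to the psh function $\psi:=\wt{\phi}+\log c$. Its sublevel set $K_\psi=\{\psi\le 0\}=\{\phi\le -c\}$ is compact in $\Omega$, and it equals $\conj{\Omega'}$ because $-c$ is a regular value. The corollary then says $\conj{\Omega'}$ is polynomially, respectively rationally, convex. I expect the main obstacle to be the pseudoconvexity step, namely justifying it cleanly for $C^1$ domains whose closure is convex; the rest follows directly from the earlier results.
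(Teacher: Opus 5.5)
Your second and third steps are the paper's: $\Omega'=\{\phi<-c\}$ for the defining function of Proposition~\ref{def-func}, then Corollary~\ref{convex-closures-n} with $\psi=-\log(-\phi/c)$. The gap is the pseudoconvexity step in the \emph{rational} case, which Proposition~\ref{def-func} requires.

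It is false that $\rh{K}$ contains the $\OO(\Omega)$-hull of $K$. It is also false that an analytic disc with boundary in $K$ lies in $\rh{K}$. Take $\Omega$ the unit disc in $\C$ (or the unit ball in $\C^n$, with $K$ in a complex line) and $K=\{|\zeta|=1/2\}$. Then $\rh{K}=K$, yet the disc $\{|\zeta|<1/2\}$ has boundary in $K$ and lies in the $\OO(\Omega)$-hull of $K$. The rational functions defining $\rh{K}$ need only be regular on $K$, so they may have poles in $\Omega$ and on your discs. Rational functions regular near $\conj{\Omega}$ define a hull containing $\rh{K}$, in general strictly, and Lemma~\ref{izzo-boundary} says nothing about it. In the polynomial case your argument is fine, since $\wh{K}_{\OO(\Omega)}\subseteq\wh{K}\Subset\Omega$.

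The paper avoids hulls of $K$ altogether. If $\conj{\Omega}$ is rationally (in particular polynomially) convex, every $z\notin\conj{\Omega}$ is a zero of a polynomial $P$ with no zeros on $\conj{\Omega}$. Hence $\conj{\Omega}$ is the intersection of the pseudoconvex sets $\C^n-\{P=0\}$. The interior of such an intersection is pseudoconvex (Corollary~2.5.7 in H\"ormander's book), since $-\log$ of its boundary distance is the supremum of the corresponding functions for $\C^n-\{P=0\}$. Finally $\mathring{\conj{\Omega}}=\Omega$ because $\p\Omega$ is $C^1$.

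If you want to keep a disc argument, use the argument principle rather than the maximum principle. Such a $P$ cannot acquire a zero along a continuous family of closed discs whose boundaries stay in $\conj{\Omega}$. A small outward translation at a touching point, as in Lemma~\ref{izzo-boundary}, then yields the continuity principle.

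A second, more technical, problem is the isotopy. Proposition~\ref{def-func} only gives $\phi\in C^1$ across $\p\Omega$, so $\nabla\phi/|\nabla\phi|^2$ is merely continuous there. Its integral curves need not be unique, let alone form a $C^1$ flow, and for a general $C^1$ function there may be no $C^1$ field with $d\phi(X)=1$ at all. The paper instead uses Lemma~\ref{normal_nbh} and the implicit function theorem: the level sets $\{\phi=-s\}$, $0\le s\le\eps$, are graphs $t=g(x,s)$ over $\p\Omega$ in a $C^1$ collar, with $g$ of class $C^1$ in $(x,s)$. It then applies the $C^1$ isotopy extension theorem. Alternatively, you can write the ambient isotopy explicitly as $(x,t)\mapsto(x,t+\beta(t)g(x,s))$ in the collar, where $\beta$ is a cut-off equal to $1$ near $t=0$ and $s$ is small.
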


\begin{proof}
A polynomially or rationally convex subset is the
intersection of the complements of algebraic hypersurfaces 
avoiding this subset and therefore its interior is 
pseudoconvex by~\cite[Corollary 2.5.7]{H}. Hence, $\Omega$
is pseudoconvex, cf.\ Remark~\ref{sb-properties}(2).

Let $\phi$ be a $C^1$ defining function for $\Omega$ such that $-\log(-\phi)$ 
is smooth and strictly psh in $\Omega$ provided by Proposition~\ref{def-func}.
For $\eps>0$ small enough, 
the family of real hypersurfaces $\{\phi= -s\}$ with $s\in [0,\eps]$
is a $C^1$~isotopy by Lemma~\ref{normal_nbh}
and the implicit function theorem. Hence, the smoothly bounded
strictly pseudoconvex domain $\Omega':=\{\phi< -\eps\}$ 
is $C^1$ isotopic to $\Omega$ in $\C^n$ by the isotopy extension theorem
(see \cite{B} for a detailed proof of the latter in the $C^1$ case).

$\conj{\Omega}{}'=\{-\log(-\phi)\le -\log\eps\}\Subset\Omega$
is polynomially or, respectively, rationally convex by Corollary~\ref{convex-closures-n}
with $\psi=-\log(-\phi/\eps)$.
\end{proof}

\begin{rem}
\label{spsc-connected}
It follows from Remark~\ref{def-func-connected} that 
different choices of the defining function $\phi$ 
in the proof of Theorem~\ref{spsc-approx} 
produce domains~$\Omega'$ that are smoothly isotopic 
through strictly pseudoconvex subdomains in $\Omega$ 
with polynomially/rationally convex closure. 
\end{rem}

\begin{rem}
\label{c-infty}
If $\Omega$ has $C^k$ boundary with $2\le k\le \infty$, the proof 
of Theorem~\ref{spsc-approx} can be considerably shortened. 
By \cite[Theorem~1]{DF} there exists a $C^k$ defining 
function $\rho$ on a neighbourhood $U\supset\conj{\Omega}$ 
such that, for $\eta>0$ sufficiently small, 
$\wt{\rho}=-(-\rho)^\eta$ is strictly psh in~$\Omega$. 
The latter is an open condition on $\rho$ in the 
Whitney topology on $C^k(\Omega)$ for $k\ge 2$.
Since smooth functions are dense in the Whitney topology on $C^k(\Omega)$, 
there is a smooth function $\sigma<0$ on $\Omega$ such that
$-(-\sigma)^\eta$ is strictly psh and 
$\bigl|\mathcal{D}\sigma(z) -\mathcal{D}\rho(z)\bigr|\to 0$ 
as $z\to\p\Omega$ for all (real) higher-order partial derivatives $\mathcal{D}$ 
of order~$\le k$.
Hence, if we define 
$$
\phi:=\left\{
\begin{array}{ll}
\sigma & \text{ on } \Omega,\\
\rho & \text{ on } U-\Omega,
\end{array}
\right.
$$
then $\phi$ is a $C^k$ defining function for $\Omega$ 
such that already $-(-\phi)^\eta$ 
is smooth and strictly psh in $\Omega$.
Now set $\Omega'=\{\phi<-\eps\}$ for small enough $\eps>0$
and apply Corollary~\ref{convex-closures-n}.
\end{rem}

\section{Domains with simple boundary}

\subsection{Definition and general properties}
\label{subsec-simple-boundary}
We will consider a class of domains with sufficiently nice 
$C^0$-boundaries in complex manifolds.

\begin{df}
\label{simple-boundary}
A domain $\Omega$ has {\it simple boundary\/} if for every $p\in\p\Omega$
there exist local holomorphic coordinates $(z_1,...,z_{n-1},w)$, $w=u+iv$, 
centred at $p$ and a neighbourhood of $p=(0,\ldots,0)$ of the form 
$$
U = B\times I,
$$
where $B\subset\C_z^{n-1}\times\R_u$ is an open Euclidean ball and $I=(v_-,v_+)\subset\R_v$ is an open interval,
such that
$$
\Omega\cap U = \{ (z,w)\in U \mid v < F(z,u) \} 
$$
for a continuous function $F:\conj{B}\to I$.
\end{df}

\begin{rem} 
\label{sb-properties}
Note the following easy consequences of the definition:
\begin{enumerate}
\item A domain with $C^k$-boundary, $1\le k\le\infty$, 
has simple boundary by the implicit function theorem.
\item $\Omega=\mathring{\conj{\Omega}}$.
\end{enumerate}
\end{rem}

Another immediate corollary of Definition~\ref{simple-boundary} is  
a biholomorphically invariant version of the {\it segment property\/}:
For every boundary point $p\in\p\Omega$, there exist a non-vanishing
holomorphic real vector field $X$ near $p$ and a neighbourhood $W\ni p$ such that 
$e^{\eps X}(W\cap\conj{\Omega})\subset\Omega$
for all $\eps>0$ small enough. (Just take $X=-\p/\p v$ for 
a coordinate system from Definition~\ref{simple-boundary}.) 
Conversely, it follows from~\cite[Theorem 3.3]{F}
that a domain with this property has simple boundary.
 
The following Mergelyan-type theorem for continuous psh
functions on bounded domains with simple boundary in~$\C^n$
was obtained in~\cite{AHP}.
(The case of $C^1$-smooth boundaries was treated
much earlier in~\cite{FW}.) 
For convenience, we state it for not necessarily bounded domains.

\begin{prop}
\label{psh-Mergelyan}
If $\Omega$ is a domain with simple boundary in~$\C^n$,
then functions from $P(\conj{\Omega})$ can be approximated
uniformly on compact subsets of $\conj{\Omega}$ by continuous psh 
functions on {\rm (}varying\/{\rm )} neighbourhoods of $\conj{\Omega}$.
\end{prop}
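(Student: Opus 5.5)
The plan is to reduce the statement to the bounded case treated in~\cite{AHP}, and then to extend the local approximant to a neighbourhood of all of $\conj{\Omega}$ by a standard gluing with a large psh exhaustion. Fix $\phi\in P(\conj{\Omega})$, a compact set $L\Subset\conj{\Omega}$ and $\eps>0$. Choose $r>0$ with $L\subset B(0,r)$, and then radii $r<R_1<R$.

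\emph{Step 1 (local approximation on a large ball).} We want a continuous psh function $v$ on an open neighbourhood $N$ of $\conj{\Omega}\cap\conj{B(0,R_1)}$ such that $|v-\phi|<\eps$ on $\conj{\Omega}\cap\conj{B(0,R_1)}$.

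\begin{itemize}
\item First try: find a bounded domain $D\subset\Omega$ with simple boundary such that $D\cap B(0,R)=\Omega\cap B(0,R)$ near $\conj{B(0,R_1)}$. One could take the component of $\Omega\cap B(0,R')$ containing the relevant part, with $R'$ chosen generically and corners rounded in the coordinates of Definition~\ref{simple-boundary}. Then apply~\cite{AHP} to $\phi|_{\conj{D}}$.
\item Fallback: if constructing such a $D$ is awkward, redo the proof of~\cite{AHP} directly, since it is essentially local. Cover the compact set $\p\Omega\cap\conj{B(0,R)}$ by finitely many charts $U_j=B_j\times I_j$ as in Definition~\ref{simple-boundary}. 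Define the push-in $\phi_{j,t}(z,w):=\phi(z,w-it)$. For small $t>0$ this is continuous and psh on a neighbourhood of $\conj{\Omega}\cap U'_j$ for slightly smaller $U'_j$, because $v<F+t$ there. By uniform continuity of $\phi$ on compacts, $\phi_{j,t}$ is $\eps$-close to $\phi$. Then glue the $\phi_{j,t}$ and $\phi$ itself (on the interior part) with regularised maxima and small bump corrections, in the Richberg fashion used in Proposition~\ref{def-func}.
\end{itemize}

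I expect Step~1 to be the main obstacle, specifically the gluing across overlapping charts, where different push-in directions must remain compatible up to $\eps$. This is exactly the content of~\cite{AHP}, and I would quote it wherever possible.

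\emph{Step 2 (globalisation).} Shrink $N$ so that $\conj{N}\cap\conj{B(0,R_1)}$ is compact. Set $m:=\min v$ and $M:=\max v$ over $\conj{N}\cap\conj{B(0,R_1)}$, and put
$$
h(z):=A\bigl(|z|^2-r^2\bigr)+m-1, \qquad A>0 \text{ so large that } A(R_1^2-r^2)+m-1>M+1.
$$
Then $h<v$ on $N\cap B(0,r)$, and $h>v$ on $N$ near the sphere $|z|=R_1$. Define
$$
u:=\max\{v,h\} \text{ on } N\cap B(0,R_1), \qquad u:=h \text{ on } \bigl(N\cup(\C^n-\conj{\Omega})\bigr)\cap\bigl(\C^n-\conj{B(0,R_1')}\bigr)
$$
for $R_1'<R_1$ close to $R_1$ (so that $h>v$ already on $R_1'\le|z|\le R_1$). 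The two definitions agree on the overlap. Hence $u$ is continuous and psh on the neighbourhood $\bigl(N\cap B(0,R_1)\bigr)\cup\bigl(\C^n-\conj{B(0,R_1')}\bigr)$ of $\conj{\Omega}$. Moreover $u=v$ on $L\subset B(0,r)$, so $|u-\phi|<\eps$ on $L$, which proves the proposition.
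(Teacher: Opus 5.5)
Your Step~2 is correct, up to one slip. Put $u:=h$ on all of $\C^n-\conj{B(0,R_1')}$, because points of $\conj{\Omega}-\conj{B(0,R_1)}$ need not lie in $N\cup(\C^n-\conj{\Omega})$. The problem is Step~1, which carries the whole content of the proposition and is not established.

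\textbf{The reduction to \cite{AHP} fails.} Your main route needs a bounded domain $D\subset\Omega$ with simple boundary that agrees with $\Omega$ near $\conj{B(0,R_1)}$. Cutting by a generic ball and ``rounding corners'' does not produce one. At a corner point $q\in\p\Omega\cap\p B(0,R')$, the set $\Omega\cap B(0,R')$ has simple boundary only if one holomorphic push-in field serves both boundary pieces. A merely continuous $\p\Omega$ may tolerate no tilt away from the chart direction $-\p/\p v$. For example, take $n=1$ and $\Omega=\{v<F(u)\}$, where $-2\le F\le -1$ is a Weierstrass-type function with steep rises and falls at all small scales in every interval. Every corner of $\Omega\cap B(0,R')$ then lies on the lower half of the circle, where $-\p/\p v$ points out of the disc. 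So the segment property fails there, and with it simplicity of the boundary, for every $R'$. ``Rounding'' a $C^0$ corner is precisely the construction you would have to supply, for instance by cutting with hypersurfaces that are subgraphs in the charts. That is not obvious when several charts with different vertical directions overlap.

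\textbf{Your fallback is the right idea and matches the paper.} Following \cite[\S 2]{PW}, the segment property gives local approximability through the push-ins $\phi(z,w-it)$. This is exactly the hypothesis of Gauthier's localisation theorem \cite[\S 3]{Gau}. That theorem performs the fusion you flag as the main obstacle. It directly yields a continuous psh $\psi$ on $\conj{\Omega}$ with $|\psi-\phi|<\eps(|z|^2+1)$ on all of $\conj{\Omega}$. So neither a cut-off domain nor your Step~2 is needed.

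If you prefer to glue by hand, note that the Richberg bump terms $\delta_j(r_j^2-|z-z_j|^2)$ destroy plurisubharmonicity when $\phi$ is merely psh. You must first pass to $\phi+\eps|z|^2$ and take $\delta_j<\eps$. With this margin, the finite gluing over a cover of the compact set $\conj{\Omega}\cap\conj{B(0,R)}$ goes through. The strict inequalities that switch off each piece near $\p B''_j$ hold on $\conj{\Omega}$, and by compactness on a small neighbourhood of it. Your Step~2 then completes the argument.
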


\begin{proof}
As observed in~\cite[\S 2]{PW}, it follows from the segment
property that a function $\phi\in P(\conj{\Omega})$
satisfies the assumptions of Gauthier's localisation theorem~\cite[\S 3]{Gau}.
By Corollary~1 and Example 1 in~\cite[\S 3]{Gau}, for every $\eps>0$, 
there exists a continuous psh function $\psi$ on $\conj{\Omega}$ such that
$$
|\psi(z) - \phi(z)| < \eps(|z|^2+1), \quad z\in\conj{\Omega},
$$
which proves the proposition.
\end{proof}

Combining the Mergelyan property with the local maximum principles 
for psh functions allows us to compare different hulls in~$\conj{\Omega}$.

\begin{prop}
\label{hulls-in-pOhulls}
If the polynomially or rationally convex hull $\wt{K}$ of
a compact set $K\Subset\C^n$ is contained in the
closure $\conj{\Omega}$ of a domain with 
simple boundary, then $\wt{K}\subseteq\ph{K}$.
\end{prop}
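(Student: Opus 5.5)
We have to show that for $p\in\wt{K}$ and $\phi\in P(\conj{\Omega})$ one has $\phi(p)\le\max_K\phi$. Here $\wt{K}\subset\conj{\Omega}$, so in particular $K\subset\conj{\Omega}$.

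\emph{Step 1: approximation.} Fix $\eps>0$. By Proposition~\ref{psh-Mergelyan}, applied on the compact set $\wt{K}\subset\conj{\Omega}$, there is a continuous psh function $\psi$ on some open neighbourhood $W\supset\conj{\Omega}$ with $|\psi-\phi|<\eps$ on $\wt{K}$. It therefore suffices to prove
$$
\psi(p)\le\max_K\psi \quad\text{for all } p\in\wt{K}
$$
for every continuous psh function $\psi$ defined only on a neighbourhood of $\wt{K}$; letting $\eps\to0$ then gives the claim.

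\emph{Step 2: the maximum principle for a non-global $\psi$.} This is the main point, since $\psi$ need not extend to a psh function on $\C^n$, so the definition of $\wt{K}$ cannot be applied to it directly. Suppose instead that $\psi(p)>\max_K\psi$. Choose a constant $c$ with $\max_K\psi<c<\psi(p)$ and set
$$
U:=\{z\in W'\mid \psi(z)>c\},
$$
where $W'\Subset W$ is an open neighbourhood of $\wt{K}$. Then $U$ is open and disjoint from $K$, and $p\in U\cap\wt{K}$. At a point of $\p U\cap\wt{K}$ we cannot be on $\p W'$, because $\wt{K}\subset W'$ and $W'$ is open; hence such a point satisfies $\psi=c$. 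Moreover $\psi$ is psh on a neighbourhood of $\conj U\subset\conj{W'}\subset W$.

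Theorem~\ref{max-phull} (polynomial case) or Theorem~\ref{max-rhull} (rational case) now gives
$$
\psi(p)\le\max_{\p U\cap\wt{K}}\psi\le c,
$$
which contradicts $\psi(p)>c$. If $\p U\cap\wt{K}=\varnothing$, the same theorems still apply, but more care is needed about the meaning of the maximum over the empty set. In that case I would argue directly: $U\cap\wt{K}$ is then a nonempty clopen part of $\wt{K}$ disjoint from $K$. Taking $\phi\equiv$ a constant on $\conj U$ in Theorem~\ref{max-phull} or Theorem~\ref{max-rhull} forces $\p U\cap\wt{K}\ne\varnothing$ by the convention that a maximum over the empty set is $-\infty$. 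This is the step I expect to need the most care, but it is standard (Shilov idempotent theorem).

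This yields $\wt{K}\subseteq\ph{K}$, because every $p\in\wt{K}$ lies in $\conj{\Omega}$ and satisfies the defining inequality for every $\phi\in P(\conj{\Omega})$.
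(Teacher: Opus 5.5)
Your argument is correct and is essentially the paper's proof. You approximate $\phi$ uniformly on $\wt{K}$ by psh functions on neighbourhoods of $\conj{\Omega}$ via Proposition~\ref{psh-Mergelyan}, apply Theorem~\ref{max-phull} or~\ref{max-rhull}, and let the error tend to zero. The only difference is the open set. The paper takes $U_m=V_m-K$, so that $\p U_m\cap\wt{K}\subseteq K$ gives $\phi_m(p)\le\max_K\phi_m$ directly, whereas your superlevel set forces a contradiction argument and a separate empty-boundary case. You handle that case correctly via the Shilov idempotent theorem, and the proofs of the cited theorems also cover it.
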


\begin{proof}
Suppose that $p\in\wt{K}-K$ and $\phi\in P(\conj{\Omega})$.
Let $\phi_m$, $m\in\mathbb{N}$, be a sequence of continuous psh functions 
on the closures of open sets $V_m\supset \conj{\Omega}$ converging
uniformly to $\phi$ on $\wt{K}$ provided by
Proposition~\ref{psh-Mergelyan}. Applying Theorem~\ref{max-phull}
or Theorem~\ref{max-rhull} to the compact set $K$, the open set $U_m:=V_m-K$,
and the psh function $\phi_m$ on $\conj{U}_m$, we obtain
$$
\phi_m(p)\le \max\limits_{\p U_m\mathop{\cap}\wt{K}} \phi_m \le \max\limits_{K} \phi_m
$$
for all $m\in\mathbb{N}$. Letting $m\to\infty$, we conclude that 
$$
\phi(p)\le \max\limits_K \phi.
$$
Hence, $p\in\ph{K}$.
\end{proof}

\begin{cor}
\label{phull=pOhull}
If $\Omega$ has simple boundary and $\wh{K}$ is contained in $\conj\Omega$,
then $\wh{K}=\ph{K}$.
\end{cor}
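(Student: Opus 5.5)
We prove the two inclusions $\wh{K}\subseteq\ph{K}$ and $\ph{K}\subseteq\wh{K}$ separately; neither direction needs any new machinery.

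For $\wh{K}\subseteq\ph{K}$ we apply Proposition~\ref{hulls-in-pOhulls} in its polynomially convex case. The hypotheses are exactly those of the corollary: $\Omega$ has simple boundary and $\wh{K}\subseteq\conj{\Omega}$. There is one point to check. The definition of the $P(\conj{\Omega})$-hull requires $K\Subset\conj{\Omega}$, and this holds because $K\subseteq\wh{K}\subseteq\conj{\Omega}$. The proof of Proposition~\ref{hulls-in-pOhulls} treats only points $p\in\wh{K}-K$, but points of $K$ lie in $\ph{K}$ trivially, so the whole of $\wh{K}$ is covered.

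For the reverse inclusion $\ph{K}\subseteq\wh{K}$, we observe that for every polynomial $P$ the function $|P|$ restricted to $\conj{\Omega}$ is continuous on $\conj{\Omega}$ and psh in $\Omega$, so $|P|\in P(\conj{\Omega})$. This is the observation already recorded after~\eqref{local-pOhull-pOhull}. Now take $z\in\ph{K}$. Then $z\in\conj{\Omega}$ and $|P(z)|\le\max_K|P|$ for all $P$, so $z\in\wh{K}$.

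Combining the two inclusions gives $\wh{K}=\ph{K}$. The only nontrivial ingredient is the Mergelyan-type approximation, Proposition~\ref{psh-Mergelyan}, which enters through Proposition~\ref{hulls-in-pOhulls}; both are already established, so no step here is a real obstacle.
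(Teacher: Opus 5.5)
Your proof is correct and follows the paper's own argument: the inclusion $\wh{K}\subseteq\ph{K}$ comes from Proposition~\ref{hulls-in-pOhulls}, and the reverse inclusion holds because moduli of polynomials belong to $P(\conj{\Omega})$. Your checks that $K\Subset\conj{\Omega}$ and that points of $K$ lie in $\ph{K}$ trivially are valid details that the paper leaves implicit.
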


\begin{proof}
$\wh{K}\subseteq\ph{K}$ by the preceding proposition and the opposite
inclusion is obvious because moduli of polynomials are in~$P(\conj{\Omega})$.
\end{proof}

For a domain $\Omega$ with simple boundary, functions from $P(\conj{\Omega})$
are psh on complex analytic subsets in~$\p\Omega$. This observation appeared 
in a slightly different form in~\cite[\S 4.B]{KR}.

\begin{cor}
\label{psh-on-bd-disc}
If $\Omega$ has simple boundary and $\phi\in P(\conj{\Omega})$, 
then the restriction of $\phi$ to an analytic disc $\Delta\subset\p\Omega$ 
is subharmonic.
\end{cor}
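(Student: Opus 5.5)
The plan is to reduce the statement to subharmonicity of continuous psh functions defined on neighbourhoods of $\conj{\Omega}$, using Proposition~\ref{psh-Mergelyan}. Let $\Delta\subset\p\Omega$ be an analytic disc, i.e.\ the image of a holomorphic map $f$ from the unit disc $D\subset\C$ (or, locally, a one-dimensional complex submanifold) contained in $\p\Omega$. Subharmonicity is a local property, so it suffices to check the sub-mean value inequality for $\phi\circ f$ on small closed discs $\conj{D(\zeta_0,r)}\Subset D$. Note that $\phi\circ f$ is continuous because $\phi$ is continuous on $\conj{\Omega}$; hence it is enough to verify $\phi\circ f(\zeta_0)\le \frac{1}{2\pi}\int_0^{2\pi}\phi\circ f(\zeta_0+re^{it})\,dt$ for all such $\zeta_0$ and small $r$.

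Fix such a closed disc, so that its image $f(\conj{D(\zeta_0,r)})$ is a compact subset of $\p\Omega\subset\conj{\Omega}$. By Proposition~\ref{psh-Mergelyan}, there exist continuous psh functions $\phi_m$ on neighbourhoods $V_m\supset\conj{\Omega}$ with $\phi_m\to\phi$ uniformly on this compact subset of $\conj{\Omega}$. Each $\phi_m$ is psh on the open set $V_m$, which contains the whole image of a neighbourhood of $\conj{D(\zeta_0,r)}$ under $f$, so $\phi_m\circ f$ is subharmonic there (composition of a psh function with a holomorphic map), hence satisfies the sub-mean value inequality on the circle $|\zeta-\zeta_0|=r$. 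Passing to the uniform limit in $m$ gives the same inequality for $\phi\circ f$. A continuous function satisfying the local sub-mean value inequality is subharmonic, which proves the corollary.

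The only genuinely nontrivial input is the Mergelyan-type approximation of Proposition~\ref{psh-Mergelyan}, which is where simplicity of the boundary enters: a function in $P(\conj{\Omega})$ is a priori psh only in $\Omega$ and carries no information along complex directions inside $\p\Omega$, so one cannot argue directly. Alternatively (and this would be my fallback if one wished to avoid the approximation result), one could use the segment property: with the holomorphic vector field $X=-\p/\p v$ from Definition~\ref{simple-boundary}, the pushed discs $e^{\eps X}\circ f$ lie in $\Omega$ locally for small $\eps>0$, so $\phi\circ e^{\eps X}\circ f$ is subharmonic, and these converge uniformly to $\phi\circ f$ as $\eps\to0$ by uniform continuity of $\phi$ on compacts of $\conj{\Omega}$; this local version avoids any global issue and is essentially the same limiting argument.
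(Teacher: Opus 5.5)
Your proposal is correct and follows the paper's proof: the paper also obtains $\phi|_\Delta$ as a local uniform limit of subharmonic functions via Proposition~\ref{psh-Mergelyan}. It also notes your fallback as a direct alternative, namely the shifts $\phi(z,w-i/m)$ in the coordinates of Definition~\ref{simple-boundary}.
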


\begin{proof}
${\phi|}_\Delta$ is the local uniform limit of sh functions by Proposition~\ref{psh-Mergelyan}.
These local approximations may also be constructed directly using the segment property.
Assume w.\,l.\,o.\,g.\ that $\Delta$ lies in a coordinate neighbourhood from Definition~\ref{simple-boundary}.
Then ${\phi|}_\Delta$ is the uniform limit  
of sh functions $\phi_m(z,w):=\phi(z,w-i/m)$ as $m\to\infty$.
\end{proof}

\subsection{Pseudoconvex domains}
The following proposition implies that the closure of a pseudoconvex domain 
with simple boundary is locally holomorphically convex. 
For domains with $C^1$-smooth boundary, this local result 
was obtained in~\cite[Lemme 3]{HS} by a different argument.

\begin{prop}
\label{loc-pol-conv}
Let $F:\conj{B}\to \R_v$ be a continuous function 
on a Euclidean ball in $\C^{n-1}_z\times\R_u$ such that the domain
$$
\Omega_F := \{(z,w)\in\C^n \mid (z,u)\in B, \, v<F(z,u) \}
$$
is pseudoconvex. If $F\ge v_-$ on $\conj{B}$,  then the set 
$$
K=K(v_-,F):=\{(z,w)\in\C^n \mid (z,u)\in \conj{B}, \, v_-\le v\le F(z,u)\}
$$
is polynomially convex. {\rm (}Here $w=u+iv$ as in Definition~{\rm \ref{simple-boundary}}.{\rm )}
\end{prop}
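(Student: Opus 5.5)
We show that every point $q\notin K$ is separated from $K$ by a continuous psh function on $\C^n$; since the polynomially convex hull equals the convex hull with respect to continuous psh functions on $\C^n$ (the fact of Oka recalled in \S\ref{subsec-lmp-psh-huls}), this proves that $K$ is polynomially convex.

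The natural test functions are $-\log$ of the distance to the boundary of $\Omega_F$, taken in the $v$-direction and cut off so that they are defined globally. Pseudoconvexity of $\Omega_F$ is equivalent to plurisubharmonicity of $-\log d$ in $\Omega_F$ for the distance in any fixed complex direction (\cite[\S 2.6]{H}). Applying this to the direction $\p/\p w$ in the region where the relevant disc stays inside the cylinder $B\times\R_v$ lets us control the graph part of the boundary. The key observation is that $\Omega_F$ is invariant under the translations $w\mapsto w-it$, $t\ge0$. Hence the directional boundary distance along $-i\,\p/\p w$ is infinite, and along $+i\,\p/\p w$ it is at most $F(z,u)-v$ up to the sideways effect of $B$.

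A cleaner route avoids distances. For $\delta>0$, the translate $\Omega_\delta:=\Omega_F-i\delta$ contains the compact piece $K_\delta$ obtained by shrinking $B$ slightly. The domain $\Omega_F$ is pseudoconvex, and it is convex in the $u$- and $v$-directions in the following sense. Consider the intersections with the complex line $\{z=\text{const}\}$ and with $\{|w|<R\}$. After intersecting with the convex cylinder $\{(z,u)\in B\}\times\{v>v_--1\}$, each such set is again a pseudoconvex domain, $D:=\Omega_F\cap\{v>v_--1\}$. The closure $K$ is then approximately the intersection $\bigcap_\delta\conj{D}\cap\{v\ge v_-\}$. The plan is therefore the following:
\begin{enumerate}
\item Show that $D$ admits a continuous psh exhaustion that extends continuously up to the parts of $\p D$ where $v=F$. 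The candidate is $-\log(F(z,u)-v)$ regularised via the translation trick: set $\phi_\delta(z,w):=\sup$ over psh functions, or more concretely use $-\log d_{\p\Omega_F}$ composed with the downward shift, which is psh on a neighbourhood of $K$ shifted down.
\item Combine with $|z|^2$, $u^2$ and $-v$ to cut out the ball and the floor $v\ge v_-$. These are convex, hence psh, in $\C^n$.
\item Glue the pieces by a maximum construction so that the result is psh on all of $\C^n$, is $\le 0$ on $K$, and is $>0$ at $q$.
\end{enumerate}

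The expected main obstacle is step 1 near the rim $\p B\times\R_v$, where the graph meets the side of the cylinder. There the psh function coming from $-\log$ of the distance must be patched with the convex function defining $\conj{B}$ so that the maximum is globally defined. I would first try replacing $\conj{B}$ by a slightly larger ball $B_1$ on which $F$ extends continuously (shrinking $B$ is harmless by restriction). I would then use $\max\{-\log d_{\p\Omega_F}(z,w+i\delta),\,C(|z|^2+u^2-r^2)\}$ with $C$ large, so that near the rim the convex term dominates. This makes the function psh on $\C^n$ after extending it by the convex term outside. Letting $\delta\to0$ would then separate any $q$ with $v>F(z,u)$ from $K$.
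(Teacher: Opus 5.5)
Your proposal has a genuine gap, and it sits in the gluing step, which is where all the difficulty lies. With the shift taken in the right direction (your $w+i\delta$ gives the domain $\Omega_{F-\delta}$, which misses the top of $K$), the function $-\log d_{\p\Omega_F}(z,w-i\delta)$ is psh only on $\Omega_{F+\delta}$. It tends to $+\infty$ at every point of $\p\Omega_{F+\delta}$, including the whole side wall $\p B\times\R_v$. Vertical shifts do not move the rim part $K\cap(\p B\times\R_v)$ off that wall, so your candidate blows up on $K$ itself, and taking a maximum with $C(|z|^2+u^2-r^2)$ only makes it larger.

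More fundamentally, if $\phi_1$ is psh on a domain $D$ and $\phi_2$ is psh globally, then $\max\{\phi_1,\phi_2\}$ extends psh across $\p D$ only if $\phi_2>\phi_1$ near $\p D$. That is impossible when $\phi_1\to+\infty$ there. The same obstruction appears along the upper graph $v=F+\delta$. Moreover, at a point $q$ with $(z_q,u_q)\in\conj{B}$ and $v_q>F(z_q,u_q)$, the convex terms you use to cut out the cylinder and the floor are all $\le 0$, so nothing in your construction is positive at $q$. Enlarging $B$ to $B_1$ with a continuous extension of $F$ does not help either, since the extended domain need not be pseudoconvex.

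Two side issues. First, $-\log(F-v)$ is not psh in general: already for $n=1$ and $F(u)=u^2$, its Laplacian at $(u,v)=(0,-1)$ equals $-1$. Second, an exhaustion function cannot extend continuously to $\p D$. A bounded one would mean hyperconvexity, which is open for merely continuous $F$.

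The missing idea is that you do not need a global separating function. The local maximum principle (Theorem~\ref{max-phull}) lets you work with a psh function defined only near the hull, so argue by contradiction:
\begin{enumerate}
\item The convex hull gives $\wh{K}\subset\conj{B}\times[v_-,+\infty)$.
\item The strictly psh function $|z|^2+u^2$, together with Remark~\ref{strict-max}, shows that $\wh{K}-K$ misses $\p B\times\R_v$.
\item Hence, if $\wh{K}\ne K$, the maximum $v^*>0$ of $v-F$ on $\wh{K}$ is attained at a point $q$ over $B$.
\item Shift $K$ down by $t\in[-\eps,0]$. The gap $v^*>0$ keeps the rim part of these shifted sets inside the strictly pseudoconvex side wall of $\Omega_{v^*+F}$, away from its top graph.
\item A Grauert bump there yields a pseudoconvex $\wt{\Omega}$ containing $s_t(K)$ for $t\in[-\eps,0]$ and $s_t(\wh{K})$ for $t<0$.
\item A psh exhaustion of $\wt{\Omega}$ is larger at $s_{t_0}(q)$, for some $t_0<0$ near $0$, than everywhere on $s_{t_0}(K)$. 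This contradicts Theorem~\ref{max-phull}.
\end{enumerate}
This is how the rim, which you correctly identified as the main obstacle, is actually handled.
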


\begin{proof}[Proof {\rm (cf.~Lemma~9 in~\cite{S2})}]
The polynomially convex hull is contained in the usual convex hull~\cite[p.~3]{Sto},
so $\wh{K}\subset\conj{B}\times [v_-,+\infty)$.

Note that the function $\rho(z,w)= |z|^2 + u^2$ is {\it strictly\/} psh on $\C^n$.
Hence, the set $\wh{K}-K$ cannot intersect $\p B\times \R_v$ because
then $\rho$ would attain its maximum there, which is impossible by Remark~\ref{strict-max}.

Therefore if $\wh{K}-K\ne\varnothing$, then
$$
v^* := \max\limits_{\wh{K}} \,(v - F(z,u))\, > 0
$$
and the maximum is attained at $q=(z,u,v^*+F(z,u))$ with $(z,u)\in B$.

Let $s_t(z,w):=(z, w+it)$ be the shift in the $v$-variable. 
For a fixed $\eps>0$ consider the compact set
$$
X = \bigcup\limits_{t\in [-\eps,0]} s_t(K) 
$$
and denote by $Y$ its intersection with $\p B\times \R_v$.

Since $Y$ is compactly contained in the {\it strictly\/} pseudoconvex part
of the boundary of $\Omega_{v^*+F}$ disjoint from the graph $\Gamma(v^*+F)$, 
it follows that there exists a neighbourhood $V\supset Y$ with $V\cap\Gamma(v^*+F)=\varnothing$
such that the domain $\wt{\Omega}=\Omega_{v^*+F}\cup V$ is pseudoconvex and $Y\Subset\wt{\Omega}$.
(This observation is known as the ``Grauert bump method'',
see~\cite[IX.B.4]{GR}.) 
Then also $X\Subset\wt{\Omega}$ and hence $s_t(K)\Subset\wt{\Omega}$ for all $t\in [-\eps,0]$.

Furthermore, $s_{t}(\wh{K})\Subset\wt{\Omega}$ for all negative $t\in [-\eps,0)$ 
by the definition of $v^*$ and because $\wh{K}$ and $K$ have the same intersection 
with $\p B\times\R_v$.

Let now $\psi:\wt{\Omega}\to\R$ be a psh exhaustion function on~$\wt{\Omega}$.
Set
$$
m_X:=\max\limits_X \psi.
$$
Since $q\in\Gamma(v^*+F)\subset \p\wt{\Omega}$ and $s_t(q)\in\wt{\Omega}$ for $t<0$, 
there exists a $t_0\in [-\eps,0)$
such that $\psi(s_{t_0}(q))>m_X$ and, in particular,
$$
\psi(s_{t_0}(q)) > \max\limits_{s_{t_0}(K)} \psi.
$$
However, $s_{t_0}(q)\in s_{t_0}(\wh{K}-K)=\wh{s_{t_0}(K)} - s_{t_0}(K)$. Hence,
we obtain a contradiction with Theorem~\ref{max-phull} applied
to the compact set $s_{t_0}(K)$ and the open set $U=U'-s_{t_0}(K)$
for any $U'\supset \wh{s_{t_0}(K)}$ with $\conj{U'}\subset \wt{\Omega}$.

This proves that $\wh{K}=K$ and the proposition follows.
\end{proof}

A related important property of pseudoconvex domains with simple boundary 
was established by Kerzman and Rosay in~\cite{KR}.

\begin{prop}
\label{disc-touching-bd}
An analytic disc in the closure of a pseudoconvex domain 
with simple boundary lies either entirely 
inside the domain or entirely in its boundary.
\end{prop}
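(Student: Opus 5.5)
Let $f:\mathbb{D}\to\conj{\Omega}$ be a holomorphic disc (continuous up to the boundary if needed; shrinking, we may work with open discs). Set $A:=f^{-1}(\p\Omega)$. This set is relatively closed in $\mathbb{D}$, and by connectedness of $\mathbb{D}$ it suffices to show that $A$ is also open. So assume $\zeta_0\in A$ and put $p=f(\zeta_0)\in\p\Omega$. I will show that $f$ maps a neighbourhood of $\zeta_0$ into $\p\Omega$. We may localise: choose the coordinates $(z,w)$ and the neighbourhood $U=B\times I$ from Definition~\ref{simple-boundary}, so that $\Omega\cap U=\{v<F(z,u)\}$. Shrinking the disc around $\zeta_0$, we may assume $f(\mathbb{D})\subset U$. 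Write $f=(g,h)$ with $h=h_1+ih_2$. The hypothesis $f(\mathbb{D})\subset\conj{\Omega}$ then reads $h_2\le F(g,h_1)$, with equality at $\zeta_0$.

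The natural tool is a psh function that vanishes on the boundary and is negative inside, in the spirit of the Kerzman--Rosay construction. The plan has two steps.

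First, use pseudoconvexity of $\Omega\cap U$ (the intersection of $\Omega$ with the convex set $U$ is pseudoconvex, hence so is $\Omega_F$ after shrinking $B$ and adjusting $F$). Then construct a negative continuous psh function $\rho$ on $\Omega_F$, near $p$, with $\rho(z,w)\to0$ as $(z,w)\to\p\Omega$ near $p$. A candidate is $-d^{\,v}(z,w)^{\eta}$, with $d^{\,v}$ the vertical distance $F(z,u)-v$, or rather a psh minorant of it. The cleanest route I would try is different, via Proposition~\ref{loc-pol-conv} and the local maximum principle. Consider the shifted discs $f_t:=s_{-t}\circ f$ for $t>0$. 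These lie in $\Omega_F$ (they are shifted down by the segment property), at least over a slightly smaller disc. Apply Proposition~\ref{loc-pol-conv} to the sets $K(v_-,F)$ and to $K(v_-,F-\delta)$, which gives local polynomial convexity. The key claim is then: if $f(\zeta_1)\in\Omega$ for some $\zeta_1$ near $\zeta_0$, then the function $\zeta\mapsto h_2(\zeta)-F(g(\zeta),h_1(\zeta))$, which is $\le0$, attains the interior maximum $0$ at $\zeta_0$ without being identically zero. To get a contradiction I want this function to be subharmonic, or to be dominated by a subharmonic function equal to it at $\zeta_0$.

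Second, I would make this precise with a psh defining-type function. Since $-\log d_{\p\Omega}$ is psh on the pseudoconvex domain, I would use instead the vertical boundary distance $\delta_v(z,w)=F(z,u)-v$ along the fixed complex direction $\p/\p w$. The function $-\log\delta_v$ is psh on $\Omega_F$. This is the standard Hartogs-type fact for the directional distance, since $\Omega_F$ is pseudoconvex and directional boundary distances satisfy the same plurisubharmonicity as the Euclidean one (\cite[\S 2.6]{H}). Now $u(\zeta):=-\log\delta_v(f_t(\zeta))=-\log\bigl(\delta_v(f(\zeta))+t\bigr)$ is subharmonic on the disc for every $t>0$. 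Letting $t\to0$, the decreasing limit $-\log\delta_v\circ f$ is subharmonic, with values in $(-\infty,+\infty]$, and equals $+\infty$ exactly on $A$. A subharmonic function that is $\not\equiv+\infty$ is locally integrable, so $A$ has measure zero unless $f(\mathbb{D})\subset\p\Omega$. This alone is not openness, so I need the sharper property that $\{u=+\infty\}$ is polar and hence cannot contain an open set. That is the wrong direction.

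The main obstacle is therefore to exclude a disc touching $\p\Omega$ only on a small set. I would reverse the roles and use $\delta_v^{\,\eta}$-type functions. For small $\eta$, the function $-\delta_v^{\eta}$ composed with $f_t$ is subharmonic, by a Diederich--Fornæss-type estimate derived from the psh function $-\log\delta_v+\text{const}\,|z|^2$. Then $-\delta_v(f)^\eta\le0$ is subharmonic and attains its maximum $0$ at the interior point $\zeta_0$, so it vanishes identically by the maximum principle. Securing that $-\delta_v^\eta$, or some negative psh function vanishing exactly on $\p\Omega$, is psh near $p$ is the step I expect to be hardest. I would obtain it from the exhaustion $-\log\delta_v$ via the Kerzman--Rosay argument, composing with a convex increasing function and using the bounded strictly psh term $|z|^2+u^2$.
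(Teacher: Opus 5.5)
\emph{There is a genuine gap: the one step that carries the whole argument is only sketched, and the sketch fails.}

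Your frame is fine: show that $A=f^{-1}(\partial\Omega)$ is open by a maximum-principle argument at $\zeta_0$. The problems are in what you need to run it.

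First, $-\log\delta_v$ with $\delta_v=F(z,u)-v$ is not psh in general. The Hartogs--Oka fact concerns $\delta_e(q)=\sup\{r:\ q+\lambda e\in\Omega \text{ for all } \lambda\in\C,\ |\lambda|<r\}$. That is the distance to $\partial\Omega$ inside the complex line $q+\C e$, not the gap along the real $v$-ray. For example, take $\Omega=\{v<|u|^{3/2}\}\subset\C$ (times $\C^{n-1}$ if you like) and $g=|u|^{3/2}-v$. The Laplacian of $-\log g$ in $w$ is $g^{-2}\bigl(1+\frac94|u|-\frac34|u|^{-1/2}g\bigr)$. This is negative at the points with $|u|=g^2/4$, $g$ small, and these points of $\Omega$ accumulate at $0$.

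Also, $-\log(\delta_v\circ f+t)$ \emph{increases} as $t\downarrow 0$ and becomes $+\infty$ on $A$, so its limit is not subharmonic. The integrability and polarity facts you invoke concern the $-\infty$ set.

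Second, and decisively, your final step needs a continuous psh $\rho<0$ on $\Omega$ near $p$ with $\rho\to0$ at $\partial\Omega$, i.e.\ a local bounded psh exhaustion. Your proposed constructions do not give one:
\begin{itemize}
\item Composing $-\log\delta$ with a convex increasing function only produces functions tending to $+\infty$.
\item Turning $\psi=-\log\delta$ into a psh $-e^{-\eta\psi}=-\delta^\eta$ requires $i\partial\bar\partial\psi\ge\eta\, i\partial\psi\wedge\bar\partial\psi$. This does not follow from plurisubharmonicity of $\psi$; \cite{DF} derives it from a $C^2$ boundary, and \cite{Ha} from a Lipschitz one.
\end{itemize}
For boundaries that are only continuous graphs, no such $\rho$ is known. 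Having one at every boundary point would make every bounded pseudoconvex domain with simple boundary hyperconvex, since hyperconvexity is a local property by \cite{KR}. The paper records that as an open problem in \S\ref{subsec-h-and-c-C2}; it is known only for H\"older boundaries \cite{C}. This is exactly why Theorem~\ref{mainthmB} needs hyperconvexity for $n\ge3$ while Proposition~\ref{disc-touching-bd} does not.

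The paper does not reprove the proposition. It quotes \cite[\S 4.D]{KR}, whose condition (*) is just the local segment property, so pseudoconvexity plus the segment property must suffice without any bounded exhaustion.

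One possible way to make your slicing idea work is to replace $\log$-distances by harmonic measure. In the coordinates of Definition~\ref{simple-boundary}, set $D=\{(\zeta,\lambda): (g(\zeta),h(\zeta)-i\lambda)\in\Omega\}$. This is pseudoconvex in $\C^2$, and near $(\zeta_0,0)$ it equals $\{\Re\lambda>\Gamma(\zeta,\Im\lambda)\}$ with $\Gamma(\zeta,b)=h_2(\zeta)-F(g(\zeta),h_1(\zeta)+b)$. Thus $\Gamma(\cdot,0)\le0$ and $\Gamma(\zeta_0,0)=0$.

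Suppose $\Gamma(\cdot,0)\le-c<0$ on a small disc $V$ near $\zeta_0$. Then $D$ contains the cross $(V\times W_c)\cup(\mathbb{D}_r(\zeta_0)\times W)$. Here $W=\{\Re\lambda>\omega(|\Im\lambda|)\}$, suitably localised, with $\omega$ a continuous modulus of continuity of $\Gamma$, and $W_c=W-c$ is a neighbourhood of $0$.

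Near its cusp at $0$, $W$ can be so thin that distances to $\partial W$ are superlinearly small; this is what defeats naive $\log$-distance estimates. But $W$ contains a segment ending at $0$, so it is non-thin there, and the relative extremal function of $W$ in $W_c$ vanishes at $0$. The cross theorem then places $(\zeta_0,0)$ in the envelope of holomorphy of the cross, hence in $D$. That contradicts $f(\zeta_0)\in\partial\Omega$.
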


This (local) result is proven in~\cite[\S 4.D]{KR} for a pseudoconvex domain
in~$\C^n$ satisfying condition~(*) on p.~172 of~\cite{KR}
and that condition is precisely the local segment property 
recalled in~\S\ref{subsec-simple-boundary}.
The example in~\cite[\S 4.C]{KR} shows that 
the assertion of the proposition can be false
for pseudoconvex domains in $\C^2$ with mildly 
discontinuous boundary.

\section{Polynomial hulls of graphs in $\C^2$}
\label{sec-p-hulls-graphs-c2}

The material in this section is mostly taken from~\cite{S1} and~\cite{S2}.

\subsection{Analytic discs with boundary}
For the purposes of this paper we adopt the following definition
of an analytic disc with boundary in a given set, which includes
the discs constructed in~\cite{S1}.

\begin{df}
\label{disc-w-boundary}
Let $X\subset\C^n$. An {\it analytic disc $\Delta$ with boundary in $X$}
is the image of an injective holomorphic immersion $\iota:\mathbb{D}\hookrightarrow\C^n$
of the open unit disc $\mathbb{D}=\{|\zeta|<1\}\subset\C$ such that 
\begin{itemize}
\item[1)] the closure $\conj{\Delta}=\conj{\iota(\mathbb{D})}$ is compact in~$\C^n$,
\item[2)] the boundary cluster set $b\Delta:=\bigcap\limits_{K\Subset\mathbb{D}} \conj{\iota(\mathbb{D}-K)}$ lies in $X$.
\end{itemize}
\end{df}

Note that if $\Delta\cap X=\varnothing$, then $\Delta$ is closed in $\C^n-X$. In particular, if $X$ is closed
and $\Delta\cap X=\varnothing$, then
$\Delta$ is a (properly embedded) complex submanifold of $\C^n-X$.

\begin{lem}
\label{psh-on-discs}
If $\phi$ is psh on a neighbourhood of $\conj{\Delta}$ and $p\in\Delta$, then
$$
\phi(p)\le \max\limits_{b\Delta} \phi.
$$
If $\phi$ is strictly psh, the inequality is strict.
\end{lem}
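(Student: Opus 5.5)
The idea is to pull $\phi$ back to the unit disc and apply the maximum principle for subharmonic functions, using that the cluster set $b\Delta$ records all limiting values of $\iota$ near $\p\mathbb{D}$.

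Set $u:=\phi\circ\iota$ on $\mathbb{D}$. Since $\iota$ is holomorphic and $\phi$ is psh on a neighbourhood $W$ of the compact set $\conj{\Delta}$, the function $u$ is subharmonic on $\mathbb{D}$. It is bounded above because $\phi$ is upper semicontinuous on the compact set $\conj{\Delta}$. Let $M:=\max_{b\Delta}\phi$. This maximum is attained: $b\Delta$ is an intersection of nested nonempty compact sets, hence nonempty and compact, and an upper semicontinuous function attains its maximum on a compact set.

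The key step is the boundary estimate
$$
\limsup\limits_{|\zeta|\to 1} u(\zeta)\le M .
$$
I would argue by contradiction. Suppose there are $\zeta_k\in\mathbb{D}$ with $|\zeta_k|\to 1$ and $u(\zeta_k)\ge M+\eps$. Passing to a subsequence, $\iota(\zeta_k)\to q\in\conj{\Delta}$. Every compact $K\Subset\mathbb{D}$ contains only finitely many $\zeta_k$, so $q$ lies in each $\conj{\iota(\mathbb{D}-K)}$, and therefore $q\in b\Delta$. Upper semicontinuity of $\phi$ then gives $\phi(q)\ge\limsup\phi(\iota(\zeta_k))\ge M+\eps$, which contradicts the definition of $M$.

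Given this estimate, the maximum principle for subharmonic functions on $\mathbb{D}$ yields $u\le M$ everywhere. Applying this at $\zeta=\iota^{-1}(p)$ gives $\phi(p)\le M$.

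For the strict inequality when $\phi$ is strictly psh, I would follow Remark~\ref{strict-max}. Pick $p=\iota(\zeta_0)$. Strict plurisubharmonicity on a neighbourhood of the compact set $\conj{\Delta}$ gives some $c>0$ such that $\phi-c|z-p|^2$ is still psh near $\conj{\Delta}$; shrinking the neighbourhood to a relatively compact one makes such a uniform $c$ available. Applying the inequality already proved to $\wt\phi=\phi-c|z-p|^2$ gives
$$
\phi(p)=\wt\phi(p)\le\max\limits_{b\Delta}\bigl(\phi-c|z-p|^2\bigr).
$$
Now $b\Delta$ does not contain $p$. Indeed, $\iota$ is an injective immersion, so there is $r>0$ such that $\iota(\{|\zeta-\zeta_0|<r\})$ is a neighbourhood of $p$ in $\Delta$. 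However, $b\Delta$ could still meet $\Delta$ elsewhere, or even approach $p$ from outside, so this does not yet show $\min_{b\Delta}|z-p|>0$, and this is the main obstacle.

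If $p\in b\Delta$ turns out to be possible, I would fall back on a different argument: use the strong maximum principle for the subharmonic function $u$, which is strictly subharmonic and hence non-constant, so that $u(\zeta_0)<\sup u\le M$. This variant avoids the issue entirely, so it is the route I would actually take.
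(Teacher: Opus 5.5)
Your proposal is correct and essentially matches the paper's proof: pull back to $\mathbb{D}$, use the definition of the cluster set $b\Delta$ together with upper semicontinuity of $\phi$ to control $\phi\circ\iota$ near $\p\mathbb{D}$, and get strictness because the strictly subharmonic function $\phi\circ\iota$ (strict because $\iota$ is an immersion) cannot attain its maximum at an interior point of $\mathbb{D}$. You were right not to rely on the $\phi-c|z-p|^2$ route, since $b\Delta$ may meet $\Delta$ for a non-proper disc; the strong maximum principle on $\mathbb{D}$, which is the route the paper takes, avoids that issue.
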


The lemma implies, in particular, that $\Delta\subseteq\wh{b\Delta}$.

\begin{proof}
The function $\phi\circ\iota$ is sh on $\mathbb{D}$. Let $\zeta\in\mathbb{D}$ be such that $p=\iota(\zeta)$.
By the usual maximum principle, there is a sequence of
points $\zeta_k\to\partial\mathbb{D}$ 
such that $\phi\circ\iota(\zeta_k)\ge \phi\circ\iota(\zeta)$.
The sequence $\iota(\zeta_k)$ has a subsequence converging to $p_\infty\in\conj{\Delta}$
and $p_\infty\in b\Delta$ by the definition of $b\Delta$. 
Hence, 
$$
\phi(p)
\le \limsup\limits_{k\to\infty} \phi\circ\iota(\zeta_k)\le \phi(p_\infty)
\le \max\limits_{b\Delta}\phi.
$$
If $\phi$ is strictly psh and the equality is attained at $p=\iota(\zeta)\in\Delta$,
then the strictly sh function $\phi\circ\iota$ attains its maximum at an interior
point of $\mathbb{D}$, which is impossible (e.g., by Remark~\ref{strict-max}).
\end{proof}

\subsection{Dirichlet problem for Levi-flat graphs}
In $\C^2$ with coordinates $(z=x+iy,w=u+iv)$ consider the (open) cylinder
$$
\mathfrak{C} := B\times\R_v = \{(z,w)\in\C^2 \mid (z,u)\in B\}
$$
over an (open) Euclidean $3$-ball $B\subset\C_z\times\R_u$. 
Then
$$
\p\mathfrak{C} = S\times\R_v,
$$
where $S=\p B$ is a round $2$-sphere in $\C_z\times\R_u$.

\begin{thm}
\label{Nikolay}
Let $\phi:S\to \R_v$ be continuous and $\Gamma(\phi)\subset\p\mathfrak{C}$ its graph.  

\smallskip
\noindent
{\bf \;(i)} $\wh{\Gamma(\phi)} = \Gamma(\wh{\phi})$ for a continuous function $\wh{\phi}:\conj{B}\to\R_v$.

\smallskip
\noindent
{\bf (ii)} $\wh{\Gamma({\phi})}\cap\mathfrak{C}$ is a  
union of analytic discs with boundary in $\Gamma(\phi)$.
\end{thm}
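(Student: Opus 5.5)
The argument is Shcherbina's \cite{S1}, and the plan below follows it in three steps.

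\textbf{Where the hull lives.} I first locate $\wh{\Gamma(\phi)}$. It lies in the usual convex hull, hence in $\conj{B}\times[\min\phi,\max\phi]$. It cannot meet $\p\mathfrak{C}$ outside $\Gamma(\phi)$. To see this, apply Remark~\ref{strict-max} to the strictly psh function $|z|^2+u^2$ (after translating so that $B$ is centred at the origin): its maximum over $\conj{B}$ is attained exactly on $S\times\R_v$. So a point of $\wh{\Gamma(\phi)}-\Gamma(\phi)$ lying on $\p\mathfrak{C}$ would be a maximum point of a strictly psh function on the added hull, which is impossible.

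\textbf{Part (i).} To prove (i) I would define
$$
\phi^+(z,u)=\max\{v:(z,u+iv)\in\wh{\Gamma(\phi)}\},\qquad \phi^-(z,u)=\min\{\,\cdot\,\},
$$
and show three things: the fibres are nonempty, $\phi^+=\phi^-$, and the resulting function is continuous. Nonemptiness follows from a linking argument. If a vertical line over $(z_0,u_0)\in B$ missed the hull, the hull would have a polynomially convex neighbourhood avoiding that line. The sphere $\Gamma(\phi)$ links the line in $\conj{\mathfrak{C}}$, so a degree computation on the sphere gives a cohomological contradiction; one uses that $H^1$ of a polynomially convex set vanishes in $\C^2$, or equivalently argues with Alexander duality in the solid cylinder. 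For single-valuedness, the key tool is Theorem~\ref{max-phull} applied to the pluriharmonic functions $\pm v$ composed with vertical shifts. Suppose $\phi^+>\phi^-$ at some point. Shift the hull vertically by $t$ and compare with the original: the region between the two sheets would carry a hull point at which some psh function built from $v$ exceeds its boundary maximum. Concretely, I would compare $\wh{\Gamma(\phi)}$ with $\wh{\Gamma(\phi+t)}=s_t(\wh{\Gamma(\phi)})$ and use that the hull of a union of graphs is ordered. This is the same shift technique as in the proof of Proposition~\ref{loc-pol-conv}. Continuity then follows from upper semicontinuity of $\phi^+$ and lower semicontinuity of $\phi^-$, which come from compactness of the hull.

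\textbf{Part (ii).} For (ii) the plan is to show that $\Gamma(\wh{\phi})\cap\mathfrak{C}$ is locally the boundary of pseudoconvex domains on both sides, $\{v<\wh{\phi}\}$ and $\{v>\wh{\phi}\}$. Each side is pseudoconvex because its complement is locally a hull and satisfies the local maximum principle, in the manner of Proposition~\ref{loc-pol-conv}. A continuous Levi-flat graph of this type is then foliated by holomorphic curves. To obtain the discs I would approximate $\phi$ uniformly by smooth generic data and take the analytic discs attached to $\Gamma(\phi)$ constructed via Bishop discs at elliptic points, as in \cite{S1}. Passing to limits requires uniform area and diameter bounds. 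The area bound comes from the total variation of $v$ on the boundary; the diameter bound comes from the maximum principle on discs, Lemma~\ref{psh-on-discs}. With these bounds, limits of discs are analytic discs with boundary in $\Gamma(\phi)$ by Definition~\ref{disc-w-boundary}, and every point of $\wh{\Gamma(\phi)}\cap\mathfrak{C}$ lies on such a limit.

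\textbf{Main obstacle.} I expect the hardest step to be (ii), and specifically two points: ruling out degenerate limits of discs, such as bubbling or discs collapsing onto $S\times\R_v$, and proving injectivity and immersivity of the limits. My first attempt would use the strictly psh function $|z|^2+u^2$ to keep the limit discs off $\p\mathfrak{C}$ in the interior. I would then use Proposition~\ref{disc-touching-bd}-type uniqueness: two leaves through the same point of a Levi-flat graph must coincide. This gives injectivity.
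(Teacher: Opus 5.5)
The paper does not prove this theorem. It quotes it from \cite{S1}, with (i) taken from \cite{A,ST} and (ii) for smooth generic $\phi$ from \cite{BK,K}. So your outline has to stand on its own. Your localisation step is correct; it is the argument in the proof of Proposition~\ref{loc-pol-conv}. Beyond that, the outline has a genuine gap at the core of (ii) and two defects in (i).

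\textbf{Part (ii).} Once (i) is known, two-sided pseudoconvexity of $\Gamma(\wh\phi)\cap\mathfrak{C}$ is fine. It follows from S{\l}odkowski's theorem that in $\C^2$ a closed set with the local maximum property has locally pseudoconvex complement; Proposition~\ref{loc-pol-conv} goes in the opposite direction. However, the sentence ``a continuous Levi-flat graph of this type is then foliated by holomorphic curves'' is exactly the content of \cite{S1}, and your limiting scheme does not establish it.
\begin{itemize}
\item The ``uniform area bound from the total variation of $v$'' has no basis. By Wirtinger, the $w$-part of the area of a leaf $\Delta$ is the Dirichlet integral $\int_\Delta|\nabla v|^2$ of the harmonic function $v|_\Delta$. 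For merely continuous boundary data this integral need not even be finite. The natural compactness tool is different: a complex curve in the graph has no vertical tangents, so leaves are locally graphs $w=g(z)$ with $|g|$ bounded, and Montel applies.
\item More seriously, you leave open the real issue: that the limit through $p$ is a single injectively immersed disc, properly embedded in $\mathfrak{C}$, so that its boundary cluster set lies in $\p\mathfrak{C}\cap\wh{\Gamma(\phi)}=\Gamma(\phi)$. The function $|z|^2+u^2$ only keeps the added hull off $\p\mathfrak{C}$. Proposition~\ref{disc-touching-bd} is not a uniqueness statement for leaves; it only says a disc in $\conj\Omega$ lies in $\Omega$ or in $\p\Omega$. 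The Rouch\'e argument of Lemma~\ref{Rouche} presupposes a leaf already known to be a closed hypersurface $\{f=0\}$ of $\mathfrak{C}$, i.e.\ it presupposes (ii).
\end{itemize}

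\textbf{Part (i), nonemptiness of fibres.} The cohomological input is misstated. $H^1$ of a polynomially convex set in $\C^2$ need not vanish: the circle
$$\{(\zeta,\bar\zeta):|\zeta|=1\}=\{zw=1,\ |z|\le1,\ |w|\le1\}$$
is polynomially convex. What the linking argument needs is Browder's theorem $\check H^2(\wh{Y};\C)=0$ for compact $Y\subset\C^2$. It forces the generator of $H^2(\C^2-L;\C)$, which evaluates to $\pm1$ on $\Gamma(\phi)$, to restrict to zero on $\wh{\Gamma(\phi)}$, and hence on $\Gamma(\phi)$, a contradiction.

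\textbf{Part (i), single-valuedness.} The local maximum principle for $\pm v$ only excludes interior extrema of $\phi^\pm$; a slab between two Levi-flat sheets satisfies it. Your appeal to ``the hull of a union of graphs is ordered'' is Corollary~\ref{hull-monoton}, which the paper deduces from (i) and (ii) via Lemma~\ref{psc-contain}, so it cannot be used here. The missing step is that $D^-=\{v<\phi^-\}\cap\mathfrak{C}$ is pseudoconvex. One way to see this:
\begin{itemize}
\item Write $D^-=\bigcap_{t\ge0}\bigl(\mathfrak{C}-s_t(\wh{\Gamma(\phi)})\bigr)$.
\item Each term is locally pseudoconvex at its boundary points in $\mathfrak{C}$, by S{\l}odkowski's theorem applied to the translated hulls.
\item Then $-\log$ of the boundary distance of $D^-$ is the supremum of the corresponding psh functions, hence psh.
\end{itemize}
With this in hand, your instinct to reuse the shift technique is right. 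Run the exhaustion-and-shift argument of Proposition~\ref{loc-pol-conv} with $F=\phi^-+v^*$, where $v^*=\max_{\wh{\Gamma(\phi)}}(v-\phi^-)$ is attained by upper semicontinuity. Use also that $\phi^\pm\to\phi$ at $S$, which you need for the Grauert bump. This forces $v^*=0$.
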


This is a simplified version of the Main Theorem on p.~478 of~\cite{S1}.
The (easier) first assertion was proven earlier in \cite{A} and \cite{ST}.
If $\phi$ is smooth and sufficiently generic,
assertion (ii) follows from~\cite{BK} or~\cite{K}.

\begin{exm}
\label{hull-const}
If $\phi\equiv v_0\in\R_v$ is constant, then the polynomially convex hull of $\Gamma(\phi)$
is the Levi-flat ball $\conj{B}\times\{v_0\}$ and $\wh{\phi}\equiv v_0$.
\end{exm}

\subsection{Monotonicity of hulls and propagation of intersections}
The hulls described by Theorem~\ref{Nikolay} have a number of useful geometric properties.

\begin{lem}
\label{psc-contain}
Let $F:\conj{B}\to\R_v$ be a continuous function such that the domain
$$
\Omega_F := \{(z,w)\in\mathfrak{C} \mid v<F(z,u) \}
$$
is pseudoconvex. If $\phi\le F$ on $S$, then $\wh{\phi}\le F$ on $\conj{B}$.
\end{lem}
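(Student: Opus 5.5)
We must show that if $\phi\le F$ on $S$, then $\wh{\phi}\le F$ on $\conj{B}$, that is, $\wh{\Gamma(\phi)}\subset\conj{\Omega_F}$. The plan is to imitate the proof of Proposition~\ref{loc-pol-conv}: push the graph up vertically, then use a psh exhaustion of a slightly enlarged pseudoconvex domain together with the local maximum principle (Theorem~\ref{max-phull}).

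First reduce to strict inequality. Fix $\delta>0$ and put $F_\delta:=F+\delta$. Then $\Omega_{F_\delta}=s_\delta(\Omega_F)$ is pseudoconvex and $\phi<F_\delta$ on $S$. If we prove $\wh{\phi}\le F_\delta$ for every $\delta>0$, letting $\delta\to0$ gives the lemma. So assume from now on that $\phi<F$ on $S$, and suppose, for contradiction, that
$$
v^*:=\max\limits_{\conj{B}}(\wh{\phi}-F)>0 .
$$
Since $\wh{\phi}=\phi<F$ on $S$, this maximum is attained at an interior point $(z,u)\in B$. The corresponding point $q=(z,u+i\wh{\phi}(z,u))\in\wh{\Gamma(\phi)}$ lies on the graph $\Gamma(v^*+F)$, and $\wh{\Gamma(\phi)}\subset\conj{\Omega_{v^*+F}}$.

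Next build a pseudoconvex neighbourhood of the pushed-down graphs. For $t\in[-\eps,0]$ with $\eps$ small, the sets $s_t(\Gamma(\phi))$ lie over $S$ and strictly below $v^*+F$. As in the proof of Proposition~\ref{loc-pol-conv}, the cylinder wall $S\times\R_v$ is strictly pseudoconvex, so a Grauert bump produces a neighbourhood $V$ of $X=\bigcup_{t\in[-\eps,0]}s_t(\Gamma(\phi))$, disjoint from $\Gamma(v^*+F)$, such that
$$
\wt{\Omega}=\Omega_{v^*+F}\cup V
$$
is pseudoconvex. Take a psh exhaustion $\psi$ of $\wt{\Omega}$. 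For $t<0$ we have $s_t(\wh{\Gamma(\phi)})=\wh{s_t(\Gamma(\phi))}\Subset\wt{\Omega}$. Because $s_t(q)\to q\in\p\wt{\Omega}$ as $t\to0^-$, we can choose $t_0\in[-\eps,0)$ with
$$
\psi(s_{t_0}(q))>\max\limits_X\psi\ge\max\limits_{s_{t_0}(\Gamma(\phi))}\psi .
$$
Theorem~\ref{max-phull}, applied to the compact set $s_{t_0}(\Gamma(\phi))$ and to $U=U'-s_{t_0}(\Gamma(\phi))$ with $\wh{s_{t_0}(\Gamma(\phi))}\subset U'\Subset\wt{\Omega}$, then gives a contradiction, since $\p U\cap\wh{\;\cdot\;}\subset s_{t_0}(\Gamma(\phi))$.

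The main obstacle is the bump step. Unlike Proposition~\ref{loc-pol-conv}, the compact set $X$ sits on $\p\mathfrak{C}$ directly, not inside the vertical wall of a compact set already in $\conj{\Omega_F}$. Near $S$, the domain $\Omega_{v^*+F}$ is bounded by the wall $S\times\R_v$ (strictly pseudoconvex) meeting the graph $\Gamma(v^*+F)$ along a corner. However, $X$ stays a positive distance below the graph near $S$, by the strict inequality and continuity of $F$ on $\conj{B}$. So the bump only needs to enlarge $\Omega_{v^*+F}$ across the strictly pseudoconvex part of the wall, away from the corner. This is exactly the situation handled in Proposition~\ref{loc-pol-conv}, and it is where the continuity of $F$ up to $\p B$ is used.
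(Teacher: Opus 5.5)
Your argument is correct, but it takes the long way round. You re-run the proof of Proposition~\ref{loc-pol-conv} with $\Gamma(\phi)$ in place of $K(v_-,F)$, whereas the paper simply applies that proposition together with monotonicity of hulls. Choose $v_-\le\min\{\min_S\phi,\ \min_{\conj{B}}F\}$. Then $\Gamma(\phi)\subset K(v_-,F)$, because $v_-\le\phi\le F$ on $S$. Since $K(v_-,F)$ is polynomially convex by Proposition~\ref{loc-pol-conv}, we get $\Gamma(\wh{\phi})=\wh{\Gamma(\phi)}\subset K(v_-,F)$, which is exactly $\wh{\phi}\le F$ on $\conj{B}$.

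Your individual steps all hold up:
\begin{itemize}
\item the maximum point $q$ lies over $B$;
\item $s_t(\Gamma(\wh{\phi}))\Subset\wt{\Omega}$ for $t\in[-\eps,0)$;
\item $q\in\p\wt{\Omega}$ forces $\psi(s_t(q))\to+\infty$ as $t\to 0^-$;
\item Theorem~\ref{max-phull} then yields the contradiction.
\end{itemize}
However, they buy nothing beyond the one-line reduction.

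Two of your precautions are also superfluous. First, the reduction to $\phi<F$ is unnecessary: $v^*>0$ already gives $\phi\le F<v^*+F$ on $S$, and this places the maximum of $\wh{\phi}-F$ in the interior. Second, the bump step you flag as the main obstacle is not a new difficulty. Your set $X=\bigcup_{t\in[-\eps,0]}s_t(\Gamma(\phi))$ is a compact subset of the strictly pseudoconvex wall $S\times\R_v$ lying below $\Gamma(v^*+F)$. That is precisely the situation of the set $Y$ in the proof of Proposition~\ref{loc-pol-conv}; indeed $s_t(\Gamma(\phi))\subset s_t(K(v_-,F))\cap(S\times\R_v)$. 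So the argument given there already covers it.
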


\begin{proof} 
Follows immediately from Proposition~\ref{loc-pol-conv} and $\wh{\Gamma(\phi)} = \Gamma(\wh{\phi})$.
\end{proof}

\begin{cor}
\label{hull-monoton}
If $\phi_-\le\phi_+$ on $S$, then $\wh{\phi}_-\le\wh{\phi}_+$ on $B$.
\end{cor}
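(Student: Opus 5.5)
The idea is to apply Lemma~\ref{psc-contain} with $\phi=\phi_-$ and with $F=\wh{\phi}_+$, extended from $\conj{B}$ by continuity. For this we need two facts about $F=\wh{\phi}_+$: that it is continuous on $\conj{B}$, and that the domain $\Omega_{\wh{\phi}_+}=\{(z,w)\in\mathfrak{C}\mid v<\wh{\phi}_+(z,u)\}$ is pseudoconvex. We also need $\phi_-\le\wh{\phi}_+$ on $S$. That last point is immediate: $\wh{\phi}_+=\phi_+$ on $S$ because $\wh{\Gamma(\phi_+)}\cap\p\mathfrak{C}=\Gamma(\phi_+)$, so the inequality is just the hypothesis $\phi_-\le\phi_+$. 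Continuity is part of Theorem~\ref{Nikolay}(i). Once both facts are in place, Lemma~\ref{psc-contain} gives $\wh{\phi}_-\le\wh{\phi}_+$ on $\conj{B}$, and in particular on $B$.

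The main step is the pseudoconvexity of $\Omega_{\wh{\phi}_+}$. I would use Theorem~\ref{Nikolay}(ii). Over $B$, the graph $\Gamma(\wh{\phi}_+)\cap\mathfrak{C}$ is a union of analytic discs, which makes it a Levi-flat-like continuous graph. A continuous graph $v=F(z,u)$ foliated by holomorphic discs has $-\log$ of the vertical distance plurisubharmonic on both sides; this is the Hartogs-type criterion that the boundary is locally pseudoconcave from neither side. I would make this precise with the Kontinuitätssatz. Take a family of discs $D_t\subset\mathfrak{C}$ with boundaries in $\Omega_F$ and suppose the family first touches $\Gamma(F)$ at an interior point. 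Translating a disc of the foliation vertically, $s_t$ moves $\Gamma(F)$ while preserving the disc structure, so one can compare with the translated disc through the touching point. The function $v-F$ then fails the maximum principle on that disc.

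An alternative, which I would try first because it avoids foliation arguments, goes through hulls directly. By Proposition~\ref{loc-pol-conv} applied in reverse, it suffices that sets of the form $K(v_-,F)$ be polynomially convex. Set $v_-<\min\phi_+$. Then
\[
K(v_-,\wh{\phi}_+)=\bigcup_{c\in[v_-,\,\min\phi_+]}\Gamma\bigl(\wh{\max(\phi_+\ldots)}\bigr)
\]
is awkward to write down, so instead I would argue by the local maximum principle as follows. Suppose $q\in\wh{\Gamma(\phi_-)}$ has $v(q)>\wh{\phi}_+(z,u)$. The function $\rho=|z|^2+u^2$ together with vertical translations $s_t$ gives the following: the disc of $\wh{\Gamma(\phi_+)}$ through $(z,u)$, shifted upward by $t$, has boundary in $s_t\Gamma(\phi_+)$, which lies above $\Gamma(\phi_-)$ on $S$. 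Take the maximal $t^*$ for which $s_{t^*}\Gamma(\wh{\phi}_+)$ meets $\wh{\Gamma(\phi_-)}$ at an interior point. At that point, the function $v$ restricted to the analytic disc of $\wh{\Gamma(\phi_-)}$ through the contact point lies below the translated disc of $\phi_+$ and touches it. Writing the $\phi_+$-disc locally as $v=h(z,u)$ with $h$ harmonic along the disc, the function $v-t^*-\wh{\phi}_+$ restricted to the $\phi_-$-disc attains an interior maximum $0$ at the contact point. Since $\wh{\phi}_+$ is the boundary of a pseudoconvex-from-below region locally (the upper side of a disc union), this should contradict the strong maximum principle unless the discs coincide. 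If they coincide, we propagate the contact to the boundary $S$, where $\phi_-<\phi_++t^*$, which is a contradiction.

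The hardest point is justifying pseudoconvexity of $\Omega_{\wh{\phi}_+}$, equivalently that $v-\wh{\phi}_+$ is psh-like, from the disc structure of Theorem~\ref{Nikolay}(ii) with merely continuous $\wh{\phi}_+$. Should that fail, I would fall back on approximating $\phi_+$ from above by smooth generic functions, for which the discs form a genuine continuous Levi-flat foliation by \cite{BK}, and passing to the limit using continuity of $\phi\mapsto\wh{\phi}$.
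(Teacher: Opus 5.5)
Your primary route is the paper's own proof. You apply Lemma~\ref{psc-contain} with $F=\wh{\phi}_+$ and $\phi=\phi_-$, using $\wh{\phi}_+=\phi_+$ on $S$, continuity from Theorem~\ref{Nikolay}(i), and pseudoconvexity of $\Omega_{\wh{\phi}_+}$ from the disc structure in Theorem~\ref{Nikolay}(ii); the paper simply asserts this last step. The cleanest justification for it is the mechanism of Lemma~\ref{Rouche}: through each boundary point in $\mathfrak{C}$ passes a leaf $\{f=0\}$ with $f\in\OO(\mathfrak{C})$, disjoint from $\Omega_{\wh{\phi}_+}$. You do not need a `$-\log$ of vertical distance' criterion, your maximal-translation alternative, or the smoothing fallback.
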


\begin{proof}
$\Gamma(\wh{\phi}_+)\cap \mathfrak{C}$ is a union of analytic discs and hence 
$\mathfrak{C}-\Gamma(\wh{\phi}_+)$ is pseudoconvex. So Lemma~\ref{psc-contain}
applies with $F=\wh{\phi}_+$ and $\phi=\phi_-$.
\end{proof}

\begin{cor}
\label{between-graphs}
If $\phi\ge v_-$ on $S$, then the compact set
$$
K(v_-,\phi):=\{(z,w)\in\C^2\mid (z,u)\in\conj{B}, v\in [v_-,\wh{\phi}(z,u)]\}
$$
is polynomially convex.
\end{cor}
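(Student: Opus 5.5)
This should follow directly from Proposition~\ref{loc-pol-conv} with $F=\wh{\phi}$, in the same way as Corollary~\ref{hull-monoton}.

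First, by Theorem~\ref{Nikolay}(i) the function $\wh{\phi}:\conj{B}\to\R_v$ is continuous, so $K(v_-,\phi)$ is compact and has exactly the form $K(v_-,F)$ of Proposition~\ref{loc-pol-conv} with $F=\wh{\phi}$. Two hypotheses of that proposition then need to be checked.

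\emph{Lower bound $\wh{\phi}\ge v_-$ on $\conj{B}$.} I would apply Corollary~\ref{hull-monoton} to the constant function $\phi_-\equiv v_-$ and $\phi_+=\phi$. By Example~\ref{hull-const} the hull of the constant is $\wh{\phi}_-\equiv v_-$, so $v_-\le\wh{\phi}$ on $B$. On $S$ we have $\wh{\phi}=\phi\ge v_-$, since $\Gamma(\phi)\subseteq\Gamma(\wh{\phi})$ and the graph over $S$ is unique. Hence the bound holds on all of $\conj{B}$.

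\emph{Pseudoconvexity of $\Omega_{\wh{\phi}}=\{(z,w)\in\mathfrak{C}\mid v<\wh{\phi}(z,u)\}$.} By Theorem~\ref{Nikolay}(ii), $\Gamma(\wh{\phi})\cap\mathfrak{C}$ is a union of analytic discs, which is the fact already used in the proof of Corollary~\ref{hull-monoton}. I would argue that the complement $\mathfrak{C}-\Gamma(\wh{\phi})$ is pseudoconvex because its boundary inside the pseudoconvex cylinder $\mathfrak{C}$ is foliated by complex curves, so it is Levi-flat in the generalized sense. One can see this via the Hartogs-figure or continuity-principle characterization: a family of discs approaching the graph from below cannot cross an analytic disc lying in the graph. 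The set $\Omega_{\wh{\phi}}$ is a connected component of $\mathfrak{C}-\Gamma(\wh{\phi})$, namely the one below the graph, so it is pseudoconvex. This is the step I expect to be the only delicate point. However, the paper already accepts it in Corollary~\ref{hull-monoton}, so I will cite that reasoning rather than reprove it.

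With both hypotheses in place, Proposition~\ref{loc-pol-conv} gives that $K(v_-,\wh{\phi})=K(v_-,\phi)$ is polynomially convex.
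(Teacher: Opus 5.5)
Your proposal is correct and is essentially the paper's proof. The paper also gets $\wh{\phi}\ge v_-$ from Corollary~\ref{hull-monoton} and the pseudoconvexity of $\mathfrak{C}-\Gamma(\wh{\phi})$ from Theorem~\ref{Nikolay}(ii), and then applies Proposition~\ref{loc-pol-conv} with $F=\wh{\phi}$. Your explicit check that $\wh{\phi}=\phi\ge v_-$ on $S$ fills in a detail the paper leaves implicit. The pseudoconvexity step is justified more cleanly than by your continuity-principle sketch: write each disc as $\{f=0\}$ in the convex cylinder $\mathfrak{C}$, as in Lemma~\ref{Rouche}, so that $\mathfrak{C}-\Gamma(\wh{\phi})$ is the interior of an intersection of pseudoconvex sets.
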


\begin{proof}
The complement to $\Gamma(\wh{\phi})$ is pseudoconvex and $\wh{\phi}\ge v_-$ on $B$,
e.g., by Corollary~\ref{hull-monoton}. Hence, the result follows from Proposition~\ref{loc-pol-conv}.
\end{proof}

\begin{lem}
\label{Rouche}
Let $j:\Sigma\to\mathfrak{C}$ be a holomorphic map from a connected 
Riemann surface. 
Suppose that $j(\Sigma)\subset \{ (z,w)\in\mathfrak{C}\mid v\le \wh{\phi}(z,u)\}$
for a continuous function $\phi:S\to\R_v$. 
If $j(\Sigma)\cap\Gamma(\wh{\phi})\neq \varnothing$,
then $j(\Sigma)$ is contained in an analytic disc in $\mathfrak{C}$ 
with boundary in $\Gamma(\phi)$.
\end{lem}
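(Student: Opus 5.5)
Pick a point $q_0=j(\zeta_0)\in j(\Sigma)\cap\Gamma(\wh{\phi})$. Since $q_0\in\mathfrak{C}$, Theorem~\ref{Nikolay}(ii) gives an analytic disc $\Delta\subset\Gamma(\wh{\phi})\cap\mathfrak{C}$ through $q_0$ with boundary in $\Gamma(\phi)$. The plan is to show that $j$ maps a neighbourhood of $\zeta_0$ into $\Delta$, and then to propagate this to all of $\Sigma$ by connectedness.

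For the local step, consider the open set $\Omega_{\wh{\phi}}=\{v<\wh{\phi}(z,u)\}\cap\mathfrak{C}$. It is pseudoconvex because $\Gamma(\wh{\phi})\cap\mathfrak{C}$ is a union of analytic discs; this is the same remark used in the proof of Corollary~\ref{hull-monoton}. The domain has simple boundary, being the subgraph of a continuous function. We have $j(\Sigma)\subset\conj{\Omega_{\wh{\phi}}}$. Take a small disc $D\ni\zeta_0$ in $\Sigma$. If $j$ is constant there is nothing to prove. Otherwise $j(D)$ is (the image of) an analytic disc in the closure of a pseudoconvex domain with simple boundary, and it meets the boundary at $q_0$. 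By Proposition~\ref{disc-touching-bd}, $j(D)\subset\Gamma(\wh{\phi})$. If $j(D)$ is singular, pass to its normalization or a branch; Proposition~\ref{disc-touching-bd} applies equally to holomorphic images of discs, via the subharmonicity argument of Kerzman--Rosay.

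So $j(D)$ is a complex curve inside the Levi-flat graph $\Gamma(\wh{\phi})\cap\mathfrak{C}$ passing through $q_0\in\Delta$. The crux is to show $j(D)\subset\Delta$. I would argue by a Rouch\'e-type intersection argument, which the name of the lemma suggests. Suppose $j(D)\not\subset\Delta$. Then the two curves meet at $q_0$ with isolated, positive intersection multiplicity. Now shift $\Delta$ downward, $s_{-t}(\Delta)$ with small $t>0$; by positivity of intersections this still meets $j(D)$ near $q_0$. But $s_{-t}(\Delta)$ lies strictly below $\Gamma(\wh{\phi})$ over its projection, while $j(D)\subset\Gamma(\wh{\phi})$, so they cannot meet. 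Stated precisely: a point of $s_{-t}(\Delta)$ has $v$-coordinate equal to $\wh{\phi}(z,u)-t<\wh{\phi}(z,u)$, whereas a point of $j(D)$ has $v=\wh{\phi}(z,u)$. This contradiction gives $j(D)\subset\Delta$.

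The intersection count must be stable under the shift, which I would verify locally in a chart where $\Delta$ is $\{g=0\}$ for a holomorphic $g$, applying the classical Rouch\'e theorem to $g\circ s_{t}\circ j$ on a small circle around $\zeta_0$. I expect this step to be the main obstacle: one must make sure $g\circ j$ has an isolated zero at $\zeta_0$ and that the perturbation is uniformly small on the circle.

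For the globalisation step, let $A=j^{-1}(\Delta)\cap\{\zeta : j \text{ maps a neighbourhood of }\zeta\text{ into }\Delta\}$. This set is open by definition and nonempty by the local step. It is also closed. If $\zeta_k\to\zeta_\infty$ with $\zeta_k\in A$, then by the identity theorem $j(\Sigma')$ lies in the closure of $\Delta$ near $\zeta_\infty$. Moreover $j(\zeta_\infty)\in\mathfrak{C}$, and $\Delta$ is closed in $\mathfrak{C}$ because $b\Delta\subset\Gamma(\phi)\subset\p\mathfrak{C}$. Hence $j(\zeta_\infty)\in\Delta$, and the local step at that point gives $\zeta_\infty\in A$. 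Since $\Sigma$ is connected, $A=\Sigma$, so $j(\Sigma)\subset\Delta$, which is the claim of the lemma.
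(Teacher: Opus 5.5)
Your proof is correct. It shares the paper's core idea (Rouch\'e's theorem applied to shifted copies of $\Delta$) but differs in two ways that make it longer. First, the paper shifts $\Delta$ \emph{upward}: for $t>0$ the disc $s_t(\Delta)$ lies in $\{v=\wh{\phi}(z,u)+t\}$, so it misses $j(\Sigma)\subset\{v\le\wh{\phi}(z,u)\}$ by the hypothesis alone. You shift downward, which forces you first to push $j(D)$ into $\Gamma(\wh{\phi})$. For that you need pseudoconvexity of $\{v<\wh{\phi}\}$ and the Kerzman--Rosay Proposition~\ref{disc-touching-bd}, and in a version for possibly branched holomorphic discs, since $j$ may have critical points. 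With the other sign all of this is unnecessary. The hypothesis $v\le\wh{\phi}$ is exactly what the lemma needs (cf.\ Corollary~\ref{common-disc}, where the disc lies only below the graph). Second, the paper uses that $\Delta$ is a closed complex hypersurface of the convex cylinder $\mathfrak{C}$ to solve the second Cousin problem, writing $\Delta=\{f=0\}$ with $f\in\OO(\mathfrak{C})$. Hurwitz's theorem for the family $f\circ s_{-t}\circ j$ on the connected surface $\Sigma$ (zero-free for $t>0$, vanishing somewhere for $t=0$) then gives $f\circ j\equiv 0$ on all of $\Sigma$ at once. Your local defining function plus open--closed globalisation avoids Cousin~II, but it relies on the same closedness of $\Delta$ in $\mathfrak{C}$ and costs an extra step. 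In your closedness step you only need $j(\zeta_\infty)\in\conj{\Delta}\cap\mathfrak{C}=\Delta$, by continuity. Overall the paper's argument is shorter and uses fewer external inputs.
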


\begin{proof}
Let $p\in\mathfrak{C}$ be an intersection point 
of $j(\Sigma)$ with $\Gamma(\wh{\phi})$.
There exists an analytic disc $\Delta\subset\Gamma(\wh{\phi})\cap\mathfrak{C}$ 
with boundary in $\Gamma(\phi)\subset\p\mathfrak{C}$
passing through $p$ by Theorem~\ref{Nikolay}(ii). 
As observed after Definition~\ref{disc-w-boundary}, 
$\Delta$ is a codimension one complex submanifold 
of~$\mathfrak{C}$. Since $\mathfrak{C}$ is
convex, the 2nd Cousin problem is solvable in it, e.g., by~\cite[Theorem 7.4.4]{H} 
and $\Delta=\{f=0\}$ for a holomorphic 
function $f\in\OO(\mathfrak{C})$.

By our assumptions, the shifted discs $\Delta_t=s_t(\Delta)=\{f\circ s_{-t}=0\}$,
where $s_t(z,w)=(z,w+it)$, 
do not intersect $j(\Sigma)$ for $t>0$.
Hence, $f_t:=f\circ  s_{-t}\circ j$ is a continuous family of holomorphic functions on $\Sigma$ such that
$f_0$ has a zero and $f_t$, $t>0$, do not. By Rouch\'e's theorem, this is only possible if $f_0\equiv 0$ 
on~$\Sigma$. Hence, $j(\Sigma)\subseteq\Delta$.
\end{proof}

\begin{cor}
\label{common-disc}
Suppose that $\phi_-\le\phi_+$ on $S$. If ${\Gamma(\wh{\phi}_-)}$ and ${\Gamma(\wh{\phi}_+)}$ 
intersect at~$p\in\mathfrak{C}$, then their intersection contains an analytic disc $\Delta\subset\mathfrak{C}$ through $p$ with boundary
in $\Gamma(\phi_-)\cap \Gamma(\phi_+)$.
\end{cor}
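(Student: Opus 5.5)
Since $\wh{\phi}_-\le\wh{\phi}_+$ on $B$ by Corollary~\ref{hull-monoton}, the plan is to apply Lemma~\ref{Rouche} to the function $\phi_+$, using an analytic disc through $p$ that lies in the graph of $\wh{\phi}_-$. By Theorem~\ref{Nikolay}(ii) applied to $\phi_-$, there is an analytic disc $\Delta_-\subset\Gamma(\wh{\phi}_-)\cap\mathfrak{C}$ through $p$ with boundary in $\Gamma(\phi_-)$. Take $\Sigma=\mathbb{D}$ and $j=\iota_-$, its parametrisation. Then $j(\Sigma)\subset\{v\le\wh{\phi}_+(z,u)\}$, because points of $\Delta_-$ satisfy $v=\wh{\phi}_-\le\wh{\phi}_+$. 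Moreover $j(\Sigma)$ meets $\Gamma(\wh{\phi}_+)$ at $p$. So Lemma~\ref{Rouche} gives an analytic disc $\Delta_+\subset\mathfrak{C}$ with boundary in $\Gamma(\phi_+)$ containing $\Delta_-$. Looking at the proof of that lemma, $\Delta_+$ is the disc through $p$ contained in $\Gamma(\wh{\phi}_+)$, so $\Delta_-\subseteq\Gamma(\wh{\phi}_-)\cap\Gamma(\wh{\phi}_+)$.

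It remains to show $b\Delta_-\subset\Gamma(\phi_-)\cap\Gamma(\phi_+)$; this is the main point. We already have $b\Delta_-\subset\Gamma(\phi_-)\subset\p\mathfrak{C}$. Since $\Delta_-\subseteq\Delta_+\subset\Gamma(\wh{\phi}_+)$ and $\wh{\phi}_+$ is continuous on $\conj{B}$, the closure $\conj{\Delta_-}$ lies in the closed set $\Gamma(\wh{\phi}_+)$ over $\conj B$. Cluster points of $\Delta_-$ on $\p\mathfrak{C}$ are therefore in $\Gamma(\wh{\phi}_+)\cap\p\mathfrak{C}$.

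This set equals $\Gamma(\phi_+)$, because $\wh{\phi}_+=\phi_+$ on $S$. To see this, note $\wh{\Gamma(\phi_+)}\cap\p\mathfrak{C}=\Gamma(\phi_+)$: the hull lies over $\conj B$, and over $S$ the strictly psh function $|z|^2+u^2$ shows (as in the proof of Proposition~\ref{loc-pol-conv}) that no hull points other than those of $\Gamma(\phi_+)$ lie there. Alternatively, $\Gamma(\wh{\phi})\supseteq\Gamma(\phi)$ and is a graph. Hence $b\Delta_-\subset\Gamma(\phi_-)\cap\Gamma(\phi_+)$, and $\Delta:=\Delta_-$ is the required disc.

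The only delicate point is confirming that the disc from Lemma~\ref{Rouche} sits inside $\Gamma(\wh{\phi}_+)$ and not merely in some disc with boundary in $\Gamma(\phi_+)$. This is clear from that lemma's proof, where $\Delta$ is chosen in $\Gamma(\wh{\phi}_+)$. If one prefers not to rely on the proof, one can instead argue directly with Rouché's theorem: take the disc $\Delta_+\subset\Gamma(\wh{\phi}_+)$ through $p$ with defining function $f$, and observe that $f\circ\iota_-$ has a zero while its upward $v$-shifts do not.
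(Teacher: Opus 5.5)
Your proof is correct and follows the paper's route. You take a disc through $p$ in $\Gamma(\wh{\phi}_-)$ from Theorem~\ref{Nikolay}(ii), get monotonicity from Corollary~\ref{hull-monoton}, and apply Lemma~\ref{Rouche} with $\phi_+$.

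The only difference is the final step. The paper notes that $\Delta_-\subseteq\Delta_+$ forces $\Delta_-=\Delta_+$, because $\Delta_-$ is closed in $\mathfrak{C}$ and open in the connected disc $\Delta_+$; both boundary conditions then hold at once. You instead read off $b\Delta_-\subset\Gamma(\wh{\phi}_+)\cap\p\mathfrak{C}=\Gamma(\phi_+)$ directly. Your worry about whether $\Delta_+$ lies in $\Gamma(\wh{\phi}_+)$ is settled without looking into the lemma's proof: $\Delta_+\subseteq\wh{b\Delta_+}\subseteq\wh{\Gamma(\phi_+)}=\Gamma(\wh{\phi}_+)$ by Lemma~\ref{psh-on-discs}.
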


\begin{proof}
Let $\Delta=\iota(\mathbb{D})\subset\Gamma(\wh{\phi}_-)$ be an analytic disc through $p$ with boundary in $\Gamma(\phi_-)$
provided by Theorem~\ref{Nikolay}(ii). By Corollary~\ref{hull-monoton}, 
the graph of $\wh{\phi}_-$ lies below the graph of $\wh{\phi}_+$. 
Therefore $\iota:\mathbb{D}\to\mathfrak{C}$
satisfies the assumptions of Lemma~\ref{Rouche} for the function $\phi=\phi_+$ on~$S$.
Hence, $\Delta$ is properly embedded in an analytic disc $\Delta'\subset\mathfrak{C}$ with boundary in $\Gamma(\phi_+)$.
The two discs must coincide and have boundary in $\Gamma(\phi_-)\cap \Gamma(\phi_+)$.
\end{proof}

\section{Hulls and closures of domains}

\subsection{Hulls and hyperconvex domains in $\C^n$}
\label{subsec-h-and-c-Cn}
Recall that a bounded domain $\Omega\subset\C^n$ is called {\it hyperconvex\/} 
if it admits a continuous psh bounded exhaustion function.
We will assume that this function is negative in $\Omega$
and extended continuously to $\p\Omega$ by zero. 
By the main result of~\cite{C}, a {\it sufficient\/}
condition for a bounded pseudoconvex domain
with simple boundary in $\C^n$ to be hyperconvex 
is the H\"older continuity of its boundary
(i.e.\ of the function $F$ in Definition~\ref{simple-boundary}).

The elementary argument in the proof of the following proposition goes
back to~\cite[Proposition 2]{HS}, where the result was
proven for domains with smooth boundaries.

\begin{prop}
\label{boundary-hull-n}
If $\Omega$ is a hyperconvex domain in $\C^n$, then
$$
\ph{K}\mathop{\cap} \p\Omega = \phbig{K\mathop{\cap} \p\Omega} \mathop{\cap} \p\Omega
$$
for all compact subsets $K\Subset\conj{\Omega}$.
\end{prop}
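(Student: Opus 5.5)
The inclusion ``$\supseteq$'' is immediate from monotonicity: since $K\cap\p\Omega\subseteq K$, every $\phi\in P(\conj{\Omega})$ satisfies $\max_{K\cap\p\Omega}\phi\le\max_K\phi$. Hence
$$
\phbig{K\cap\p\Omega}\subseteq\ph{K}.
$$
Intersecting both sides with $\p\Omega$ gives ``$\supseteq$''. The real content is the reverse inclusion, and I would prove it by contradiction using the bounded exhaustion of $\Omega$.

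Suppose $p\in\ph{K}\cap\p\Omega$ but $p\notin\phbig{K\cap\p\Omega}$. Then there is $\phi\in P(\conj{\Omega})$ with $\phi(p)>\max_{K\cap\p\Omega}\phi$. After adding a constant we may assume $\phi(p)>0$ and $\phi<0$ on $K\cap\p\Omega$. By continuity, $\phi<0$ on $K\cap W$ for some neighbourhood $W$ of $K\cap\p\Omega$. The remaining part $K-W$ is a compact subset of $\Omega$.

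Let $\rho$ be the negative continuous psh exhaustion of $\Omega$, extended by $0$ to $\p\Omega$. Then $\rho\le -c<0$ on $K-W$ for some $c>0$, and $\rho\le 0$ everywhere on $\conj{\Omega}$. Choose $C>0$ so large that $\phi+C\rho<0$ on $K-W$; this is possible because $\phi$ is bounded above on the compact set $\conj{\Omega}$. On $K\cap W$ we still have $\phi+C\rho\le\phi<0$. So $\phi+C\rho<0$ on all of $K$.

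At the boundary point $p$ we have $\rho(p)=0$, so $(\phi+C\rho)(p)=\phi(p)>0$. Moreover $\phi+C\rho\in P(\conj{\Omega})$, being a sum of functions in this cone with $C>0$. This contradicts $p\in\ph{K}$, and the proposition follows.

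The only delicate point is ensuring $\rho\in P(\conj{\Omega})$. This requires $\rho$ to be continuous up to the boundary with $\rho|_{\p\Omega}=0$, which is exactly the normalisation adopted just before the statement. Once that is in place, the argument is a direct perturbation, and the role of hyperconvexity is precisely to supply a function in $P(\conj{\Omega})$ that vanishes on $\p\Omega$ and is strictly negative on the compact part of $K$ lying inside $\Omega$.
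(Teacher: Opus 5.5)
Your proof is correct and is essentially the paper's argument. The paper fixes $\phi$ with $m^*=\max_{\ph{K}\cap\p\Omega}\phi>\max_{K\cap\p\Omega}\phi$ and isolates $K^*=\{q\in K\mid \phi(q)\ge m^*\}\Subset\Omega$, playing the role of your $K-W$, and then adds $C\psi$ for the hyperconvex exhaustion $\psi$ exactly as you do; your pointwise formulation at $p$ is equivalent.
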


\begin{proof}
The inclusion $\supseteq$ is obvious. 
To prove that $\subseteq$ holds, 
suppose that there exists $\phi\in P(\conj{\Omega})$ such that 
$$
m^*:=\max\limits_{\ph{K}\mathop{\cap} \p\Omega} \phi \, > \, \max\limits_{K\mathop{\cap} \p\Omega} \phi
$$
and seek a contradiction.

Consider the compact set
$$
K^*:=\left\{q\in K \mid \phi(q)\ge m^* \right\}.
$$
By our assumption $K^*\mathop{\cap} \p\Omega = \varnothing$. 

Let $\psi\in P(\conj\Omega)$ be a bounded exhaustion function
negative in $\Omega$ and vanishing on~$\p\Omega$. For $C>0$ large enough.
$\wt{\phi}:=\phi + C\psi < m^*$ on $K^*\Subset\Omega$ and therefore everywhere on~$K$. 
On the other hand, $\wt{\phi}=\phi$ on $\p\Omega$ and hence attains the value $m^*$
on $\ph{K}\mathop{\cap} \p\Omega$, which contradicts the definition of the $P(\conj{\Omega})$-hull
because $\wt{\phi}\in P(\conj{\Omega})$.
\end{proof}

\begin{thm}
\label{mainhull-n}
Let $K$ be a compact subset of a hyperconvex domain~$\Omega$ 
with simple boundary in $\C^n$.
If its polynomially/rationally convex hull~$\wt{K}$ 
is contained in the closure of $\Omega$, 
then it is contained in $\Omega$.
\end{thm}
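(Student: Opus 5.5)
The plan is to pass from the polynomial/rational hull to the $P(\conj{\Omega})$-hull and then use the hyperconvex exhaustion to push everything off $\p\Omega$. Let $\wt{K}$ denote the polynomially or rationally convex hull of $K$, and assume $\wt{K}\subseteq\conj{\Omega}$.

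Since $\Omega$ has simple boundary, Proposition~\ref{hulls-in-pOhulls} gives
$$
\wt{K}\subseteq\ph{K}.
$$
It therefore suffices to show that $\ph{K}\cap\p\Omega=\varnothing$.

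For this I would apply Proposition~\ref{boundary-hull-n}. This is available because $\Omega$ is hyperconvex. It yields
$$
\ph{K}\cap\p\Omega=\phbig{K\cap\p\Omega}\cap\p\Omega .
$$
Since $K\Subset\Omega$, we have $K\cap\p\Omega=\varnothing$, so the right-hand side is the $P(\conj{\Omega})$-hull of the empty set. Interpreting $\max_\varnothing\phi=-\infty$, this hull is empty, because constant functions lie in $P(\conj{\Omega})$.

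To avoid relying on that convention, I would instead rerun the argument of Proposition~\ref{boundary-hull-n} directly. Let $\psi\in P(\conj{\Omega})$ be the bounded exhaustion function, negative in $\Omega$ and zero on $\p\Omega$. Because $K\Subset\Omega$, we have $\max_K\psi<0$. On the other hand, if $q\in\ph{K}\cap\p\Omega$, then $\psi(q)=0>\max_K\psi$, which contradicts the definition of the hull. Hence $\ph{K}\cap\p\Omega=\varnothing$.

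Combining the two steps, $\wt{K}\subseteq\ph{K}\subseteq\Omega$, which proves the theorem.

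The only genuine input is Proposition~\ref{hulls-in-pOhulls}, and with it the Mergelyan-type Proposition~\ref{psh-Mergelyan}, since this is what transfers the polynomial or rational hull into the $P(\conj{\Omega})$-hull. Everything after that is immediate from hyperconvexity, so I do not expect a real obstacle. The one thing to check is that $\psi$, extended by zero to $\p\Omega$, is continuous on $\conj{\Omega}$, so that it really belongs to $P(\conj{\Omega})$; this follows from its being a bounded exhaustion.
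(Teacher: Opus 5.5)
Your proof is correct and follows the paper's argument. Proposition~\ref{hulls-in-pOhulls} places $\wt{K}$ inside $\ph{K}$, and hyperconvexity then shows $\ph{K}\cap\p\Omega=\varnothing$, either through Proposition~\ref{boundary-hull-n} (the paper's route) or through your direct use of the exhaustion function $\psi$, which is that proposition's proof specialised to $K\cap\p\Omega=\varnothing$ and conveniently avoids the convention for the hull of the empty set.
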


\begin{proof}
$\Omega$ has simple boundary, so $\wt{K}\subseteq\ph{K}$ by Proposition~\ref{hulls-in-pOhulls}.
$\Omega$ is hyperconvex, so $\ph{K}\cap\p\Omega\subseteq \ph{K\cap \p\Omega}$  
by Proposition~\ref{boundary-hull-n}. The latter set is empty because $K\Subset\Omega$.
\end{proof}

\subsection{Hulls, discs, and boundaries in~$\C^2$}
\label{subsec-h-d-b-C2}
In this subsection, we denote by $\wt{K}\supseteq K$ any compact `hull' of a compact set $K$
satisfying the `local maximum principle' 
\begin{equation}
\label{local-genhull-phull}
\conj{U} \cap  \wt{K} \subseteq \mathop{\mathcal{O}(\conj{U})\text{-hull}}(\p U\cap \wt{K})
\end{equation}
for any open set $U$ with $U\cap K=\varnothing$, where $\mathcal{O}(\conj{U})$ is the algebra
of functions holomorphic on a neighbourhood of~$\conj{U}$.
This inclusion holds for $\wt{K}=\wh{K}$ by~Theorem~\ref{max-phull},
for $\wt{K}=\ph{K}$ for {\it any\/} $\Omega$ such that $K\subseteq\conj{\Omega}$ by~Theorem~\ref{max-pOhull},
and for $\wt{K}=\rh{K}$ by~Theorem~\ref{max-rhull}. 
The following is our main technical result.

\begin{prop}
\label{maintech}
Let $\Omega$ be a pseudoconvex domain with simple boundary in $\C^2$.
If $\wt{K}\subseteq\conj{\Omega}$, then for every point $p\in X=(\wt{K}-K)\cap\p\Omega$
and every neighbourhood $V\ni p$ there is an analytic disc 
in~$\conj{\Omega}\cap V$ passing through $p$ and with boundary~in~$X$.
\end{prop}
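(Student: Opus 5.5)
The plan is to localise at $p$ in the coordinates of Definition~\ref{simple-boundary}. There $\partial\Omega$ is the graph of $F$ over $B\times I$. The structure of $\wt{K}$ near $p$ is then compared with the hulls of graphs given by Theorem~\ref{Nikolay}. The disc comes from Theorem~\ref{Nikolay}(ii), and the boundary is pinned down by combining Corollary~\ref{common-disc} with Lemma~\ref{Rouche}.

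\emph{Setup.} Choose coordinates $(z,w)$ centred at $p$ with $\Omega\cap(B\times I)=\{v<F(z,u)\}$ and $F(0)=0$. Shrink $B$ to a small ball $B'$ centred at $0$ with boundary sphere $S$. Pick $v_-<\min_{\conj{B'}}F$ so that the box $\conj{B'}\times[v_-,\max F]$ lies in $V$ and is disjoint from $K$; this uses $p\notin K$. Let $\mathfrak{C}=B'\times\R_v$, and let $U=\mathfrak{C}\cap\{v>v_-\}$, cut off above far away. Since $\wt{K}\subseteq\conj{\Omega}$, the set $\conj{U}\cap\wt{K}$ lies under the graph of $F$. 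The local maximum principle~\eqref{local-genhull-phull} gives
$$
p\in \mathop{\mathcal{O}(\conj{U})\text{-hull}}(\p U\cap\wt{K}),
$$
and $\p U\cap\wt{K}$ is contained in $(S\times[v_-,F])\cup(\conj{B'}\times\{v_-\})$.

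\emph{Boundary data.} On $S$ let $g(z,u)$ be the maximum of $v$ over the fibre of $\wt{K}$ in $\{(z,u)\}\times[v_-,F(z,u)]$, with $g=v_-$ if the fibre is empty. This $g$ is upper semicontinuous and satisfies $v_-\le g\le F$. Choose continuous $g_k$ on $S$ with $g\le g_k\le F$ and $g_k\downarrow g$. Then $\p U\cap\wt{K}\subseteq K(v_-,g_k)$, which is polynomially convex by Corollary~\ref{between-graphs}. It is therefore also $\mathcal{O}(\conj{U})$-convex, so $p\in K(v_-,g_k)$, i.e.\ $0\le\wh{g}_k(0)$. By Lemma~\ref{psc-contain}, applied to the pseudoconvex $\Omega_F$, we have $\wh{g}_k\le F$, hence $\wh{g}_k(0)=0$ and $p\in\Gamma(\wh{g}_k)$. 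Thus $\Gamma(\wh g_k)$ is sandwiched under $\Gamma(F)$ and touches it at $p$.

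\emph{Obtaining a single disc.} Theorem~\ref{Nikolay}(ii) gives an analytic disc $\Delta_1\subset\Gamma(\wh{g}_1)\subseteq\conj{\Omega}\cap V$ through $p$ with boundary in $\Gamma(g_1)$. Since $g_k\le g_1$ and $p\in\Gamma(\wh{g}_k)\cap\Gamma(\wh{g}_1)$, Corollary~\ref{common-disc} yields a disc through $p$ with boundary in $\Gamma(g_k)\cap\Gamma(g_1)$. By Lemma~\ref{Rouche}, that disc is contained in the disc through $p$ in $\Gamma(\wh g_1)$, so the two coincide. Hence one disc $\Delta$ through $p$ has $b\Delta\subseteq\bigcap_k\Gamma(g_k)$. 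Applying Corollary~\ref{common-disc} also to the pair $g_1\le F|_S$ gives $b\Delta\subseteq\Gamma(F|_S)$. Therefore every point of $b\Delta$ has $v=g=F>v_-$. Such a point lies in $\wt{K}$, because $g$ is then attained by a point of the fibre of $\wt{K}$. It lies in $\p\Omega$, because it is on the graph of $F$. It is not in $K$, because of the choice of the box. So $b\Delta\subseteq X$.

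\emph{Main obstacle.} The delicate step is the passage from the continuous data $g_k$ to the merely upper semicontinuous $g$. It depends on the uniqueness of the disc through $p$ inside each $\Gamma(\wh g_k)$, supplied by Lemma~\ref{Rouche}. If that uniqueness argument fails, the fallback is a normal-families limit of the discs $\Delta_k$. That route needs a uniform bound on their areas, which I would get from the fact that they are Levi-flat pieces of graphs over $\conj{B'}$.
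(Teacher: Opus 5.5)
Your argument is correct, and its skeleton is the paper's. You localise in a chart from Definition~\ref{simple-boundary}, combine Lemma~\ref{psc-contain}, Corollary~\ref{between-graphs} and the local maximum principle~\eqref{local-genhull-phull} to force $p$ onto $\Gamma(\wh{\phi})$ for side data $\phi$ lying between $\wt{K}$ and $F$, and then read off the boundary of the disc from Corollary~\ref{common-disc}.

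Where you differ is the choice of side data. The paper builds a single continuous $\phi$ that already has the exact contact property $\Gamma(\phi)\cap\Gamma(F|_{\p B})=\wt{K}\cap\Gamma(F|_{\p B})$, namely $\phi=\max\{F-d_{\wt{K}}(z,u,F(z,u)),v_*\}$. It then applies Corollary~\ref{common-disc} once, to $\phi\le\frac12(\phi+F|_{\p B})$. Your upper envelope $g$ has the same contact property but is only upper semicontinuous. So you must approximate it by continuous $g_k\downarrow g$ and then use that the disc through $p$ in $\Gamma(\wh{g}_1)$ is unique. The proof of Lemma~\ref{Rouche} does supply this uniqueness: a disc in $\{v\le\wh{g}_1\}$ through $p$ lies in any disc of Theorem~\ref{Nikolay}(ii) through $p$, and a properly embedded disc in $\mathfrak{C}$ contained in another one coincides with it. Hence your limiting step works and the normal-families fallback is unnecessary. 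The paper's distance-function trick simply avoids both the limit and the uniqueness statement.

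Using $F|_S$ itself as the upper datum instead of the paper's average is harmless. You should, however, say why $p\in\Gamma(\wh{F|_S})$: we have $0=\wh{g}_1(0)\le\wh{F|_S}(0)\le F(0)=0$ by Corollary~\ref{hull-monoton} and Lemma~\ref{psc-contain}.

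Also, the top of $U$ should be a height in $I$ above $\max_{\conj{B'}}F$ rather than ``far away'', so that $\conj{U}$ stays inside the chart. Otherwise neither $U\cap K=\varnothing$ nor $\conj{U}\cap\wt{K}\subseteq\{v\le F\}$ is guaranteed.
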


\begin{proof}
We choose a coordinate system $(z,w)$ at $p$ as in Definition~\ref{simple-boundary}
and choose $U$ so that $\conj{U}\cap K=\varnothing$ and $U\subseteq V$.  
Note that the domain $\Omega_F=\Omega\cap U$ is pseudoconvex.

Let $\phi:\p B\to I=(v_-,v_+)$ be any continuous function  such that
\begin{enumerate}
\item $\phi\le F|_{\p B}$;
\item $\wt{K}\cap (\p B\times I) \subseteq \{(z,w)\in\conj{U} \mid (z,u)\in\p B, v\le \phi(z,u)\}$;
\item $\Gamma(\phi)\cap\Gamma(F|_{\p B})= \wt{K}\cap\Gamma(F|_{\p B})$.
\end{enumerate}
Functions with those properties exist. For instance, one may set
$$
\phi(z,u):= \max \bigl\{F(z,u) - d_{\wt{K}}(z,u,F(z,u)), v_*\bigr\},
$$
where $d_{\wt{K}}$ is the distance function~\eqref{dist-def} 
and $v_*\in (v_-, \min\limits_{\p B} F)$.

Let now $\wh{\phi}:\conj{B}\to\R_v$ be the solution of the Dirichlet problem 
from Theorem~\ref{Nikolay}.
By property (1) and Lemma~\ref{psc-contain} we get 
$$
\wh{\phi}\le F.
$$ 
By property (2) the intersection of $\wt{K}$ with the 
boundary of $U=B\times I$
is contained in the (boundary of the) compact set
$$
K(v_-,\phi):=\{(z,w)\in\C^2\mid (z,u)\in\conj{B}, v\in [v_-,\wh{\phi}(z,u)]\}.
$$
This set is polynomially convex by Corollary~\ref{between-graphs} 
and {\it a fortiori\/} it is $\mathcal{O}(\conj{U})$-convex.
Since $U\cap K=\varnothing$, 
it follows from inclusion~\eqref{local-genhull-phull} that 
$$
U\cap \wt{K} \subseteq K(v_-,\phi).
$$
In other words, $\Gamma(\wh{\phi})$ lies below $\Gamma(F)$ and above $\wt{K}$ in $U=B\times I$.
In~particular, $p\in \Gamma(\wh{\phi})$ because $p\in \wt{K}\cap\Gamma(F)$. 

The function $\psi:= \frac{1}{2}\left(\phi + F|_{\p B}\right)$ has properties (1) and (2)
and hence $p\in \Gamma(\wh{\psi})$ by the same argument. 
Since $\phi\le\psi$ by (1),
we conclude from Corollary~\ref{common-disc} that there is an analytic disc
$\Delta\subset\Gamma(\wh\phi)\cap \Gamma(\wh\psi)\subset\conj{\Omega}$ through $p$ 
with boundary in $\Gamma(\phi)\cap \Gamma(\psi)$.
By property (3) of $\phi$ and the definition of $\psi$, the latter intersection 
is contained in $\wt{K}\cap \Gamma(F)\subseteq (\wt{K}-K)\cap \p\Omega=X$.
\end{proof}

The assertion of the proposition can be strengthened to show that the analytic
discs are in fact contained in~$X$.

\begin{thm}
\label{foliation}
Let $\Omega\subset\C^2$ be a pseudoconvex domain with simple boundary
and $K\Subset\C^2$ a compact subset such that $\wt{K}\subseteq\conj{\Omega}$.
Then the set $X=(\wt{K}-K)\cap\p\Omega$ is a union of analytic discs.
\end{thm}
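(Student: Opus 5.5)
Fix $p\in X$. The plan is to show that the disc $\Delta$ produced by Proposition~\ref{maintech} lies not only in $\conj{\Omega}$ but in $\wt{K}\cap\p\Omega$, and avoids $K$. Then $\Delta\subseteq X$, and $X$ is the union of such discs.

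First, $\Delta$ lies in $\p\Omega$. The disc $\Delta$ passes through $p\in\p\Omega$ and is contained in $\conj{\Omega}$. By Proposition~\ref{disc-touching-bd} (Kerzman--Rosay), it lies either entirely in $\Omega$ or entirely in $\p\Omega$. Since $p\in\Delta\cap\p\Omega$, we get $\Delta\subseteq\p\Omega$.

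Next, $\Delta$ lies in $\wt{K}$. The disc has boundary $b\Delta\subseteq X\subseteq\wt{K}$, and $\conj{\Delta}\subseteq\conj{U}\subseteq V$, where $\conj{U}\cap K=\varnothing$. By Lemma~\ref{psh-on-discs}, $\Delta\subseteq\wh{b\Delta}$. To conclude $\Delta\subseteq\wt{K}$ we need the hull to absorb analytic discs whose boundaries lie in it. For $\wh{K}$ and $\rh{K}$ this follows from the maximum principle: for a polynomial or rational $R$ regular on $K$, a disc near $p$ in $V$ avoids the polar set once $V$ is small, since the poles miss $\wt{K}\ni p$. So $|R|\le\max_{b\Delta}|R|\le\max_{\wt{K}}|R|=\max_K|R|$ on $\Delta$. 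For $\ph{K}$ one uses Corollary~\ref{psh-on-bd-disc}: functions in $P(\conj{\Omega})$ are subharmonic on $\Delta\subseteq\p\Omega$, so the same maximum argument applies. For a general hull satisfying only \eqref{local-genhull-phull}, I would argue more carefully. Approach $\Delta$ by its interior slices. For $\Delta_r=\iota(\{|\zeta|<r\})$, the boundary circles tend to $b\Delta\subseteq\wt{K}$ as $r\to1$, but they are not themselves in $\wt{K}$. This is the obstacle.

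To get past it, I would try to bypass general hulls and reuse the construction in the proof of Proposition~\ref{maintech}. There $\Delta\subseteq\Gamma(\wh\phi)$ and $U\cap\wt{K}\subseteq K(v_-,\phi)$, but the inclusion goes the wrong way. So I would instead exploit freedom in $\phi$. Replace $\phi$ by a family $\phi_k$ satisfying (1)--(3) with $\phi_k\uparrow$ the upper envelope, tending to $F$ exactly on $\wt{K}\cap\Gamma(F|_{\p B})$ and staying below elsewhere. Every point of $\Gamma(\wh{\phi}_k)$ through $p$ gives discs; by Corollary~\ref{common-disc} all these discs through $p$ coincide with a single $\Delta$. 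Now suppose some $q\in\Delta$ is not in $\wt{K}$. Then a small neighbourhood $W$ of $q$ misses $\wt{K}$, so the argument with $K(v_-,\phi)$ lets us push $\phi$ down near the boundary points that matter without affecting the hull. The aim is to contradict $p\in\Gamma(\wh\phi)$ for a suitably lowered $\phi$. I expect this to be the main obstacle. The cleanest route is probably to show directly that $\wt{K}\cap\Gamma(F)$ is a union of discs via a maximality argument. Take the largest relatively closed subset $Y\subseteq X$ foliated by such discs, and derive that $X\setminus Y$ would violate \eqref{local-genhull-phull} near a point of $X\setminus Y$ that is extremal for a strictly psh function such as $|z|^2+u^2$, in the spirit of Remark~\ref{strict-max}.

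Finally, $\Delta\cap K=\varnothing$ because $\Delta\subseteq U$ and $\conj{U}\cap K=\varnothing$. Hence $\Delta\subseteq(\wt{K}-K)\cap\p\Omega=X$.
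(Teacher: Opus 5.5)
Your treatment of $\wt{K}=\wh{K}$ and $\wt{K}=\ph{K}$ is correct and matches the paper. Proposition~\ref{disc-touching-bd} puts $\Delta$ into $\p\Omega$, and then Lemma~\ref{psh-on-discs}, respectively Corollary~\ref{psh-on-bd-disc}, gives $\Delta\subseteq\wt{K}$.

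\textbf{The gap.} The rationally convex case, and more generally any hull satisfying only \eqref{local-genhull-phull}, is not proved. This is exactly the case where the paper needs extra work.

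Your rational argument fails. Once $V$ is fixed, the disc $\Delta$ is fixed, but no single $V$ keeps $\Delta$ off the polar sets of \emph{all} rational functions regular on $K$. Every point $q\notin\rh{K}$ is a pole of such a function: take $1/Q$ with $Q(q)=0$ and $Q\neq 0$ on $K$. So requiring $\Delta$ to avoid all polar sets is equivalent to $\Delta\subseteq\rh{K}$, which is what you are trying to prove. In fact the principle ``a disc with boundary in a rationally convex set lies in that set'' is false: take the unit disc and the unit circle in $\C$. Your later ideas (the monotone family $\phi_k$, lowering $\phi$, a maximal foliated subset $Y$) are not carried out, and you flag the obstacle yourself.

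\textbf{The missing idea.} The paper does not try to make the hull absorb $\Delta$. Instead it applies Proposition~\ref{maintech} at \emph{every} point of $E:=\Delta\cap X$, which is nonempty and closed in $\Delta$.

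At $\zeta_1\in E$, Proposition~\ref{maintech} gives a small disc $\Delta''\ni\zeta_1$ in $\conj{\Omega}\cap U$ with $b\Delta''\subseteq X$. Since $\Delta\subseteq\p\Omega$, the local piece $\Sigma=\{f=0\}$ of $\Delta$ through $\zeta_1$ is pushed out of $\conj{\Omega}$ by the shifts $s_t$, $t>0$. The Rouch\'e argument of Lemma~\ref{Rouche} then forces $\Delta''\subseteq\Sigma\subseteq\Delta$. Hence $\zeta_1$ has small neighbourhoods in $\Delta$ whose relative boundary lies in $E$.

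This alone does not make $E$ open, so the paper finishes with a nearest-point argument. Suppose $E\neq\Delta$. Pick $\zeta_0\in\Delta-E$ and a hyperbolically nearest point $\zeta_1\in E$. Let $\Delta'$ be the hyperbolic disc about $\zeta_1$ of radius $\dist_{\mathbb{D}}(\zeta_0,\zeta_1)$. It is relatively compact in $\Delta$, so for small $U$ the relevant component $\Sigma$ of $\Delta'\cap U$ is a closed curve in $U$. The geodesic from $\zeta_1$ to $\zeta_0$ lies in $\Delta-E$ apart from $\zeta_1$. Yet it must leave $\Delta''\subseteq\Delta'$ through a point of $E$. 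This contradiction gives $E=\Delta$, that is, $\Delta\subseteq X$.
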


\begin{proof}
Let $p$ be any point in $X$ and $V\ni p$ a neighbourhood with $\conj{V}\cap K=\varnothing$.
Let us prove that the analytic disc $\Delta \ni p$ constructed in Proposition~\ref{maintech} 
is contained in~$X$. 

First, note that $\Delta$ is contained in $\conj{\Omega}$ and intersects $\p\Omega$. 
Hence, it must be entirely contained in $\p\Omega$ by Proposition~\ref{disc-touching-bd}.

If $\wt{K}=\wh{K}$ is the polynomially convex hull,
then analytic discs with boundary in $X\subset\wh{K}$ 
are contained in $\wh{K}$ by Lemma~\ref{psh-on-discs},
which completes the proof. A very similar argument 
using Corollary~\ref{psh-on-bd-disc} can be 
given for $\wt{K}=\ph{K}$.

In general, e.g., for rationally convex hulls, 
additional arguments are needed. Define $E:=\Delta\cap X$. 
This subset is closed in $\Delta$ because $\conj{\Delta}\cap K=\varnothing$.
Note that $p\in E$ and so $E\ne\varnothing$. 
Let us assume that $E\ne\Delta$ and seek a contradiction.

Take a point $\zeta_0\in\Delta-E$
and let $\zeta_1\in E$ be a closest point to $\zeta_0$
in the closed subset $E$ with respect to the hyperbolic 
metric on $\Delta=\iota(\mathbb{D})$.
The geodesic segment $\gamma=[\zeta_0,\zeta_1]$ lies 
in $\Delta-E$ except for the endpoint~$\zeta_1$. 
In particular, if $\Delta'=\iota(\mathbb{D}')$ 
is the hyperbolic disc centred at $\zeta_1$ 
of radius $\dist_\mathbb{D}(\zeta_1,\zeta_0)$,
then every neighbourhood of $\zeta_1$ in $\Delta'$ 
has a boundary point on~$\gamma$ and hence in~$\Delta-E$.

Let $U\ni\zeta_1$ be a neighbourhood as in Definition~\ref{simple-boundary}  
so small that $U\cap \iota(\p\mathbb{D}')=\varnothing$.
Denote by $\Sigma\subset U$ the (properly embedded) 
connected component of $\Delta'\cap U$ containing $\zeta_1$.

By Proposition~\ref{maintech} applied to $\zeta_1\in X$ and $U$,
there exists an analytic disc $\Delta''\subset \conj{\Omega}\cap U$
through $\zeta_1$ with boundary in~$X$. 
As in the proof of Lemma~\ref{Rouche}, we conclude
that $\Delta''\subseteq\Sigma$. 
Namely, $\Sigma=\{f=0\}$ for a holomorphic function $f\in\OO(B\times\R_v)$. 
If $s_t(z,w)=(z,w+it)$, then $s_t(\Sigma)$ 
lies outside of $\conj{\Omega}\cap U$ for~$t>0$
because $\Sigma\subset\p\Omega$.
Thus $f$ has a zero on $\Delta''$ 
and its shifts $f\circ s_{-t}$, $t>0$, do not.
By Rouch\'e's theorem, 
$f$ must vanish on $\Delta''$ identically.
 
Hence, $\Delta''\subseteq\Delta'$ defines a neighbourhood
of $\zeta_1$ in $\Delta'$ with boundary entirely
contained in $E=\Delta\cap X$. This contradicts
our choice of $\zeta_1$ and $\Delta'$, which proves
the theorem.
\end{proof}

\begin{rem}
\label{Catlin}
Catlin~\cite[Proposition 3.1.12]{Ca} proved the theorem 
for a smoothly bounded domain $\Omega$ and $\wt{K}=\ph{K}$
by a completely different method. 
He also gave an example~\cite[Theorem 3.1.14]{Ca}
showing that the result does not extend to $\C^n$ for~$n\ge 3$.
The idea is that using \cite[Theorem 1.3.8]{Sto} 
one can construct a smoothly bounded pseudoconvex domain in $\C^{n+1}$
such that its boundary contains any given polynomially
convex set $L\Subset\C^n\subset\C^{n+1}$ and is
strictly pseudoconvex on the complement of~$L$. 
Then one may take $L=\wh{K}$ for a set $K$ 
in $\C^n$ without analytic discs 
in the added hull $\wh{K}-K$, see~\cite{St1} or~\cite[\S 1.7]{Sto}.
\end{rem}

\subsection{Hulls and pseudoconvex domains in $\C^2$}
\label{subsec-h-and-c-C2}
The following results extend Proposition~\ref{boundary-hull-n} and 
Theorem~\ref{mainhull-n} 
to arbitrary pseudoconvex domains with simple boundary in~$\C^2$.
It is an open problem whether every such (bounded) domain
is in fact hyperconvex, see~\cite[Problem 2]{Bl} 
and~\cite[Problem 1]{C}.

\begin{prop}
\label{boundary-hull}
If $\Omega$ is a pseudoconvex domain with simple boundary in $\C^2$,
then
$$
\ph{K}\mathop{\cap} \p\Omega = \phbig{K\mathop{\cap} \p\Omega} \mathop{\cap} \p\Omega
$$
for all compact subsets $K\Subset\conj{\Omega}$.
\end{prop}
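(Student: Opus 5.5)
The inclusion $\supseteq$ is obvious, since $K\cap\p\Omega\subseteq K$ implies $\phbig{K\cap\p\Omega}\subseteq\ph{K}$. For $\subseteq$ I argue by contradiction, as in Proposition~\ref{boundary-hull-n}. The missing hyperconvex exhaustion function is replaced by the disc structure of Theorem~\ref{foliation} together with a strictly psh perturbation.

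\emph{Setting up the extremal point.} Suppose some $\phi\in P(\conj{\Omega})$ satisfies
$$
\max\limits_{\ph{K}\cap\p\Omega}\phi \;>\; \max\limits_{K\cap\p\Omega}\phi .
$$
Write $\wt{K}:=\ph{K}$, a compact set. Replace $\phi$ by $\phi_\delta:=\phi+\delta|z|^2$, which is still in $P(\conj{\Omega})$ and is strictly psh in $\Omega$. For $\delta>0$ small the strict inequality above still holds, because $|z|^2$ is bounded on the compact set $\wt{K}$. Set $m^*:=\max_{\wt{K}\cap\p\Omega}\phi_\delta$ and pick $p\in\wt{K}\cap\p\Omega$ with $\phi_\delta(p)=m^*$. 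Then $p\notin K$, since otherwise $p\in K\cap\p\Omega$ and $\phi_\delta(p)$ would be at most the smaller maximum. So
$$
p\in X:=(\wt{K}-K)\cap\p\Omega .
$$

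\emph{Applying the disc results.} The hull $\wt{K}=\ph{K}$ satisfies the local maximum principle~\eqref{local-genhull-phull} for every $\Omega$ with $K\subseteq\conj{\Omega}$, by Theorem~\ref{max-pOhull}, and $\wt{K}\subseteq\conj{\Omega}$ holds by definition. Theorem~\ref{foliation} (and Proposition~\ref{maintech} behind it) was stated for $K\Subset\Omega$. However, its proof only uses neighbourhoods $U$ of points of $X$ with $\conj{U}\cap K=\varnothing$, together with pseudoconvexity and the simple boundary of $\Omega$ near those points. It therefore applies verbatim when $K$ meets $\p\Omega$. The main thing to check is exactly this: that no step silently used $K\cap\p\Omega=\varnothing$. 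The candidates are the choice of the auxiliary function $\phi$ in Proposition~\ref{maintech}, which involves $\wt{K}$ only inside $U$, and Proposition~\ref{disc-touching-bd}. Both are local at $p$, so they survive. The conclusion is an analytic disc $\Delta\ni p$ with $\Delta\subseteq X\subseteq\wt{K}\cap\p\Omega$. For $\wt{K}=\ph{K}$ the simpler argument via Corollary~\ref{psh-on-bd-disc} already suffices, and shrinking $V$ we may assume $\Delta$ lies in one coordinate chart of Definition~\ref{simple-boundary}.

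\emph{Contradiction.} By Corollary~\ref{psh-on-bd-disc}, the restriction of $\phi$ to $\Delta$ is subharmonic. Hence the restriction of $\phi_\delta$ to $\Delta$ is \emph{strictly} subharmonic, since $|z|^2$ restricted to a holomorphic disc is strictly subharmonic. Because $\Delta\subseteq\wt{K}\cap\p\Omega$, we have $\phi_\delta\le m^*$ on $\Delta$, with equality at the interior point $p$. A strictly subharmonic function on the unit disc cannot attain an interior maximum, so this is impossible. Hence no such $\phi$ exists, which proves $\subseteq$.
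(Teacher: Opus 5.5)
Your proof is correct and is essentially the paper's argument: both rest on Theorem~\ref{foliation} for $\wt{K}=\ph{K}$, the subharmonicity of $\phi$ on boundary discs (Corollary~\ref{psh-on-bd-disc}) and the strict plurisubharmonicity of $|z|^2$. The one difference is bookkeeping: the paper keeps $\phi$ fixed, shows that the maximum set $E$ is a compact union of analytic discs and then maximises $|z|^2$ on $E$, whereas you add $\delta|z|^2$ to $\phi$ at the outset, which merges those two steps. Your concern about the hypothesis $K\Subset\Omega$ is also unnecessary: Theorem~\ref{foliation} and Proposition~\ref{maintech} are already stated for arbitrary compact $K\Subset\C^2$ with $\wt{K}\subseteq\conj{\Omega}$, so they apply directly when $K$ meets $\p\Omega$.
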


\begin{proof}
The inclusion $\supseteq$ is obvious. The proof of $\subseteq$ 
is by contradiction.
Suppose that there exists $\phi\in P(\conj{\Omega})$ such that 
$$
m^*:=\max\limits_{\ph{K}\mathop{\cap} \p\Omega} \phi \, > \, \max\limits_{K\mathop{\cap} \p\Omega} \phi
$$
and let
$$
E:= \{ q\in \ph{K}\mathop{\cap} \p\Omega \mid \phi(q)=m^*\}.
$$
By Theorem~\ref{foliation} for $\wt{K}=\ph{K}$, there is an analytic disc 
in $(\ph{K}-K)\mathop{\cap}\p\Omega$ through every point of $E$. 
The function $\phi$ is sh on this disc by Corollary~\ref{psh-on-bd-disc}
and attains its maximum equal to $m^*$ there. Hence, $\phi$
is identically equal to $m^*$ on the disc and so the disc is 
contained in $E$ by the definition of $E$.

We have thus shown that $E$ is a compact subset of $\C^2$ 
that is a union of analytic discs.
Such a compact set in $\C^n$ must be empty. For otherwise 
the {\it strictly\/} psh function $\Phi(z)=|z|^2$ on $\C^n$ 
will attain its maximum on~$E$ at a point $p_0\in E$ 
and therefore on the disc through that point, 
which is impossible.
\end{proof}

\begin{thm}
\label{mainhull}
Let $K$ be a compact subset of a pseudoconvex domain~$\Omega$ 
with simple boundary in $\C^2$.
If its polynomially/rationally convex hull~$\wt{K}$ 
is contained in the closure of $\Omega$, 
then it is contained in $\Omega$.
\end{thm}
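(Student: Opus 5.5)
The argument will mirror the proof of Theorem~\ref{mainhull-n}, with Proposition~\ref{boundary-hull} taking the place of Proposition~\ref{boundary-hull-n}. Let $\wt{K}$ denote either $\wh{K}$ or $\rh{K}$ and assume $\wt{K}\subseteq\conj{\Omega}$.

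First, since $\Omega$ has simple boundary and $\wt{K}\subseteq\conj{\Omega}$, Proposition~\ref{hulls-in-pOhulls} gives $\wt{K}\subseteq\ph{K}$. That proposition rests on the Mergelyan property (Proposition~\ref{psh-Mergelyan}) together with the local maximum principles Theorem~\ref{max-phull} and Theorem~\ref{max-rhull}, and it applies verbatim to both hulls. Intersecting with the boundary, we obtain
$$
\wt{K}\cap\p\Omega\subseteq \ph{K}\cap\p\Omega .
$$

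Next, $\Omega$ is a pseudoconvex domain with simple boundary in $\C^2$, so Proposition~\ref{boundary-hull} applies to the compact set $K\Subset\Omega\subset\conj{\Omega}$. It gives
$$
\ph{K}\cap\p\Omega=\phbig{K\cap\p\Omega}\cap\p\Omega .
$$
Since $K\Subset\Omega$, we have $K\cap\p\Omega=\varnothing$. The $P(\conj{\Omega})$-hull of the empty set is empty: every point fails the defining inequality for a constant function, because the maximum over the empty set is $-\infty$. Hence $\ph{K}\cap\p\Omega=\varnothing$, so $\wt{K}\cap\p\Omega=\varnothing$. Combined with $\wt{K}\subseteq\conj{\Omega}$, this yields $\wt{K}\subseteq\Omega$.

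There is no real obstacle left at this stage. All the substantial work sits in Proposition~\ref{boundary-hull}, which in turn rests on the foliation result Theorem~\ref{foliation} and on Sh\-cherbina's Theorem~\ref{Nikolay}. One point needs checking: Proposition~\ref{boundary-hull} uses Theorem~\ref{foliation} with $\wt{K}=\ph{K}$, and this requires that the $P(\conj{\Omega})$-hull satisfy the local maximum principle~\eqref{local-genhull-phull}. That is guaranteed by Theorem~\ref{max-pOhull}, since $K\subseteq\conj{\Omega}$. Alternatively, the empty-set convention can be sidestepped by arguing directly from Theorem~\ref{foliation} applied to $\wt{K}$. The set $X=\wt{K}\cap\p\Omega=(\wt{K}-K)\cap\p\Omega$ is then a compact union of analytic discs, so the strictly psh function $|z|^2$ would attain its maximum on $X$ at an interior point of one of these discs. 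Lemma~\ref{psh-on-discs} rules this out, so $X$ is empty.
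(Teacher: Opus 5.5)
Your proposal is correct and follows the paper's proof: the paper also combines Proposition~\ref{hulls-in-pOhulls} with Proposition~\ref{boundary-hull} and uses $K\cap\p\Omega=\varnothing$. Your alternative argument is also close to the paper's second argument, except that the paper applies Proposition~\ref{maintech} directly instead of the stronger Theorem~\ref{foliation}; discs through every point of $X$ with boundary in $X$ already suffice for the $|z|^2$ maximum argument via Lemma~\ref{psh-on-discs}.
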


\begin{proof}
$\wt{K}\subseteq\ph{K}$ by Proposition~\ref{hulls-in-pOhulls} 
and the result follows from Proposition~\ref{boundary-hull}
because $K\cap\p\Omega=\varnothing$.

Alternatively, we may apply Proposition~\ref{maintech} directly.
Since $K\Subset\Omega$, it follows that $X=\wt{K}\cap\p\Omega$ 
is a compact subset of $\C^2$ such that there is an analytic disc 
with boundary in $X$ through every point in~$X$.

A compact subset in $\C^n$ with the latter property must be empty. 
For otherwise the strictly psh function $\Phi(z)=|z|^2$ on $\C^n$
will attain its maximum on $X$ at a point $p_0\in X$. 
In particular,
$$
\Phi(p_0)\ge \max_{b\Delta} \Phi
$$
for an analytic disc $\Delta$ through $p_0$ with $b\Delta\subseteq X$, 
which contradicts the second part of Lemma~\ref{psh-on-discs}.

This shows that $\wt{K}\cap\p\Omega=\varnothing$ and the theorem is proven.
\end{proof}

\end{document}